
\documentclass{article}

\usepackage{alltt}
\usepackage{amsfonts}
\usepackage{amsmath}
\usepackage{amssymb}
\usepackage{amsthm}
\usepackage{booktabs}
\usepackage{caption}
\usepackage{subcaption}
\usepackage{enumitem}
\usepackage{fancyhdr}
\usepackage{fullpage}
\usepackage{graphicx}
\usepackage{mathdots}
\usepackage{mathtools}
\usepackage{microtype}
\usepackage{multirow}
\usepackage{pdflscape}
\usepackage{pgfplots}
\usepackage{siunitx}
\usepackage{slashed}
\usepackage{tabularx}
\usepackage{tikz}
\usepackage{tkz-euclide}
\usepackage[normalem]{ulem}
\usepackage[all]{xy}
\usepackage{imakeidx}
\usepackage{verbatim}
\usepackage[
backend=bibtex,
style=numeric,
]{biblatex}
\addbibresource{fullbibliography.bib}
\usepackage{url}
\usepackage{breakurl}
\usepackage[breaklinks]{hyperref}

\usetikzlibrary{arrows}
\usetikzlibrary{arrows.meta}
\usetikzlibrary{decorations.markings}
\usetikzlibrary{decorations.pathmorphing}
\usetikzlibrary{positioning}
\usetikzlibrary{fadings}
\usetikzlibrary{intersections}
\usetikzlibrary{cd}

\pgfarrowsdeclarecombine{twolatex'}{twolatex'}{latex'}{latex'}{latex'}{latex'}
\tikzset{->/.style = {decoration={markings,
			mark=at position 1 with {\arrow[scale=2]{latex'}}},
		postaction={decorate}}}
\tikzset{<-/.style = {decoration={markings,
			mark=at position 0 with {\arrowreversed[scale=2]{latex'}}},
		postaction={decorate}}}
\tikzset{<->/.style = {decoration={markings,
			mark=at position 0 with {\arrowreversed[scale=2]{latex'}},
			mark=at position 1 with {\arrow[scale=2]{latex'}}},
		postaction={decorate}}}
\tikzset{->-/.style = {decoration={markings,
			mark=at position #1 with {\arrow[scale=2]{latex'}}},
		postaction={decorate}}}
\tikzset{-<-/.style = {decoration={markings,
			mark=at position #1 with {\arrowreversed[scale=2]{latex'}}},
		postaction={decorate}}}
\tikzset{->>/.style = {decoration={markings,
			mark=at position 1 with {\arrow[scale=2]{latex'}}},
		postaction={decorate}}}
\tikzset{<<-/.style = {decoration={markings,
			mark=at position 0 with {\arrowreversed[scale=2]{twolatex'}}},
		postaction={decorate}}}
\tikzset{<<->>/.style = {decoration={markings,
			mark=at position 0 with {\arrowreversed[scale=2]{twolatex'}},
			mark=at position 1 with {\arrow[scale=2]{twolatex'}}},
		postaction={decorate}}}
\tikzset{->>-/.style = {decoration={markings,
			mark=at position #1 with {\arrow[scale=2]{twolatex'}}},
		postaction={decorate}}}
\tikzset{-<<-/.style = {decoration={markings,
			mark=at position #1 with {\arrowreversed[scale=2]{twolatex'}}},
		postaction={decorate}}}

\tikzset{circ/.style = {fill, circle, inner sep = 0, minimum size = 3}}
\tikzset{scirc/.style = {fill, circle, inner sep = 0, minimum size = 1.5}}
\tikzset{mstate/.style={circle, draw, blue, text=black, minimum width=0.7cm}}

\tikzset{eqpic/.style={baseline={([yshift=-.5ex]current bounding box.center)}}}
\tikzset{commutative diagrams/.cd,cdmap/.style={/tikz/column 1/.append style={anchor=base east},/tikz/column 2/.append style={anchor=base west},row sep=tiny}}

\theoremstyle{definition}

\newtheorem*{notation}{Notation}

\newtheorem{nthm}{Theorem}[section]

\newtheorem{nlemma}[nthm]{Lemma}
\newtheorem{nprop}[nthm]{Proposition}
\newtheorem{assumption}[nthm]{Assumption}
\newtheorem{assumptions}[nthm]{Assumptions}

\newtheorem{ncon}[nthm]{Conjecture}
\newtheorem{defi}[nthm]{Definition}
\newtheorem{eg}[nthm]{Example}

\newtheorem{remark}[nthm]{Remark}

\newcommand{\al}{\alpha}
\newcommand{\ga}{\gamma}

\newcommand{\OC}{\mathcal{OC}}
\newcommand{\MC}{\mathcal{C}}
\newcommand{\MD}{\mathcal{D}}
\newcommand{\MM}{\mathcal{M}}
\newcommand{\MK}{\mathcal{K}}

\newcommand{\E}{\mathcal{E}}
\newcommand{\V}{\mathcal{V}}
\newcommand{\W}{\mathcal{W}}
\newcommand{\F}{\mathcal{F}}

\newcommand{\qb}{\mathfrak{q}^{b,\ga}}
\newcommand{\mb}{\mathfrak{m}^{b,\ga}}
\newcommand{\m}{\mathfrak{m}}

\newcommand{\q}{\mathfrak{q}}
\newcommand{\CC}{\mathbb{C}}
\newcommand{\KK}{\mathbb{K}}
\newcommand{\QQ}{\mathbb{Q}}
\newcommand{\RR}{\mathbb{R}}
\newcommand{\ZZ}{\mathbb{Z}}

\newcommand{\WE}{\widetilde{\E}}

\newcommand{\DX}{QH^*(X)[[u]]}
\newcommand{\YY}{Y}

\newcommand{\WV}{\widetilde{\V}}
\newcommand{\WW}{\widetilde{\W}}
\newcommand{\WR}{\widetilde{R}}
\newcommand{\bfe}{\mathbf{e}}

\let\stdsection\section
\renewcommand\section{\newpage\stdsection}

\def\st{\bgroup \ULdepth=-.55ex \ULset}

\begin{document}
	
	\title{Open Gromov-Witten invariants from the Fukaya category}
	
	\date{\today}
	
	\author{Kai Hugtenburg}
	
	\maketitle
	\begin{abstract}
		This paper proposes a framework to show that the Fukaya category of a symplectic manifold $X$ determines the open Gromov-Witten invariants of Lagrangians $L \subset X$. We associate to an object in an $A_\infty$-category an extension of the negative cyclic homology, called \emph{relative cyclic homology}. We extend the Getzler-Gauss-Manin connection to relative cyclic homology. Then, we construct (under simplifying technical assumptions) a relative cyclic open-closed map, which maps the relative cyclic homology of a Lagrangian $L$ in the Fukaya category of a symplectic manifold $X$ to the $S^1$-equivariant relative quantum homology of $(X,L)$. Relative quantum homology is the dual to the relative quantum cohomology constructed by Solomon-Tukachinsky. This is an extension of quantum cohomology, and comes equipped with a connection extending the quantum connection. We prove that the relative open-closed map respects connections. As an application of this framework, we show, assuming a construction of the relative cyclic open-closed map in a broader technical setup, that the Fukaya category of a Calabi-Yau variety determines the open Gromov-Witten invariants with one interior marked point for any null-homologous Lagrangian brane. This in particular includes the open Gromov-Witten invariants of the real locus of the quintic threefold considered in \cite{PSW}. 
	\end{abstract}
\tableofcontents
\section{Introduction}
Classical enumerative mirror symmetry for the quintic threefold states (see \cite{Can}) that the closed Gromov-Witten invariants of the quintic threefold, $X$, can be computed from the period integrals of a mirror family $Y$. Morrison \cite{Mor} rephrased enumerative mirror symmetry as an isomorphism of variations of Hodge structures (VHS) associated to $X$ and $Y$. On $Y$ the VHS is given by the hodge structure on $H^*(Y)$, the connection is the Gauss-Manin connection. The VHS associated to $X$ is given by quantum cohomology, equipped with the quantum connection. Morrison furthermore showed how a splitting of the Hodge filtration of a VHS, arising from the monodromy weight filtration, gives rise to enumerative invariants. For $X$, these are the genus $0$ Gromov-Witten invariants.

In his ICM address, Kontsevich conjectured a homological mirror symmetry, which in the Calabi-Yau setting states that for a symplectic space $X$ there exists a mirror (complex) space $Y$ such that there is a quasi-equivalence: \begin{equation}
	D^\pi Fuk(X) \cong D^b Coh(Y).
\end{equation}
Kontsevich moreover conjectured that this homological mirror symmetry implies enumerative mirror symmetry.

For Calabi-Yau hypersurfaces in projective space, Sheridan \cite{She12} proves homological mirror symmetry. Ganatra-Perutz-Sheridan \cite{GPS} show that for certain Calabi-Yaus, whose VHS is of Hodge-Tate type, the Gromov-Witten invariants predicted in \cite{Can} are indeed extractable from the Fukaya category. The main tool here is the negative cyclic open closed map $\OC^-$, constructed by \cite{FOOO} and \cite{Ga19}, which maps the VHS associated to the Fukaya category, $HC_*^-(Fuk(X))$, to the quantum cohomology. The work by Ganatra-Perutz-Sheridan relies on work they announced, \cite{GPS2}, which shows that $\OC^-$ indeed respects the structure of the VHS.

Homological mirror symmetry is understood to apply in more general settings, such as when $X$ is Fano. In this case the mirror $Y$ is a Landau-Ginzburg model and comes equipped with a superpotential $W: Y \rightarrow \CC$. Barannikov \cite{Bar} defines the appropriate analogue of a VHS in the Fano setting: a variation of semi-infinite Hodge structures (VSHS), also known as a formal TP-structure. To homogenise notation, we include the definition:
\begin{defi}[{Formal TEP-structure, see \cite{Her}}]\text{ }
	\label{formal TEP structures defi}
	Let $\mathbb{K}$ be a field of characteristic 0. Let $R$ be a $\ZZ/2$-graded commutative $\KK$-algebra. Let $\mathcal{E}$ be a free and finitely-generated $\ZZ/2$-graded $R[[u]]$-module, with $u$ of even degree.
	\begin{enumerate}
		\item A formal T-structure is a connection $\nabla: Der_{\KK}R \otimes \mathcal{E} \rightarrow u^{-1} \mathcal{E}$, which is flat and of even degree.
		\item A formal TE-structure is a formal T-structure together with an extension of the connection to a flat connection $\nabla: Der_{\KK}(R[[u]]) \otimes \mathcal{E} \rightarrow u^{-2} \mathcal{E}$.
		\item A formal TEP-structure is a formal T-structure equipped with a polarisation, i.e.\ a covariantly constant pairing $$( \cdot, \cdot )_{\E}: \mathcal{E} \otimes \mathcal{E} \rightarrow R[[u]],$$ which is $R$-linear, of degree $0$ and $u$-sesquilinear. That is, for $f(u) \in R[[u]]$, we have: $$f(u)(a,b)_\E = (f(u)a,b)_\E = (-1)^{n+|f||a|}(a,f(-u)b)_\E.$$ Here $n \in \ZZ/2$ is called the \emph{dimension} of $\E$. We moreover require the restriction of the pairing $( \cdot, \cdot )_{\WE}: \WE \otimes \WE \rightarrow R$ to be non-degenerate. Here $\WE = \E/u\E$.
	\end{enumerate}
A VSHS is the same as a formal TP-structure, and a VHS is a $\ZZ$-graded VSHS.
\end{defi}
\begin{remark}
	Hertling \cite{Her} defines (non-formal) TERP-structures. In that case, one works with convergent power series in $u$, instead of the formal power series we consider. We will always only be working with formal TEP-structures, we will thus drop the label `formal'. Additionally, Hertling considers a real structure (the R-structure), which we will not talk about.
\end{remark}
\begin{eg}
	The quantum TEP-structure is defined over $R = \Lambda$, where $\Lambda = \CC((Q^{\RR}))$ is the Novikov field. It is given by the $S^1$-equivariant quantum cohomology $QH^*(X;R)[[u]]$. The connection is as defined in \cite{Dub}. The pairing is given by the sesquilinear extension of the Poincar\'e pairing. One can additionally incorporate bulk-deformations to obtain a TEP-structure over a larger base $R = \Lambda[[H^*(X)]]$.
\end{eg}
\begin{eg}
	Let $\MC$ be an $R$-linear Calabi-Yau $A_\infty$-category. Getzler constructs a flat connection \begin{equation}\nabla^{GGM}: Der_{\KK}R \otimes HC_*^-(\mathcal{C}) \rightarrow u^{-1} HC_*^-(\mathcal{C}).\end{equation} Shklyarov \cite{Shk} and Katzarkov-Kontsevich-Pantev \cite{KKP} extend this connection in the $u$-direction. Shklyarov also defines a pairing on $HC_*^-(\mathcal{C})$, called the higher residue pairing. This pairing was shown to be covariantly constant by Shklyarov for the $u$-connection, and for the Getzler-Gauss-Manin connection by Sheridan \cite{She}. In general, $HC^-_*(\MC)$ might not be free and finitely-generated. This is conjectured to hold for smooth and compact $A_\infty$-categories, see \cite{KS1}. In the $\ZZ$-graded setting, Kaledin \cite{Ka} proves this conjectures holds. We will always assume this conjecture holds, so that $HC^-_*(\MC)$ is endowed with a TEP-structure.
\end{eg}
\begin{ncon}
	\label{VSHS conjecture}
	There exists a cyclic open-map $\mathcal{OC}^{-}: HC^-_*(Fuk(X)) \rightarrow QH^*(X;\Lambda)[[u]]$. This is a morphism of TEP-structures over $\Lambda$. By including bulk-deformation, one can extend this to a morphism of TEP-structures over $R = \Lambda[[H^*(X)]]$.
\end{ncon}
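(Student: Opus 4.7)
The plan is to follow the strategy announced in \cite{GPS2}, building the cyclic open-closed map $\OC^-$ on chains and then verifying compatibility with each layer of the TEP-structure in turn. First I would fix a model of the cyclic Fukaya chain complex equipped with Connes' cyclic differential $B$, and define $\OC^-$ using moduli spaces of discs with cyclically ordered boundary inputs labelled by Lagrangians and a single interior output, together with the extra marked point (or equivalently an $S^1$-parameter family of internal angles) that implements $S^1$-equivariance. The chain-level map $\OC^-$ must satisfy $d \circ \OC^- = \OC^- \circ (b + uB)$; this identity arises from the codimension-one boundary strata of the moduli, in the usual way. The underlying $u = 0$ map $\OC$ was constructed in \cite{FOOO} and \cite{Ga19}, so the main new input at this stage is a consistent choice of perturbation data making the $S^1$-action (or the rotational family of marked points) compatible with gluing.

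Next, for the T-structure, I would show that $\OC^-$ intertwines the Getzler-Gauss-Manin connection on $HC^-_*(Fuk(X))$ with the Dubrovin connection on $QH^*(X;\Lambda)[[u]]$. Concretely, for a derivation $v \in Der_{\KK}\Lambda$ one compares $\nabla_v^{GGM} \circ \OC^-$ with $\nabla_v^{Dub} \circ \OC^-$ at the chain level; the two should differ by the boundary of an explicit chain homotopy given by a moduli space of discs with an interior marked point weighted by $v$-derivatives of the Novikov weights, together with a marked boundary point that breaks cyclic symmetry. Both sides of the equation arise by cutting this moduli space along its two types of codimension-one boundary, so the argument is a TQFT-style cobordism between operations on the open string and the closed string. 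The key input is that the quantum connection is itself realised by inserting interior point constraints (divisor/dilation axioms), matching the geometric content of the $v$-derivative on the Fukaya side.

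For the E-structure I would argue analogously for $u \partial_u$, using instead a moduli space that interpolates between the $S^1$-fixed marked point and the free one to produce the required homotopy. For the P-structure, the task is to compare Shklyarov's higher residue pairing on $HC^-_*(Fuk(X))$ with the $u$-sesquilinear extension of the Poincar\'e pairing on $QH^*(X)$ under $\OC^-$. The natural geometric object is a moduli space of spheres with two interior outputs and cyclic boundary conditions giving two open-string Hochschild inputs, degenerating in one direction to the cup product followed by the trace (giving Poincar\'e pairing) and in the other to the Mukai-style pairing used by Shklyarov; covariant constancy of the higher residue pairing under $\nabla^{GGM}$ was established in \cite{She}, which I would use to reduce to a statement at $u = 0$.

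Finally, extending to the bulk-deformed base $R = \Lambda[[H^*(X)]]$ is carried out by replacing all moduli spaces with their bulk-decorated versions, incorporating an arbitrary number of unconstrained interior marked points constrained by a bulk class $\mathfrak{b} \in H^*(X)$; the arguments above then run formally in $\mathfrak{b}$. The main obstacle I anticipate is not any single one of these verifications but rather the foundational package of coherent perturbation data: one needs the whole collection of moduli spaces (for $\OC^-$, for each connection-homotopy, for the pairing-homotopy, and their bulk analogues) to fit into a single compatible system so that all the chain-level identities hold simultaneously. This is precisely the technical content announced in \cite{GPS2}, and is the reason the statement is posed here as a conjecture.
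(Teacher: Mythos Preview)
The statement you are attempting to prove is a \emph{conjecture} in the paper, not a theorem: the paper offers no proof and explicitly notes that ``such a morphism has not been constructed in general,'' citing only partial results (\cite{FOOO}, \cite{Ga19}, the announced \cite{GPS2}, and the author's own local TE-structure version in \cite{Hug}). So there is no proof in the paper to compare against, and your submission is not a proof either---it is, as you yourself say, a plan that terminates in the acknowledgment that the foundational perturbation package is exactly what remains open.

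That said, your outline is broadly consistent with the strategy the paper and its references pursue in the restricted settings where results do exist. In \cite{Hug} the author constructs $\OC^-$ for a single Lagrangian $CF^*(L,L)$ under Assumptions~\ref{assumptions} and proves it is a morphism of TE-structures; the homotopies witnessing compatibility with the connections are built from the horocyclic moduli spaces $\mathcal{M}_{k+1,l;\perp_i}$ and the operations $\mathfrak{q}_{k,l;\perp_i}$ (see Definition~4.68 and Proposition~4.69 cited from \cite{Hug}), which is the precise incarnation of the ``extra interior marked point constrained along a path'' you gesture at. One small slip: you want to compare $\OC^- \circ \nabla_v^{GGM}$ with $\nabla_v^{Dub} \circ \OC^-$, not $\nabla_v^{GGM} \circ \OC^-$ on both sides. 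Note also that the paper and \cite{Hug} do not verify the P-structure (pairing) compatibility even in the local case---only the TE-structure---so your paragraph on the pairing goes beyond anything established here.

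In short: there is no gap to diagnose because you have not claimed a proof, and the paper has not given one; your sketch matches the expected shape of the argument, but the conjecture remains open precisely for the reason you identify.
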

Such a morphism has not been constructed in general. Partial results exist: \cite{FOOO} and \cite{Ga19} construct cyclic open-closed maps in a wide range of settings. Ganatra-Perutz-Sheridan \cite{GPS} have announced work proving this is an isomorphism of TP-structures when $X$ is a projective Calabi-Yau manifold. In \cite{Hug} the author proves (under the same technical assumptions as in this paper, see section \ref{assumptions}) a local version of this conjecture, constructing a morphism of TE-structures $\OC^-: HC^-_*(CF^*(L,L)) \rightarrow QH^*(X;R)[[u]]$. There, all bulk-deformations by elements in $H^*(X)$ are taken into account.

\begin{defi}
	A \emph{splitting} of an TEP-structure is a $\KK$-linear map $s: \WE \rightarrow \E$ which splits the canonical map $\pi: \E \rightarrow \WE$, such that $(s(a),s(b))_\E = (a,b)_{\WE}$ for all $a,b \in \WE$.
\end{defi}
Barannikov shows that a TEP-structure together with a splitting gives rise to enumerative invariants. For Gromov-Witten invariants, one takes the splitting:
\begin{align}
	s^{GW}: QH^*(X) &\rightarrow \DX\nonumber\\
	\alpha &\mapsto \alpha.
\end{align}

To obtain enumerative invariant from the Fukaya category, one thus needs to characterise the Gromov-Witten splitting on $HC^-_*(Fuk(X))$. Such a characterisation has not been obtained in general. When $X$ is Calabi-Yau and $HC^-_*(Fuk(X))$ is of Hodge-Tate type, Ganatra-Perutz-Sheridan \cite{GPS} characterise the Gromov-Witten splitting. Amorim-Tu \cite{AT} do this when $HH^*(Fuk(X))$ is a semi-simple ring.

Of additional interest in the Fano setting is the E-structure (the $u$-connection) on $HC^-_*(Fuk(X))$ and on quantum cohomology. This was considered by the author in \cite{Hug}, where it was shown that the negative cyclic open-closed map respects the E-structure.
 
We now return to the quintic threefold. In \cite{Wal} and \cite{WM} Morrison and Walcher propose an extension of the classical picture of enumerative mirror symmetry by including open Gromov-Witten invariants of the real locus of the Fermat quintic, which were defined in \cite{So}, and the normal function associated to a family of curves in $Y$ (defined by \cite{Grif}). This extended enumerative mirror symmetry can be encapsulated by an isomorphism of \emph{extensions of VHS}. Walcher uses this to predict values for all genus $0$ open Gromov-Witten invariants of the real locus of the quintic. These predictions were later verified in \cite{PSW}. This paper proposes a framework to show that homological mirror symmetry implies this extended enumerative mirror symmetry.

The main inspiration is the construction by Solomon-Tukachinsky \cite{ST2} of the relative quantum T-structure, $QH^*(X,L)[[u]]$. This is a modification of quantum cohomology which additionally incorporates open Gromov-Witten invariants. We additionally define a connection in the $u$-direction, and show this equips $QH^*(X,L)[[u]]$ with a TE-structure. When the Lagrangian is null-homologous, i.e $[L] = 0 \in H_n(X;\QQ)$, this yields an extension of TE-structures:\begin{equation}
	0 \rightarrow \Lambda[[u]] \rightarrow QH^*(X,L;\Lambda)[[u]] \rightarrow QH^*(X;\Lambda)[[u]] \rightarrow 0.
\end{equation}
The TE-structure on $\Lambda[[u]]$ is defined in Theorem \ref{thm: rel OC respects TE introduction}. To compare this with categorical notions, we need their duals. To this end, let $QH_*(X,L;\Lambda)[[u]]$ be the dual TE-structure, called \emph{relative quantum homology}. Similarly we write $QH_*(X;\Lambda)[[u]]$ for the dual of $QH^*(X;\Lambda)[[u]]$.

On the categorical side, given a unital $A_\infty$-category $\MC$ over a ring $R$ and an object $L \in \MC$, we construct a version of negative cyclic homology called \emph{relative cyclic homology}, denoted $HC^-_*(\MC,L)$. We equip this with a TE-structure (again assuming the non-commutative Hodge-deRham degeneration conjecture holds). If $L$ has zero Chern character $ch(L) = [\mathbf{e}_L] = 0 \in HC^-_*(\MC)$, it fits into an exact sequence of TE-structures: \begin{equation}
	0 \rightarrow HC^-_*(\MC) \rightarrow HC^-_*(\MC,L) \rightarrow R[[u]] \rightarrow 0.
\end{equation}
\begin{remark}
	In Section \ref{section: relative cyclic homology} we construct a more general version of relative cyclic homology associated to any $A_\infty$-functor $F: \MC \rightarrow \MD$. The above construction is obtained by taking $F$ to be the inclusion of the object $L$ into $\MC$.
\end{remark} 
We conjecture: \begin{ncon}
\label{con: relative cyclic OC respects connections}
There exists a relative cyclic open-closed map \begin{equation*}
	\OC^-_L: HC^-_*(Fuk(X),L) \rightarrow QH_*(X,L)[[u]],
\end{equation*}which is a morphism of TE-structures. When $L$ is null-homologous it fits into the commutative diagram:
\[
\begin{tikzcd}
	0 \ar[r] &HC_*^-(Fuk(X)) \ar[d, "\mathcal{OC}^{-}"] \ar[r] & HC_*^-(Fuk(X),L) \ar[d, "\mathcal{OC}_L^{-}"] \ar[r] & \Lambda[[u]]  \ar[d,equal] \ar[r] &0\\
	0 \ar [r] &QH_*(X;\Lambda)[[u]] \ar[r] & QH_*(X,L;\Lambda)[[u]] \ar[r] & \Lambda[[u]] \ar[r] &0
\end{tikzcd}
\]
\end{ncon}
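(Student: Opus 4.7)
The strategy is to first construct $\OC^-_L$ at the chain level and then verify the three claimed properties in turn: commutativity of the diagram, compatibility with the TE-structures, and the isomorphism property. The construction will mirror the extension structure on both sides. On the algebraic side, $HC^-_*(Fuk(X),L)$ is built from the chain-level null-homotopy witnessing $ch(L) = [\bfe_L] = 0$ in $HC^-_*(Fuk(X))$; on the geometric side, $QH_*(X,L)[[u]]$ is built from a singular chain whose boundary represents $[L]$. My plan is to define $\OC^-_L$ on the image of the $HC^-_*(Fuk(X))$-summand by the usual cyclic open-closed map, and on a generator of the extra $\Lambda[[u]]$ summand by counting moduli spaces of disks with boundary on $L$ and one interior marked point constrained to lie on the bounding chain for $L$, with cyclic boundary decorations. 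A correction term pairing the algebraic null-homotopy of $[\bfe_L]$ with these disk moduli is needed to kill the coboundary of the na\"ive definition.

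Commutativity of the diagram is largely built in. The right-hand square commutes because both quotient maps project onto $\Lambda[[u]]$ and the relative component of $\OC^-_L$ is defined so that the generator of $\Lambda[[u]]$ is mapped to a class projecting to $1$. The left-hand square reduces to the known fact that the ordinary cyclic open-closed map commutes with the inclusion of the $HC^-_*(Fuk(X))$-subobject into the extension.

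Compatibility with the TE-structures is where the real work lies, and I expect it to be the main obstacle. For the Getzler--Gauss--Manin direction, one must show that varying a bulk parameter $t \in H^*(X)$ intertwines the extension of the GGM connection on $HC^-_*(Fuk(X),L)$ with the relative quantum connection defined by Solomon--Tukachinsky. Following the strategy of \cite{Hug} and the announced work \cite{GPS2}, this should be proved by introducing a one-parameter family of auxiliary moduli spaces and analysing the codimension-one strata, which produce chain-level homotopies between the two composites. The new feature is that the bounding chain for $L$ and the algebraic null-homotopy of $ch(L)$ enter these degenerations, and one must check that the extra boundary contributions on the two sides cancel against each other. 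Compatibility with the $u$-connection is handled analogously, via a moduli parameter coupling disks with rotated interior marked points, again following \cite{Hug}. The sign and orientation bookkeeping needed to match the two null-homotopies across the open-closed map is the hardest technical point.

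Finally, the isomorphism claim follows by applying the five-lemma to the commutative diagram in the statement, using Conjecture~\ref{VSHS conjecture} that $\OC^-$ is an isomorphism on $HC^-_*(Fuk(X))$ together with the fact that the rightmost vertical map is the identity on $\Lambda[[u]]$.
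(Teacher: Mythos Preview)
The statement you are attempting to prove is a \emph{conjecture} in the paper; there is no proof of it in full generality. What the paper does prove is a local analogue (Theorem~\ref{thm: relative OC is a morphism of TE-structures}), with $CF^*(L,L)$ in place of $Fuk(X)$ and without the isomorphism claim. Comparing your sketch against that local proof reveals a substantive difference in the construction itself.

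Your proposal defines $\OC^-_L$ on the extra $\Lambda[[u]]$ summand by counting disks with an interior marked point constrained to a bounding chain for $L$, plus a correction involving the algebraic null-homotopy of $ch(L)$. The paper's construction is much simpler and uses neither ingredient. It defines a pairing $C^*(\mathfrak{i}) \otimes CC^-_*(A,L) \to Q^e[[u]]$ by
\[
\langle (f,\eta), \OC^-_L(a,\alpha) \rangle := \langle \eta, \OC^-(\alpha) \rangle + (-1)^{1+(n+1)|a|} fa,
\]
and then dualises in the first slot. No bounding chain $C$ enters here; $C$ only appears later, in Section~\ref{section: OGW from Fuk}, when picking out specific elements of $QH_*(X,L)$. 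Likewise no algebraic null-homotopy of $\bfe_L$ is used in the definition: at the chain level $CC^-_*(A,L)$ is just the cone $\KK[[u]][1] \oplus CC^-_*(A)$, and the extra summand pairs with the extra summand of $C^*(\mathfrak{i})$ by multiplication. Your construction would likely work too, but it adds complexity that then has to be controlled when proving compatibility with the connections; the paper's definition avoids this.

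Your plan for the TE-compatibility (chain homotopies $G_v$ from auxiliary moduli spaces) matches the paper's approach in spirit; the actual proof is Propositions~\ref{prop: relative oc respects v connection} and~\ref{prop: homotopy pairing relative e connection} together with the grading check in Lemma~\ref{lem: relative OC respects grading}. Your five-lemma argument for the isomorphism is fine but is conditional on Conjecture~\ref{VSHS conjecture}, which is itself open in general; the paper does not claim this part even locally.
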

We use the same technical setup as Solomon-Tukachinksy \cite{ST3}, where a Fukaya $A_\infty$-algebra $CF^*(L,L)$ is constructed using differential forms. We then prove the above conjecture in this local setting:
\begin{nthm}[{Theorem \ref{thm: relative OC is a morphism of TE-structures}}]
	\label{thm: rel OC respects TE introduction}
	Let $L\subset X$ be an, oriented, relatively-spin Lagrangian submanifold equipped with a $U(\Lambda)$-local system. Suppose there exists a complex structure $J$ such that $(L,J)$ satisfy Assumptions \ref{assumptions}. Furthermore, suppose $L$ is equipped with a bounding pair $(b,\ga)$ with curvature $c \in R$ (see section \ref{section: bulk-deformations and bounding cochains}). Then there exists a relative cyclic open-closed map $$\OC^-_L: HC^-_*(CF^*(L,L),L) \rightarrow QH_*(X,L;R)[[u]],$$ which is a morphism of TE-structures over $R$. When $L$ is null-homologous, it fits into the commutative diagram:
	\[
	\begin{tikzcd}
		0 \ar[r] &HC_*^-(CF^*(L,L)) \ar[d, "\mathcal{OC}^{-}"] \ar[r] & HC_*^-(CF^*(L,L),L) \ar[d, "\mathcal{OC}_L^{-}"] \ar[r] & R[[u]]  \ar[d,equal] \ar[r] &0\\
		0 \ar [r] &QH_*(X;R)[[u]] \ar[r] & QH_*(X,L;R)[[u]] \ar[r] & R[[u]] \ar[r] &0
	\end{tikzcd}
	\]
	Here $R[[u]]$ is the TE-structure equipped with the connection: \begin{align*}
		\nabla_v(f) &= v(f) - v(c)f\\
		\nabla_{\partial_u}(f) &= \partial_u(f) - \frac{f}{2u} + \frac{cf}{u^2}
	\end{align*}
\end{nthm}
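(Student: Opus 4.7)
The plan is to build $\OC^-_L$ at the chain level by extending the absolute negative cyclic open-closed map $\OC^-$ of \cite{Hug} with a new component which maps the extension generator on the algebraic side to the extension generator on the geometric side. Recall that the relative cyclic complex $CC^-_*(CF^*(L,L),L)$ introduced in Section \ref{section: relative cyclic homology} is obtained from $CC^-_*(CF^*(L,L))$ by adjoining a generator whose differential realises the bounding-cochain identity for $[\mathbf{e}_L] = 0$, and the relative quantum homology $QH_*(X,L;R)[[u]]$ of Solomon--Tukachinsky is similarly built by adjoining a generator witnessing $[L] = 0$. On the subcomplex $CC^-_*(CF^*(L,L))$ we set $\OC^-_L = \OC^-$, and on the additional generator we define $\OC^-_L$ by counting pseudoholomorphic discs with boundary on $L$, interior inputs decorated by the bulk class $\ga$, boundary inputs decorated by the bounding cochain $b$, and one additional $S^1$-equivariant interior marked point supporting a gravitational descendant, with output landing in the added generator of $QH_*(X,L;R)[[u]]$.

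First I would verify that $\OC^-_L$ is a chain map. Codimension-one strata of the new moduli either produce absolute $\OC^-$-contributions (from disc bubbles carrying the bounding pair) or reproduce the Maurer--Cartan equation for $(b,\ga)$. These contributions precisely match the defining relation of the extension on both sides, so the cancellation that makes $\OC^-$ a chain map propagates to $\OC^-_L$. Commutativity of the stated exact diagram is then immediate from the construction. Flatness with respect to the T-connection (the bulk-deformation directions) follows from the one-parameter cobordism argument of \cite{Hug}: varying a bulk parameter produces codimension-one boundary strata whose difference implements the connection. The only genuinely new strata arise on the extension generator, where an infinitesimal boundary input is replaced by the derivative of the curvature, yielding the $-v(c) f$ summand in the prescribed connection on $R[[u]]$.

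The E-connection is handled by a similar family argument, this time varying the descendant structure at the distinguished interior marked point. The standard degree shift for $\OC^-$ produces the $-f/(2u)$ term, while a new boundary stratum in which the descendant marked point collides with a disc bubble carrying the curvature of $(b,\ga)$ produces the $cf/u^2$ term. The main obstacle will be the precise identification of this last stratum: it demands a careful analysis of the codimension-one degenerations of the relevant one-parameter family of moduli of discs with an $S^1$-equivariant descendant, a verification that competing boundary strata cancel or are absorbed into the existing differentials, and a sign and multiplicity computation showing that the surviving contribution is exactly $c f/u^2$. Once this identification is in place, the remaining verifications parallel the absolute case treated in \cite{Hug}, with the extension data tracked additively through each step.
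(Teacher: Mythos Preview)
Your proposal introduces substantially more geometric machinery than the paper uses, and in doing so misses the paper's central simplification. In the paper, the relative cyclic open-closed map is \emph{not} built from any new disc counts on the extension generator; it is defined purely as a pairing (Definition \ref{defi: relative open-closed pairing}):
\[
\langle (f,\eta), \mathcal{OC}^-_L(a,\alpha) \rangle := \langle \eta, \mathcal{OC}^-(\alpha) \rangle + (-1)^{1+(n+1)|a|} fa.
\]
The second summand is just the product of scalars coming from the two extension generators. There are no gravitational descendants and no new moduli spaces. All the nontrivial disc counts enter through the \emph{connections} on the target $QH^*(X,L)[[u]]$ (via the $\q_{-1}$-operations of Section \ref{section: q -1 operations}), not through the map $\OC^-_L$ itself.

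Consequently the chain-map verification and the compatibility with $\nabla_v$ reduce to evaluating the absolute homotopy pairing $G_v$ on the unit Hochschild chain $a\,\mathbf{e}_L$. The key identity (equation \eqref{eq:extra terms relative OC respects conn}) is
\[
(-1)^{|\eta|+|v|+|a|}\langle \eta, G_v(a\,\mathbf{e}_L) \rangle = (-1)^{(n+1)|a|} v(\qb_{-1,1})(\eta)\,a,
\]
which is established using the unit-on-the-horocycle property (Proposition \ref{unit on the horocycle 2}) and Lemma \ref{le:v of q -1}. This is the genuine computation you would need to carry out, and it has no analogue in your proposal.

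For the E-structure, the paper does not run a separate family-of-moduli argument with a descendant degeneration. Instead, $\nabla^L_{\partial_u}$ is obtained algebraically as $\frac{Gr^-_L}{2u} - \frac{\nabla^L_E}{u}$ from the Euler grading (Section \ref{section: Euler-grading on relative quantum cohomology}); once the T-structure compatibility and the grading compatibility (Lemma \ref{lem: relative OC respects grading}) are in place, the $u$-connection statement follows formally. Your proposed collision-of-descendant argument for the $cf/u^2$ term is therefore unnecessary, and the ``main obstacle'' you identify does not arise in the paper's approach.
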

Similar to how $HC^-_*(Fuk(X))$ contains information about closed Gromov-Witten invariants, we propose $HC^-_*(Fuk(X),L)$ can be used to extract open Gromov-Witten invariants of $L$ from the Fukaya category. In general, it is not clear how the extension of TE-structures determines the open Gromov-Witten invariants. However, as evidence for this proposal, we prove:
 \begin{nthm}
	\label{thm: OGW from Fuk quintic threefold}
	Let $X$ be any Calabi-Yau and $L \subset X$ any Lagrangian brane with $[L] = 0 \in H_n(X)$. Then, assuming conjecture \ref{con: relative cyclic OC respects connections} has been proven for $(X,L)$, the Fukaya category of $X$ determines the total open Gromov-Witten invariant: \begin{equation} 
		OGW_{\star,0}(\_) = \sum_{d \in H_2(X,L)} q^{\omega(d)} OGW_{d,0}(\_): H^{n-1}(X) \rightarrow \Lambda.
	\end{equation}
	Here $OGW_{d,0}: H^{n-1}(X) \rightarrow \RR$ denote the genus $0$ open Gromov-Witten invariants in degree $d \in H_2(X,L)$ with no boundary marked points, as constructed in \cite{ST2}.
\end{nthm}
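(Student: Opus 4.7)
The plan is to reduce the claim to a property of the relative quantum TE-structure via Conjecture \ref{con: relative cyclic OC respects connections}, and then extract $OGW_{\star,0}$ from the connection on $QH_*(X,L;R)[[u]]$. First, from $Fuk(X)$ and the null-homologous object $L$ (with $ch(L) = 0$) one constructs, over the base $R = \Lambda[[H^*(X)]]$, the extension of TE-structures
\begin{equation*}
0 \to HC_*^-(Fuk(X)) \to HC_*^-(Fuk(X),L) \to R[[u]] \to 0,
\end{equation*}
equipped with its Getzler-Gauss-Manin connection, $u$-connection, and higher-residue pairing. This is a purely categorical output, depending only on $Fuk(X)$ and the object $L$.

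Applying the hypothesized relative cyclic open-closed map $\OC^-_L$ identifies this with Solomon-Tukachinsky's extension
\begin{equation*}
0 \to QH_*(X;R)[[u]] \to QH_*(X,L;R)[[u]] \to R[[u]] \to 0
\end{equation*}
as extensions of TE-structures over $R$. Thus the Fukaya category determines the right-hand extension in its entirety.

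It then remains to extract $OGW_{\star,0}$ from this extension. Choose any $R[[u]]$-module splitting $\sigma: R[[u]] \to QH_*(X,L;R)[[u]]$ of the projection and set $\eta = \sigma(1)$. For each $\gamma \in H^{n-1}(X)$, let $\partial_\gamma$ denote the corresponding bulk-deformation derivation on $R$. Because $\OC^-_L$ respects connections and the quotient connection on $R[[u]]$ is determined by the extension, the element $\nabla_{\partial_\gamma} \eta - \sigma(\nabla_{\partial_\gamma}^{\mathrm{quot}} 1) \in u^{-1} QH_*(X;R)[[u]]$ is determined by the categorical data. By Solomon-Tukachinsky's construction of the relative quantum connection, the $u^0$-coefficient of this element, evaluated at the origin of the bulk-parameter space and paired with $[X]$ via the Poincar\'e pairing, yields $OGW_{\star,0}(\gamma)$.

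The main obstacle is this last identification: explicitly matching the structure coefficients of the Solomon-Tukachinsky connection against the definition of $OGW_{\star,0}$. This requires carefully unpacking the definition of the relative quantum connection, whose coefficients are defined as integrals over moduli spaces of holomorphic discs with boundary on $L$ and one interior marked point on a cycle Poincar\'e dual to $\gamma$. One must also confirm that the ambiguity in the choice of splitting $\sigma$ only shifts $\eta$ by an element of $QH_*(X;R)[[u]]$, and that this ambiguity does not affect the final pairing because $[L] = 0 \in H_n(X)$ causes the relevant correction terms to vanish.
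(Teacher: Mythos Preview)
Your high-level strategy---pass to the relative quantum TE-structure via Conjecture~\ref{con: relative cyclic OC respects connections} and read off the invariant from the connection---is the right one, but the extraction procedure you propose does not work. With the obvious splitting, the off-diagonal part of $\nabla_{\partial_\gamma}$ is the functional $\eta \mapsto -u^{-1}\partial_\gamma(\qb_{-1,1})(\eta)$; pairing this with $[X]$ amounts to evaluating on the top class, which by a dimension count (on a Calabi--Yau, $\dim\MM_{0,2}(\beta)=n+1$ while $|vol|+|\gamma|=3n-1$) vanishes for $n\geq 2$. More fundamentally, $OGW_{\star,0}$ is not a raw coefficient of the connection: by Solomon--Tukachinsky it involves the closed correction $\int_C \q_{\emptyset}(\cdots)$ coming from a bounding chain $C$ with $\partial C=[L]$, which your procedure never sees. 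Your claim that the splitting ambiguity disappears ``because $[L]=0$'' is also unjustified: changing $\sigma$ by $\psi:R[[u]]\to QH_*(X)[[u]]$ adds $\nabla_{\partial_\gamma}\psi(1)-\psi(\nabla^{\mathrm{quot}}_{\partial_\gamma}1)$, which has no reason to vanish after pairing with $[X]$. Finally, Conjecture~\ref{con: relative cyclic OC respects connections} as stated is over $\Lambda$, so the derivations $\partial_\gamma$ for $\gamma\in H^{n-1}(X)$ are not available to you.

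The paper's argument is quite different and more delicate. It works only in the Novikov direction over $\CC((q))$ and uses that the $\ZZ$-graded T-structure is a regular singular VHS. Via the Deligne lattice and hard Lefschetz (which singles out a canonical bounding cycle $C$, Lemma~\ref{lem: unique bounding cycle}), one constructs a flat holomorphic section $h$ of $QH_*(X,L)$ lifting $1$; its homology-degree $n-1$ component $f_{n-1}$ is well-defined up to constants. The flatness equation $\nabla^{L,\vee}_{q\partial_q}h=0$ applied to $(0,\eta^i)$ gives $q\frac{d}{dq}g_i=\q_{-1,2}(\omega,\eta^i)-\int_C\q_{\emptyset,2}(\omega,\eta^i)=OGW_{\star,0}(\omega,\eta^i)$, and the divisor axiom then integrates this to $g_i=OGW_{\star,0}(\eta^i)$. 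Showing this is actually determined by the Fukaya category also requires the Ganatra--Perutz--Sheridan lemma (Lemma~\ref{lem: GPS lemma}) to recover the grading and the constants $H^*(X;\CC)$ from the VHS---a step your sketch omits.
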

\begin{remark}
	In \cite{ST2} open Gromov-Witten invariants of a null-homologous Lagrangian depend on the choice of cycle $C$ such that $\partial C = L$. In Lemma \ref{lem: unique bounding cycle} we show that for null-homologous Lagrangians in a K\"ahler manifold, there always exists a canonical choice of bounding cycle $C$. Thus we can talk about \emph{the} open Gromov-Witten invariants of such $L$.
\end{remark}
\begin{remark}
	In the situation of Theorem \ref{thm: OGW from Fuk quintic threefold}, the degree axiom shows that the only non-trivial open Gromov-Witten invariants with no boundary marked points and one interior marked point, are those with interior constraints on $H^{n-1}(X)$.
\end{remark}

In the Fano setting we can say more about the extension. For Fanos, we can set the Novikov parameter equal to $1$, and we additionally set all bulk parameters equal to $0$, so that we work over $\CC$, and thus only consider an E-structure. First recall from \cite[Lemma~6.3]{Hug} the decomposition of E-structures of $QH^*(X)[[u]]$ by eigenvalues of $c_1 \star: QH^*(X) \rightarrow QH^*(X)$: \begin{equation}
	QH_*(X)[[u]] = \bigoplus_w QH_*(X)[[u]]_w.
\end{equation}
This decomposition arose as a consequence of general theory of differential equations considered by Hukuhara \cite{Huk}, Levelt \cite{Lev}, Turrittin \cite{Tur} and Wasow \cite{Was}. It was then shown (see \cite[Corollary~6.5]{Hug}) that \begin{equation}
	\OC^-(HC^-(Fuk_w(X))) \subset QH_*(X)[[u]]_w,
\end{equation}
where $Fuk_w(X)$ denotes the monotone Fukaya category with objects Lagrangians, equipped with local systems, with Maslov index $2$ disk count equal to $w$. By construction, for a Lagrangian brane $L \in Fuk_w(X)$, relative cyclic homology takes the form:
 \begin{equation}
	0 \rightarrow HC^-_*(Fuk_w(X)) \rightarrow HC^-_*(Fuk_w(X),L) \rightarrow \E^{-\frac{w}{u}}[-1] \rightarrow 0,
\end{equation}
where $\E^{-\frac{w}{u}}[-1]$ is the E-structure given by $\CC[[u]]$ with connection $\nabla_{\frac{d}{du}} = \frac{d}{du} -\frac{1}{u} + \frac{w}{u^2}$. Now consider relative quantum homology for such $L$. It follows from the construction (see Section \ref{section: the relative quantum TE-structure}) that the eigenvalue decomposition of $QH_*(X,L)[[u]]$ takes the form \begin{equation}
	QH_*(X)[[u]]_{w' \neq w} \cong QH_*(X,L)_{w' \neq w},
\end{equation}
and an exact sequence: \begin{equation}
 0 \rightarrow QH_*(X)[[u]]_w \rightarrow QH_*(X,L)[[u]]_w \rightarrow \E^{-\frac{w}{u}} \rightarrow 0.
 \end{equation}
In particular, if $L$ is a Lagrangian brane with $w$ not an eigenvalue of $c_1 \star$ (which implies $L$ is a zero object in $Fuk(X)$), then $QH_*(X,L)[[u]] \cong QH_*(X)[[u]] \oplus \E^{-\frac{w}{u}}$. Thus in this case the extension is trivial.
Finally, we obtain from \cite[Lemma~2.14]{Hug}: \begin{nlemma}The relative cyclic open-closed map satisfies:
	$$\OC^-_L(HC^-_*(Fuk_w(X),L)) \subset QH_*(X,L)[[u]]_w.$$
\end{nlemma}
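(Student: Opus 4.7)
The plan is to reduce the statement to \cite[Lemma~2.14]{Hug}, which asserts that any morphism of E-structures respects the Hukuhara-Levelt-Turrittin-Wasow eigenvalue decomposition at the irregular singularity $u = 0$. In the monotone Fano setting we specialize the Novikov parameter to $1$ and set all bulk parameters to $0$, so that both sides are $\CC[[u]]$-modules equipped with E-structures and the decomposition theorem applies directly. By the Theorem above (specialized to this setting), $\OC^-_L$ is a morphism of E-structures, so \cite[Lemma~2.14]{Hug} immediately gives the desired compatibility provided we can identify the eigenvalues.

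It therefore suffices to verify that $HC^-_*(Fuk_w(X),L)$ is pure of eigenvalue $w$ as an E-structure. For the subobject, $HC^-_*(Fuk_w(X))$ has pure eigenvalue $w$ by construction of the monotone summand and \cite[Corollary~6.5]{Hug}. For the quotient, the connection on $\E^{-\frac{w}{u}}$ is $\nabla_{d/du} = d/du + w/u^2$, so $(u^2 \nabla_{d/du})|_{u=0} = w$, giving pure eigenvalue $w$. Since the eigenvalue decomposition is exact on the category of E-structures (a standard consequence of the Hukuhara-Levelt-Turrittin normal form), the exact sequence
\begin{equation*}
0 \rightarrow HC^-_*(Fuk_w(X)) \rightarrow HC^-_*(Fuk_w(X),L) \rightarrow \E^{-\frac{w}{u}} \rightarrow 0
\end{equation*}
forces $HC^-_*(Fuk_w(X),L)$ to be pure of eigenvalue $w$ as well.

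Applying \cite[Lemma~2.14]{Hug} to $\OC^-_L$ then yields
\begin{equation*}
\OC^-_L\bigl(HC^-_*(Fuk_w(X),L)\bigr) \subset QH_*(X,L)[[u]]_w,
\end{equation*}
since the right-hand summand is characterized by pure eigenvalue $w$ via the analogous exact sequence displayed just before the statement. Given how much of the heavy lifting has already been done earlier in the paper, I do not anticipate a genuine obstacle here; the only point requiring a sentence of care is the exactness of the eigenvalue decomposition on E-structures, which ensures that the extension class cannot mix eigenvalues when both subobject and quotient are pure of the same value.
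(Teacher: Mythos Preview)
Your proposal is correct and takes essentially the same approach as the paper: the paper simply cites \cite[Lemma~2.14]{Hug} and leaves everything else implicit, whereas you spell out why $HC^-_*(Fuk_w(X),L)$ is pure of eigenvalue $w$ via the exact sequence. One minor point of care: the input that $\OC^-_L$ is a morphism of E-structures is, in the $Fuk_w(X)$ setting, part of Conjecture~\ref{con: relative cyclic OC respects connections} rather than the proven Theorem~\ref{thm: relative OC is a morphism of TE-structures} (which is only established for $CF^*(L,L)$), so your phrase ``By the Theorem above'' should be read as invoking the conjectured global statement.
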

\subsection{Outline}
In section \ref{section: relative cyclic homology} we construct relative cyclic homology. In section \ref{section: technical setup} we state the technical assumptions necessary for our constructions and recall the definitions of the $\q$-operations used by Solomon-Tukachinsky. In section \ref{section: the relative quantum TE-structure} we modify the construction of relative quantum cohomology by \cite{ST2} to incorporate the equivariant parameter $u$, and additionally construct a connection in the $u$-direction. In section \ref{section: the relative cyclic open closed map} we construct the relative cyclic open-closed map and prove that it respects the connections. Finally, in section \ref{section: extensios of VHS} we show how to obtain open Gromov-Witten invariants from the Fukaya category of a Calabi-Yau. There, we also prove a classification result for extensions of VHS.
\subsection{Acknowledgements}
	I am greatly indebted to my advisor, Nick Sheridan, who explained to me the initial idea of the relative cyclic open-closed map. I would also like to thank Sara Tukachinsky for explaining a wide variety of ideas from her series of joint papers with Jake Solomon. This work was partly supported by the Royal Society Research Grant for Research Fellows: RGFnR1n181009ERC, the ERC Starting Grant 850713 – HMS and EPSRC Grant EP/W015749/1.
	\section{Relative cyclic homology}
	\label{section: relative cyclic homology}
	
	Let $\MC$ and $\MD$ be strictly unital, uncurved, $R$-linear $A_\infty$-categories. Let $F: \mathcal{C} \rightarrow \mathcal{D}$ be a unital, $R$-linear $A_{\infty}$-functor. This induces a map on Hochschild chains: \[
	F_*: CC_*(\mathcal{C}) \rightarrow CC_*(\mathcal{D}).
	\]
	We then define: \begin{defi}
	The \emph{Hochschild cone-complex} of $F$ is given by the cone of $F_*$: \[
		CC_*(F) := CC_*(\mathcal{C})[1] \oplus CC_*(\mathcal{D}).
		\]
	The differential is thus given by $d(\alpha, \beta) := (b_{\MC}(\alpha), b_{\MD}(\beta) +(-1)^{|\al|}F_*(\alpha))$. Denote the homology of this complex by $HH_*(F)$. Similarly, we can consider the (negative) cyclic cone-complex $CC_*^-(F)$ given by: \[
	CC^-_*(F) := CC^-_*(\mathcal{C})[1] \oplus CC^-_*(\mathcal{D}).
	\]
	Here the differential is given by $d_F^-(\alpha, \beta) = (d^-_\MC(\alpha), d^-_\MD(\beta) + (-1)^{|\al|}F_*(\alpha))$, where $d^-_{\MC} = (b_{\MC} + uB_{\MC})$ and similarly for $d^-_{\MD}$. Let $HC^-_*(F)$ denote the homology of this complex, and call it the \emph{relative negative cyclic homology}.
	\end{defi}
	\begin{remark}
		The signs here are chosen so that $d_F^-$ satisfies $d_F^-(f\al,f\beta) = (-1)^{|f|}fd_F^-(\al,\beta)$ and squares to zero.
	\end{remark}
	\subsection{The relative Getzler-Gauss-Manin connection}
	We will now define a relative notion of the Getzler-Gauss-Manin connection on $HC^-_*(F)$. For the construction of the absolute connection we refer to \cite{Ge}, or \cite[Definition~3.16]{Hug} for a definition using the same setup as used here. 
	
	\begin{notation}
		Let $S_n[k]$ be the set of all partitions of $\lbrace 1, \dots k\rbrace$ into $n$ ordered sets of the form $(1, 2, \dots, k_1)$, $(k_1 + 1, \dots, k_1 + k_2), \dots ,(k_1 + \dots + k_{n-1} + 1, \dots, k_1 + \dots + k_n)$. Let $(i:n)$ denote the $i$th set of the partition. The size of $(i:n)$ is $k_i$. We allow for the case $k_i = 0$. For $\alpha = (\alpha_1,\dots,\alpha_k)$, let $\al^i$ denote the tuple $\alpha^{(i:n)}$, and set $\epsilon(\alpha) := \sum_{j=1}^k |\alpha_j|'$. For a partition $P \in S_n[k]$ let $\epsilon_i = \epsilon(\alpha^{i})$.
	\end{notation}

	Sheridan \cite[Appendix~B]{She} shows that $F_*: HC^-_*(\MC) \rightarrow HC^-_*(\MD)$ respects the Getzler-Gauss-Manin connection by defining a homotopy $H: Der_\KK R \otimes CC^-_*(\MC) \rightarrow u^{-1}CC^-_*(\MD)$. It is given by: $H = \frac{H^2 - H^1}{u} + H^3$, where for $v \in Der_\KK R$ we have: \begin{align}
		H^1_v(\al) &= \sum_{\substack{s \leq k \\ P \in S_s[k]}} (-1)^{\star_1} F^*\left(\al^{s-2},v(\m_\MC)(\al^{s-1}),\al^s, \al_0, \al^1 \right)[F^*(\al^2)|\dots|F^*(\al^{s-3})]\\
		H^2_v(\al) &= \sum_{\substack{s \leq k \\ P \in S_s[k]}} (-1)^{\star_2} \m_\MD \left(F^*(\dots), \dots, v(F^*)(\dots), F^*(\dots), \dots, F^*(\al^s, \al_0, \al^1), \dots, F^*(\dots)\right) [F^*(\dots)| \dots ]\\
		H^3_v(\al) &= \sum_{\substack{s \leq k \\ P \in S_s[k]}} (-1)^{\star_3} \mathbf{e} \left[ F^*(\dots)| \dots| v(F^*)(\dots)|\dots|F^*(\al^s, \al_0,\al^1)|\dots \right]
	\end{align}
	The signs $\star_i$ are those arising from permuting elements of $\al$ to obtain the expression for $H_i$, together with signs coming from passing $\m$ or $v$ through $\al$. For example: \begin{equation}
		\star_1 = (\epsilon_{s-2} + \epsilon_{s-1} + \epsilon_{s})(|\al_0|' + \epsilon_1 + \dots + \epsilon_{s-3}) + \epsilon_{s-2}|v|'.
	\end{equation} 
	Sheridan then proves: \begin{nthm}[{\cite[Theorem~B.2]{She}}]
		\label{thm: Sheridan homotopy}
		The homotopy $H$ satisfies: $$F_* \circ \nabla^\MC_v - \nabla^\MD_v \circ F_* = H_v \circ d^-_\MC + (-1)^{|v|}d^-_\MD \circ H_v.$$ Thus, $F_*: HC^-_*(\MC) \rightarrow HC^-_*(\MD)$ respects the Getzler-Gauss-Manin connection.
	\end{nthm}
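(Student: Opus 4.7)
The plan is to verify the stated identity at the chain level by a direct but intricate computation, organizing the expansion of $H_v \circ d^-_\MC + (-1)^{|v|}d^-_\MD \circ H_v$ by powers of $u$ and matching term-by-term against the decomposition of $F_* \circ \nabla^\MC_v - \nabla^\MD_v \circ F_*$. Recall that on negative cyclic chains $\nabla^\MC_v = v - \tfrac{1}{u}\iota_{v(\m_\MC)}$ (and similarly for $\MD$), so the left-hand side splits into a $u^0$-piece $[v, F_*]$, representing the failure of $F_*$ to commute with the derivation $v$, and a $u^{-1}$-piece $-\tfrac{1}{u}\bigl(F_* \circ \iota_{v(\m_\MC)} - \iota_{v(\m_\MD)} \circ F_*\bigr)$. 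The claim amounts to showing that $(H^2_v - H^1_v)/u$ is the chain homotopy for the $u^{-1}$-piece (where $b_\MC$ and $b_\MD$ contribute), while $H^3_v$ supplies the chain homotopy for the $u^0$-piece (where the Connes operator $B$ and the cyclic structure enter).

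The central input is the $v$-derivative of the $A_\infty$-functor equations. Differentiating the $A_\infty$-relations for $\MC$ and $\MD$, and the functor relations for $F$, yields identities that schematically equate $v(\m_\MD)\circ F^* + \m_\MD\circ v(F^*) + \ldots$ with $v(F^*)\circ \m_\MC + F^*\circ v(\m_\MC) + \ldots$; these are precisely the relations needed to collapse the terms produced by $b_\MD$ acting on $H^1_v$ and $H^2_v$ and $b_\MC$ acting from the right. After the telescoping cancellations, only the $\iota_{v(\m)}$-insertion survives, matching the $u^{-1}$-piece. For the $u^0$-piece, one pairs $B$ acting on $H^1_v, H^2_v$ with $b$ acting on $H^3_v$: the Connes operator rotates the cyclic word in exactly the way required to convert the boundary contributions from $H^3_v$ into the commutator $[v, F_*]$, via the cyclic invariance of the Hochschild chain complex.

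The main obstacle will be the sign and combinatorial bookkeeping. Each $H^i_v$ is a sum over partitions $P \in S_s[k]$ in which one distinguished slot carries $v(\m)$ or $v(F^*)$; when $b$ or $B$ acts, the output can absorb the distinguished slot, lie adjacent to it, or affect a neutral slot, and each case must be matched across $H^1_v, H^2_v, H^3_v$ and the connection terms with compatible Koszul signs. I expect the cleanest approach is to fix, for each surviving term on the left-hand side, a unique counterpart in the right-hand side that cancels it (after invoking the appropriate differentiated $A_\infty$-relation), and verify the signs $\star_1, \star_2, \star_3$ align position-by-position. The only place where genuine cleverness is required is the reinsertion of $\al_0$ in the rotated position $(\al^s, \al_0, \al^1)$, which reflects the action of $B$ and is essential for closing up the cyclic homotopy.
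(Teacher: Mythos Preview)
The paper does not supply its own proof of this statement: it is quoted verbatim from \cite[Theorem~B.2]{She}, with only the remark that Sheridan's setting has $|v|=0$ and that the general case follows by inserting the obvious Koszul signs. So there is no ``paper's proof'' to compare against beyond the citation; your proposal is effectively an attempt to reconstruct Sheridan's argument from scratch, and the strategy you outline---differentiate the $A_\infty$-functor relations, organize by powers of $u$, and match combinatorially---is indeed how Sheridan proceeds in his Appendix~B.

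Two cautions about the execution. First, your formula $\nabla^\MC_v = v - u^{-1}\iota_{v(\m_\MC)}$ is only correct if $\iota$ denotes the \emph{full} cap-type operation $b^{1|1} + uB^{1|1}$ in the noncommutative Cartan calculus; if you mean only the $b^{1|1}$-part, you are missing a piece of the connection, and the $u^0$-part of the left-hand side is then not just $[v,F_*]$ but also contains the $B^{1|1}$-insertion of $v(\m)$. Your later paragraph (``one pairs $B$ acting on $H^1_v,H^2_v$ with $b$ acting on $H^3_v$'') suggests you are aware of this cross-contribution, but the opening summary obscures it. Second, expanding $H_v\circ(b+uB)+(-1)^{|v|}(b+uB)\circ H_v$ produces a $u^{+1}$-term $[B,H^3_v]_{\pm}$ which must vanish; this follows from strict unitality (the output of $H^3_v$ begins with $\mathbf{e}$, and $B$ on such a chain is zero in the normalized complex, while $H^3_v\circ B$ vanishes because $F^*(\ldots,\mathbf{e},\ldots)=0$ for length $\geq 2$), but you should state and check it explicitly rather than leave it implicit.
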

\begin{remark}
	Sheridan works in a setting where $|v| = 0$ for all $v \in Der_\KK R$. The above result follows trivially by incorporating deformations of odd degree.
\end{remark}
We then define:
	\begin{defi}
		For $v \in Der_\KK R$ define the connection $\nabla^F_v: CC^-_*(F) \rightarrow u^{-1} CC^-_*(F)$ by:
		\[
		\nabla^F_v(\al,\beta) = (\nabla^\MC_v \al, (-1)^{|\al|}H_v(\al) + \nabla^\MD_v \beta ).
		\]
	\end{defi}
	\begin{remark}
		One can check from the definition that the homotopy $H_v$ satisfies $H_v(f\al) = (-1)^{|v|'|f|}f H_v(\al)$. This ensures $\nabla^F_v$ satisfies the Leibniz rule. 
	\end{remark}
\begin{nlemma}
	\label{lem: relative connection descends to cyclic homology}
	The relative connection satisfies $[\nabla_v^F, d^-_F] = 0$ and thus descends to a connection on relative cyclic homology.
\end{nlemma}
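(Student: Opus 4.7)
The plan is a direct component-wise computation of the graded commutator $[\nabla_v^F, d^-_F] = \nabla_v^F \circ d_F^- - (-1)^{|v|} d_F^- \circ \nabla_v^F$ on a generator $(\alpha, \beta) \in CC^-_*(\MC)[1] \oplus CC^-_*(\MD)$. Because $\nabla_v^F$ has been verified to satisfy the Leibniz rule, this commutator is $R$-linear, so its vanishing at the chain level gives a well-defined operator on $HC^-_*(F)$. Moreover, $d_F^-$ is $R[[u]]$-linear, so the whole computation extends to $u^{-1}CC^-_*(F)$ where $\nabla_v^F$ takes values.

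Unpacking the definitions, the first component of $[\nabla_v^F, d^-_F](\alpha,\beta)$ is simply $[\nabla_v^\MC, d_\MC^-](\alpha)$, which vanishes by the flatness of the absolute Getzler--Gauss--Manin connection on $CC^-_*(\MC)$. The second component splits naturally into a part involving only $\beta$, which collapses to $[\nabla_v^\MD, d_\MD^-](\beta)=0$ by the corresponding flatness on $CC^-_*(\MD)$, and a part involving $\alpha$ made of four terms: $H_v(d^-_\MC \alpha)$ and $d^-_\MD H_v(\alpha)$ arising from pairing the $H_v$-twist in $\nabla_v^F$ against $d_F^-$, and $\nabla^\MD_v F_*(\alpha)$ and $F_*(\nabla^\MC_v \alpha)$ arising from pairing the $F_*$-twist in $d_F^-$ against $\nabla_v^F$. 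Tracking the $(-1)^{|\alpha|}$ prefactors in the definitions of $d_F^-$ and $\nabla_v^F$ alongside the Koszul sign $(-1)^{|v|}$, these four terms recombine into exactly the two sides of Sheridan's identity (Theorem~B.2 in the excerpt),
\[ F_* \circ \nabla_v^\MC - \nabla_v^\MD \circ F_* \;=\; H_v \circ d_\MC^- + (-1)^{|v|} d_\MD^- \circ H_v, \]
and hence cancel. This establishes $[\nabla_v^F, d^-_F]=0$ on chains.

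The main obstacle is simply the sign bookkeeping, not a conceptual difficulty: conceptually, the twist $H_v$ in $\nabla_v^F$ was engineered as exactly the homotopy witnessing that $F_*$ intertwines $\nabla^\MC$ and $\nabla^\MD$, so that it cancels the commutator contribution of the $F_*$-twist in $d_F^-$. One subtlety to flag is that $\alpha$ is viewed inside the shifted complex $CC^-_*(\MC)[1]$, so its degree in the cone differs from its degree in $CC^-_*(\MC)$ by one; consistent use of the cone grading is what makes the $(-1)^{|\alpha|}$ signs in the definitions of $d_F^-$ and $\nabla_v^F$ conspire with Sheridan's sign to produce the exact cancellation.
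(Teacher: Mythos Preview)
Your approach is correct and essentially identical to the paper's: both compute $\nabla^F_v d^-_F$ and $d^-_F \nabla^F_v$ component-wise, observe that the diagonal terms vanish because the absolute connections $\nabla^\MC_v$, $\nabla^\MD_v$ commute with their respective differentials, and then cancel the cross terms using Sheridan's homotopy identity (Theorem~\ref{thm: Sheridan homotopy}). One terminological slip: when you say the first component vanishes ``by the flatness'' of the absolute connection, what you actually use is that $[\nabla^\MC_v, d^-_\MC]=0$, i.e.\ that the connection is a chain map and hence descends to homology---``flatness'' properly refers to vanishing curvature $[\nabla_v,\nabla_w]=\nabla_{[v,w]}$, which is a separate (and, for the relative connection, only conjectural) statement.
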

\begin{proof}
	We have: \[
	\nabla^F_v(d^-_F(\al,\beta)) = \left(\nabla_v^\MC(d_\MC^-\al), (-1)^{|\al|'}H_v(d_\MC^-(\al)) + (-1)^{|\al|}\nabla_v^\MD(F_*(\al)) + \nabla_v^\MD(d^-_\MD(\beta))\right).
	\]
	Furthermore, \[
	d^-_F(\nabla_v^F(\al,\beta)) = \left( d^-_\MC(\nabla_v^\MC(\al)), (-1)^{|\al|+|v|}F_*(\nabla_v^\MC(\al)) + (-1)^{|\al|}d^-_\MD(H_v(\al)) + d^-_\MD(\nabla_v^\MD(\beta))   \right).
	\]
	The result then follows from Theorem \ref{thm: Sheridan homotopy}, and the definition of the supercommutator: $[\nabla_v^F,d^-_F] = \nabla_v^F \circ d^-_F - (-1)^{|v|} d^-_F \circ \nabla_v^F$.
\end{proof}
We expect:
\begin{ncon}
	\label{con: relative connection is flat}
	The relative connection is flat, thus endowing $HC^-_*(F)$ with a T-structure.
\end{ncon}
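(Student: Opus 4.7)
The plan is to construct an explicit secondary chain homotopy $K_{v,w}$ that trivialises the curvature of $\nabla^F$ on the chain level, following the template of Sheridan's construction of $H$ in \cite{She}. The underlying principle is the standard fact that the mapping cone of a morphism of complexes-with-flat-connection inherits a flat connection precisely when the homotopy witnessing compatibility satisfies one further coherence up to a higher homotopy; here $H_v$ plays the role of the first homotopy and $K_{v,w}$ will be the second.

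First I would compute the curvature $R^F(v,w) := [\nabla^F_v, \nabla^F_w] - \nabla^F_{[v,w]}$ on a pair $(\al,\beta) \in CC^-_*(F)$. Since the absolute Getzler-Gauss-Manin connections on $\MC$ and $\MD$ are both flat, the contributions to $R^F(v,w)$ in the $\al$-slot and in the $\beta$-only part of the second slot vanish identically. What remains is an operator $CC^-_*(\MC) \to u^{-2}CC^-_*(\MD)$ built out of the Koszul-symmetrised combinations $\nabla^\MD_v H_w - (-1)^{|v||w|}\nabla^\MD_w H_v$ and $H_v \nabla^\MC_w - (-1)^{|v||w|}H_w \nabla^\MC_v$, together with $-H_{[v,w]}$. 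The task reduces to showing that this combination is $d^-_\MD$-exact via an explicit primitive.

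Next I would write down $K_{v,w}: CC^-_*(\MC) \to u^{-2} CC^-_*(\MD)$ of the appropriate parity, with summands indexed by two marked positions in a partition $P \in S_s[k]$: at the two marked slots one inserts the relevant pairs among $v(\m_\MC)$, $w(\m_\MC)$, $v(F^*)$, $w(F^*)$, and the second-order insertion $v(w(F^*))$, summed over all orderings compatible with the cyclic marker $\al^s, \al_0, \al^1$, and with the insertion on the outer $\m_\MD$ or $\bfe$ slot from Sheridan's $H^2, H^3$ promoted to two-fold insertions. The proposed formula is the natural quadratic extension of $H = \frac{H^2-H^1}{u}+H^3$, with analogous pieces of orders $u^{-2}$, $u^{-1}$, and $u^0$. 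The verification that $[d^-_\MD, K_{v,w}]$ equals the curvature expression is then a direct calculation using the $A_\infty$ relations for $\m_\MC, \m_\MD$, the $A_\infty$-functor equations for $F$ together with their first and second $v$- and $w$-derivatives, the defining identities of Connes' operator $B$, and the combinatorics of $S_s[k]$; essentially every term pairs with another via an $A_\infty$ relation, leaving exactly the curvature contributions plus a $[v,w]$-term that absorbs $-H_{[v,w]}$.

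The main obstacle is sign and combinatorial bookkeeping: the number of partition configurations roughly doubles relative to Sheridan's argument, and one encounters cases in which both marked insertions lie in the same cyclic block $(i:s)$, requiring a careful reordering with the Koszul sign rule that has no analogue in the flatness proof for the absolute $\nabla^{GGM}$. A secondary technical point is to verify that $K_{v,w}$ really lands in $u^{-2}CC^-_*(\MD)$ rather than a worse pole order, which should follow from the same degree-counting that makes Sheridan's $H$ land in $u^{-1}CC^-_*(\MD)$, applied twice. Once the chain-level identity is in hand, evaluating on $d^-_F$-cycles immediately shows that $R^F(v,w)$ vanishes on $HC^-_*(F)$, which is exactly the statement that $\nabla^F$ is flat and endows $HC^-_*(F)$ with a T-structure.
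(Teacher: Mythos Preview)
The paper does not prove this statement: it is explicitly stated as a conjecture, and the author writes immediately afterwards ``Unfortunately, we were not able to verify this.'' The only case actually established in the paper is Example~\ref{eg: relative cyclic chains}, where the homotopy $H$ vanishes identically and flatness of $\nabla^F$ reduces to flatness of $\nabla^\MC$ and $\nabla^\MD$ separately, which is trivial. For the remainder of the section the paper simply assumes the conjecture.

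Your proposal is the natural strategy, and indeed the only reasonable one: reduce the curvature to an off-diagonal term $CC^-_*(\MC)\to u^{-2}CC^-_*(\MD)$ and exhibit it as a $d^-$-coboundary via a secondary homotopy $K_{v,w}$ built from two-fold insertions. But what you have written is a plan, not a proof. The entire content lies in the step you describe as ``a direct calculation'' and whose obstacle you identify as ``sign and combinatorial bookkeeping'': you have not written down $K_{v,w}$ explicitly, and you have not verified the identity. Given that the author, who had Sheridan's $H$ in hand and presumably attempted exactly this, was unable to complete the verification, the honest assessment is that the proposal does not close the gap. In particular, the cases where both marked insertions lie in the same cyclic block, and the interaction between the $u^{-2}$, $u^{-1}$ and $u^0$ pieces under $B$, are exactly where one would expect the argument to become delicate or to fail, and you have not shown that they do not.

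If you want to claim a proof, you need to actually write out $K_{v,w}$ and carry through the cancellation; short of that, what you have is a restatement of why the conjecture is plausible.
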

Unfortunately, we were not able to verify this. For our main application, Example \ref{eg: relative cyclic chains}, the homotopy $H = 0$, and the conjecture follows readily. For the rest of this section, we will assume flatness has been proven.
\subsection{Euler-gradings}
Recall the definition of an Euler-graded $A_\infty$-category from \cite[Definition~3.18]{Hug}:
\begin{defi}
	\label{Euler-grading A infinity category}
	An Euler-grading on an $A_\infty$-category $\MC$ consists of an Euler vector field $E \in Der_\KK R$ of even degree and, for all objects $C_1, C_2 \in \MC$, even degree maps $Gr: hom_\MC(C_1,C_2) \rightarrow hom_\MC(C_1,C_2)$ such that \begin{equation}
		Gr \circ \m_k = \m_k \circ Gr + (2-k) \m_k.
	\end{equation}
	and \begin{equation}
		Gr(f\al) = 2E(f)\al + fGr(\al) \text{ for } f\in R \text{ and } \al \in hom_\MC(C_1,C_2). 
	\end{equation}
	Furthermore, we require that $Gr(\mathbf{e}) = 0$. An $R$-linear $A_\infty$-functor $F: \MC \rightarrow \MD$ is said to be Euler-graded if $E_\MC = E_\MD$ and $Gr_\MD \circ F^k = F^k \circ Gr_\MC + (1-k)F^k$.
\end{defi} 
The grading operator extends to cyclic homology. Consider $Gr$ as a length-1 Hochschild cochain. Then define the operator $Gr^{-}: CC^-_*(\MC) \rightarrow CC_*^-(\MC)$ by \begin{equation}
	Gr^{-} := \mathcal{L}_{Gr} + \Gamma + 2 u \frac{\partial}{\partial u},
\end{equation}
where $\Gamma(\alpha_0[\alpha_1|\dots|\alpha_k]) = -k \alpha_0[\alpha_1|\dots|\alpha_k]$ is the length operator on cyclic chains, and $\mathcal{L}$ is the curvature operator, introduced in \cite{Ge}, see  \cite[Equation~77]{Hug} for a definition in our setup.
 
Now let $\MC$, $\MD$ be Euler-graded, $R$-linear $A_\infty$-categories, and $F: \MC \rightarrow \MD$ an Euler-graded functor.
\begin{defi}
	\label{defi: grading shift}
Define the grading on the negative cyclic cone-complex by: \begin{align}
		Gr_F^-: CC^-_*(F) &\rightarrow CC_*^-(F)\\
		(\al,\beta) &\mapsto (Gr^-_\MC(\al) - \al, Gr_\MD^-(\beta)).
	\end{align}
\end{defi}
The extra term $-\al$ comes from the shift in grading in the cone complex. It also ensures that $Gr^-_F$ satisfies the correct relations:
\begin{nlemma}
	The grading satisfies $[Gr^-_F, d^-_F] = d^-_F$ and thus descends to a grading operator on $HC^-_*(F)$. This endows $HC^-_*(F)$ with an Euler-graded T-structure.
\end{nlemma}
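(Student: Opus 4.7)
The plan is to verify the chain-level identity $[Gr^-_F, d^-_F] = d^-_F$ componentwise on $(\alpha,\beta) \in CC^-_*(\MC)[1] \oplus CC^-_*(\MD)$; the statement that $Gr^-_F$ descends and endows $HC^-_*(F)$ with an Euler-graded T-structure then follows formally. The first ($\MC$) component of $[Gr^-_F, d^-_F](\alpha,\beta)$ reduces cleanly: since $Gr^-_F$ differs from $Gr^-_\MC$ by the scalar operator $-1$ and $d^-_F$ acts as $d^-_\MC$ on this slot, we obtain $[Gr^-_\MC - 1,\, d^-_\MC](\alpha) = [Gr^-_\MC, d^-_\MC](\alpha) = d^-_\MC(\alpha)$, invoking the absolute identity $[Gr^-_\MC, d^-_\MC] = d^-_\MC$ proved in \cite{Hug}.

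The second ($\MD$) component is the substantive part. Expanding, the terms involving only $Gr^-_\MD$ and $d^-_\MD$ yield $d^-_\MD(\beta)$ by the absolute identity on $\MD$, while the remaining terms collect into
\[
(-1)^{|\alpha|}\bigl(Gr^-_\MD \circ F_* - F_* \circ Gr^-_\MC\bigr)(\alpha) + (-1)^{|\alpha|} F_*(\alpha),
\]
where the $+F_*(\alpha)$ summand is supplied precisely by the $-\alpha$ shift in $Gr^-_F$. Matching the second component of $d^-_F(\alpha,\beta) = (d^-_\MC \alpha, d^-_\MD \beta + (-1)^{|\alpha|} F_*(\alpha))$ therefore reduces to the key identity
\[
Gr^-_\MD \circ F_* = F_* \circ Gr^-_\MC \quad \text{on } CC^-_*(\MC). \qquad (\star)
\]

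To prove $(\star)$, I would decompose $Gr^- = \mathcal{L}_{Gr} + \Gamma + 2u\tfrac{\partial}{\partial u}$. The term $2u\partial_u$ commutes with $F_*$ because $F_*$ is the $R[[u]]$-linear extension of an $R$-linear map. For the remaining terms I work one summand of $F_*(\alpha_0[\alpha_1|\cdots|\alpha_n])$ at a time, indexed by a partition $P \in S_s[n]$ producing a chain of the form $F^{k_0}(\alpha_0,\alpha^{(1)})[F^{k_1}(\alpha^{(2)})|\cdots|F^{k_{s-1}}(\alpha^{(s)})]$ with $\sum_i k_i = n+1$. Applying the Euler-graded functor condition $Gr_\MD \circ F^{k_i} = F^{k_i} \circ Gr_\MC + (1-k_i)F^{k_i}$ at each slot, summing over $i$, and using that $Gr$ is a length-$1$ cochain (so $\mathcal{L}_{Gr}$ just inserts $Gr$ at each argument), one finds
\[
F_*(\mathcal{L}_{Gr_\MC}(\alpha))\big|_P = \mathcal{L}_{Gr_\MD}(F_*(\alpha)|_P) + (n+1-s)\,F_*(\alpha)|_P.
\]
Separately, since the output chain has bar-length $s-1$, we have $[\Gamma, F_*](\alpha)|_P = (n - (s-1))\,F_*(\alpha)|_P = (n-s+1)\,F_*(\alpha)|_P$. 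The two corrections cancel exactly, establishing $(\star)$ partition-by-partition and hence globally.

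The main obstacle is combinatorial bookkeeping: tracking Koszul signs when pulling $Gr$ past the entries $\alpha_i$ and distributing it across the inputs of each $F^{k_i}$. Because $Gr$ is even this is benign, but it must be done carefully to ensure that the Euler-graded functor condition combines correctly across all slots. Finally, to upgrade the result from a graded chain complex to an Euler-graded T-structure one checks the standard compatibility between $Gr^-_F$ and $\nabla^F_v$, which follows componentwise from the analogous compatibility for $Gr^-_\MC, \nabla^\MC_v$ and $Gr^-_\MD, \nabla^\MD_v$ together with $(\star)$ applied to the homotopy $H_v$ from Theorem \ref{thm: Sheridan homotopy}.
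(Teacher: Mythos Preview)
Your treatment of the chain-level identity $[Gr^-_F, d^-_F] = d^-_F$ is correct and in fact more explicit than the paper's, which simply asserts that this ``follows as the functor $F$ is Euler-graded''. Your identity $(\star)$ and its partition-by-partition verification is precisely the content behind that one-line remark.

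There is, however, a genuine gap in your argument for the Euler-graded T-structure. Expanding $[Gr^-_F, \nabla^F_v]$ in the second component and using the absolute identity on $\MD$ gives
\[
(-1)^{|\al|}\bigl(Gr^-_\MD \circ H_v - H_v \circ Gr^-_\MC + H_v\bigr)(\al) + \nabla^\MD_{[2E,v]}(\beta),
\]
so what must actually be shown is
\[
[Gr^-, H_v] := Gr^-_\MD \circ H_v - H_v \circ Gr^-_\MC = H_{[2E,v]} - H_v.
\]
This is \emph{not} an instance of $(\star)$. The homotopy $H_v$ is built from $v(\m_\MC)$ and $v(F^*)$, and commuting $Gr^-$ past these produces the new terms $[2E,v](\m_\MC)$ and $[2E,v](F^*)$ rather than zero; these reassemble into $H_{[2E,v]}$, not back into $H_v$. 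The paper isolates this as a separate lemma and proves it by the same style of partition bookkeeping you used for $(\star)$, but the computation is independent and the target identity is different. Your final sentence, that the compatibility ``follows componentwise \ldots\ together with $(\star)$ applied to the homotopy $H_v$'', does not capture this: you have not identified the correct commutation relation for $H_v$, and $(\star)$ by itself does not supply it.
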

\begin{proof}
	The first statement follows as the functor $F$ is Euler-graded. To show that the resulting T-structure is Euler-graded, we need to show: \begin{equation}
		[Gr^-_F,\nabla^F_v] = \nabla^F_{[2E,v]}.
	\end{equation}
	We compute: \begin{equation}
		[Gr^-_F,\nabla^F_v](\al,\beta) = \left(\nabla^\MC_{[2E,v]}(\al),(-1)^{|\al|}\left([Gr^-,H_v](\al) + H_v(\al)\right) +\nabla^\MD_{[2E,v]}(\beta) \right).
	\end{equation}
	The result then follows from the following lemma.
\end{proof}
\begin{nlemma}
	The homotopy $H$ satisfies: $[Gr^-, H_v] = H_{[2E,v]} - H_v$.
\end{nlemma}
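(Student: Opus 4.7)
The plan is to verify the identity directly by decomposing $Gr^{-} = \mathcal{L}_{Gr} + \Gamma + 2u\partial_u$ and expanding $H_v = (H^2_v - H^1_v)/u + H^3_v$, and then matching contributions piece by piece. The $2u\partial_u$ commutator is immediate: since $H^1_v$ and $H^2_v$ enter with a prefactor of $u^{-1}$ while $H^3_v$ is $u$-independent, one obtains $[2u\partial_u, H_v] = -2(H^2_v - H^1_v)/u$. The $\Gamma$ commutator, on each summand, contributes the difference between the input bar length and the output bar length of that summand. The hard work is in the $\mathcal{L}_{Gr}$ commutator.

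For $\mathcal{L}_{Gr}$, use that it acts as a derivation on Hochschild cochains. Its action on any summand of $H^i_v$ splits via the Leibniz rule into an action on each input slot $\al_j$ and an action on the structure constants $\mathfrak{m}_k$, $F^k$, $v(\mathfrak{m}_k)$ and $v(F^k)$; the former cancels between the two sides of the commutator with $H_v$, leaving only the latter. The Euler-grading hypotheses imply, at the cochain level,
\begin{equation*}
\mathcal{L}_{Gr}(\mathfrak{m}_k) = (2-k)\,\mathfrak{m}_k, \qquad \mathcal{L}_{Gr}(F^k) = (1-k)\,F^k,
\end{equation*}
and differentiating these along $v \in Der_\KK R$ yields
\begin{equation*}
\mathcal{L}_{Gr}(v(\mathfrak{m}_k)) = [2E,v](\mathfrak{m}_k) + (2-k)\,v(\mathfrak{m}_k), \qquad \mathcal{L}_{Gr}(v(F^k)) = [2E,v](F^k) + (1-k)\,v(F^k).
\end{equation*}
Substituting into Sheridan's expressions, the $[2E,v](\mathfrak{m}_k)$ and $[2E,v](F^k)$ contributions reassemble exactly into $H_{[2E,v]}$, accounting for the first term on the right-hand side of the claimed identity.

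It remains to show that the surviving contributions --- the weights $(2-k)$ and $(1-k)$ attached to every structure constant, the length shift from $\Gamma$, and the $\pm 2/u$ from $2u\partial_u$ on the $u^{-1}$-prefactors --- combine to precisely $-H_v$. The cleanest way to organize this is via a total Euler weight, assigning weight $2-k$ to each $\mathfrak{m}_k$, weight $1-k$ to each $F^k$, weight $2$ to $u$, and weight $-1$ per bar factor of the output (inputs contribute $+1$ through the commutator with $\Gamma$). A direct count in each of $H^1_v, H^2_v, H^3_v$, using that across all structure constants the total number of inputs consumed equals $k+1$ (the total length of the input cyclic chain), shows that every summand carries total Euler weight $-1$. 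This is precisely the coefficient of $-H_v$ on the right-hand side.

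The main obstacle is exactly this weight-matching: for each of the three pieces $H^1_v, H^2_v, H^3_v$, one must verify that the output bar length determined by a partition $P \in S_s[k]$ and a placement of the distinguished $v$-factor conspires with the $(2-k)$ and $(1-k)$ weights so that the total Euler weight equals $-1$ uniformly. This reduces to a short arithmetic check on each of the three cases: for $H^3_v$, with $N$ functor factors and total inputs $k+1$, the sum of the $F$-weights is $N-k-1$ and the $\Gamma$-shift is $k-N$, summing to $-1$; the counts for $H^1_v$ and $H^2_v$ include the additional $-2$ from the $2u\partial_u$ commutator with $u^{-1}$, absorbed by the $(2-k_m)$ weight of the unique $\mathfrak{m}$ factor and the corresponding partition arithmetic.
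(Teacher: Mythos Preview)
Your proposal is correct and follows essentially the same approach as the paper: decompose $Gr^- = \mathcal{L}_{Gr} + \Gamma + 2u\partial_u$, use the Euler-grading axioms $[Gr,\mathfrak{m}_k] = (2-k)\mathfrak{m}_k$ and $[Gr,F^k] = (1-k)F^k$ together with their $v$-derivatives, compute the $\Gamma$-commutator as a length shift, and verify the resulting numerical coefficient on each summand. The paper carries out this bookkeeping explicitly for $H^1_v$ and asserts the other cases are analogous, whereas you package the same arithmetic uniformly as a total Euler weight equal to $-1$; the content is the same.
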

\begin{proof}
	Comparing powers of $u$, it is sufficient to show: \begin{align}
		[Gr^-, H_v^i] &= H^i_{[2E,v]} + H^i_v \text{ for } i = 1,2\\
		[Gr^-, H_v^3] &= H^3_{[2E,v]} - H^3_v
	\end{align}
We will show this holds for $H^1$, the others follow from a similar computation. For $\al = \al_0[\al_1, \dots, \al_k]$, we have: \begin{multline}
	\label{eq: H respects grading}
	[Gr^-,H^1_v](\al) = [\Gamma, H^1_v](\al) + \sum_{\substack{s \leq k \\ P \in S_s[k]}} (-1)^{\star_1} [Gr,F^*]\left(\al^{s-2},v(\m_\MC)(\al^{s-1}),\al^s, \al_0, \al^1 \right)[F^*(\al^2)|\dots|F^*(\al^{s-3})]\\
	+\sum_{\substack{s \leq k \\ P \in S_s[k]}} (-1)^{\star_1} F^*\left(\al^{s-2},v(\m_\MC)(\al^{s-1}),\al^s, \al_0, \al^1 \right)[F^*(\al^2)|\dots|[Gr,F^*](\dots)|\dots|F^*(\al^{s-3})]
\end{multline}
First note that: \begin{equation}
	[\Gamma, H^1_v](\al) = \sum_{\substack{s \leq k \\ P \in S_s[k]}} (-1)^{\star_1} (k-s+4) F^*\left(\al^{s-2},v(\m_\MC)(\al^{s-1}),\al^s, \al_0, \al^1 \right)[F^*(\al^2)|\dots|F^*(\al^{s-3})].
\end{equation}
The second term satisfies: \begin{multline}
	\sum_{\substack{s \leq k \\ P \in S_s[k]}} (-1)^{\star_1} [Gr,F^*]\left(\al^{s-2},v(\m_\MC)(\al^{s-1}),\al^s, \al_0, \al^1 \right)[F^*(\al^2)|\dots|F^*(\al^{s-3})]\\
	= \sum_{\substack{s \leq k \\ P \in S_s[k]}} (-1)^{\star_1} (1- (k_{s-2} + 1 + k_s+ 1 + k_1)) F^*\left(\al^{s-2},v(\m_\MC)(\al^{s-1}),\al^s, \al_0, \al^1 \right)[F^*(\al^2)|\dots|F^*(\al^{s-3})]\\
	+ \sum_{\substack{s \leq k \\ P \in S_s[k]}} (-1)^{\star_1} F^*\left(\al^{s-2},[Gr,v(\m_\MC)](\al^{s-1}),\al^s, \al_0, \al^1 \right)[F^*(\al^2)|\dots|F^*(\al^{s-3})].
\end{multline}
Now note that:\begin{align}
	[Gr,v(\m_\MC)](\al^{s-1}) &= [Gr,v](\m_\MC)(\al^{s-1}) +v([Gr,\m_\MC])(\al^{s-1})\notag \\
	&= [2E,v](\m_\MC)(\al^{s-1}) + (2-k_{s-1})v(\m_{\MC})(\al^{s-1}).
\end{align}
Finally, the third term of \eqref{eq: H respects grading} satisfies: \begin{multline}
	\sum_{\substack{s \leq k \\ P \in S_s[k]}} (-1)^{\star_1} F^*\left(\al^{s-2},v(\m_\MC)(\al^{s-1}),\al^s, \al_0, \al^1 \right)[F^*(\al^2)|\dots|[Gr,F^*](\dots)|\dots|F^*(\al^{s-3})]\\
	= \sum_{\substack{s \leq k \\ P \in S_s[k]}} (-1)^{\star_1} \left(\sum_{i=2}^{s-3} (1-k_i)\right) F^*\left(\al^{s-2},v(\m_\MC)(\al^{s-1}),\al^s, \al_0, \al^1 \right)[F^*(\al^2)|\dots|F^*(\al^{s-3})].
\end{multline}
Combining all the above equations shows the required result.
	\end{proof}
\subsection{The canonical relative connection}
As shown in \cite[Lemma~3.21]{Hug}, associated to any $R$-linear $A_\infty$-category $\MC$ is an $R[s,s^{-1}]$-linear Euler-graded category $\MC^s$, with $\MC^s \otimes_{R[s,s^{-1}]} R = \MC$. Here $R$ becomes an $R[s,s^{-1}]$-module under the map $s \mapsto 1$. Any functor $F: \MC \rightarrow \MD$ canonically determines an Euler-graded functor $F^s: \MC^s \rightarrow \MD^s$. The construction from the previous section then equips $HC^-_*(F^s)$ with an Euler-graded T-structure. As noted in \cite[Definition~2.6]{Hug} this gives rise to a natural connection $\nabla^{F^s}_{\partial_u} = \frac{Gr^-_{F^s}}{2u} - u^{-1}\nabla^{F^s}_E$. By restricting to $s=1$, we obtain a natural connection $\nabla^{F}_{\partial_u}$ on $HC^-_*(F)$. Call this the canonical relative connection. 

\begin{eg}
	\label{eg: relative cyclic chains}
	Let $\mathcal{C} = \mathcal{K}$, the $A_{\infty}$-category with one object, $\star$, and $Hom_{\MK}(\star,\star) = \mathbb{K}$. There is an isomorphism of E-structures $CC^-_*(\MK) \cong HC_*^-(\mathcal{K}) \cong (\mathbb{K}[[u]], \frac{d}{du})$.
	Let $\MD$ be any uncurved $A_\infty$-category. A functor $F: \MK \rightarrow \MD$ is then given by $F(\star) = L$ for some object $L \in ob\MD$ and on morphisms: $F(e) = e_L \in hom(L,L)$. In this case, we denote the relative cyclic chain complex by $CC^-_*(\MD,L)$, with associated relative cyclic homology $HC^-_*(\MD,L)$. Now, the functor $F$ respects the connection on the chain level, and the homotopy $H$ vanishes, which shows that Conjecture \ref{con: relative connection is flat} holds. On the chain level, the canonical relative connection thus takes the particularly simple form: \begin{equation}
		\nabla^{L}_{\frac{d}{du}}(a,\alpha) = ( \frac{da}{du} -\frac{a}{2u}, \nabla^{\MD}_{\frac{d}{du}} \alpha) \text{ for } (a,\al) \in \mathbb{K}[[u]] \oplus CC^-_*(\MD). 
	\end{equation}
	The `extra' term $\frac{a}{2u}$ comes from the shift in grading on the cone complex, see Definition \ref{defi: grading shift}. Note also that even though the connection splits at the chain level, this does not necessarily mean that the relative cyclic homology splits as an E-structure. Instead, we get the following long exact sequence of E-structures:
	\begin{equation}
	\cdots \rightarrow \mathbb{K}[[u]] \xrightarrow{\mathbf{e}_L} HC^-_{*}(\MD) \rightarrow HC_*^-(\MD,L) \rightarrow \cdots
	\end{equation}
	Now assume additionally that $\lbrack \mathbf{e}_L \rbrack = 0 \in HC^-_*(\MD)$. Then, the long exact sequence breaks up into the short exact sequence: \begin{equation}
		\label{SES}
	0 \rightarrow HC_*^-(\MD) \rightarrow HC_*^-(\MD,L) \rightarrow \mathbb{K}[[u]]_{*+1} \rightarrow 0.
\end{equation}
	In particular there is a short exact sequence of E-structures:
	\begin{equation}
	0 \rightarrow HC_{odd}^-(\MD) \rightarrow HC_{odd}^-(\MD,L) \rightarrow \mathbb{K}[[u]] \rightarrow 0.
\end{equation}
	And an isomorphism: \begin{equation}
	HC_{even}^{-}(\MD) \cong HC_{even}^{-}(\MD,L).
\end{equation}
\end{eg}
\begin{eg}
	We can still make the above construction work if the object $L$ in $\MD$ is weakly curved. Suppose that $L$ has curvature $\m_0^L = c \cdot \mathbf{e}_L$, for some $c \in \KK$. Then, consider the category $\mathcal{K}$ to be equipped with the curvature $\m_0^\MK = c \cdot 1$. The connection on $HC^-_*(\MK) = \mathbb{K}[[u]]$ is then given by $\nabla^{\MK}_{\frac{d}{du}} = \frac{d}{du} + \frac{c}{2u^2}$. As \begin{equation}
		\nabla_{\frac{d}{du}} \bfe_L = \frac{c}{u^2}\bfe_L
	\end{equation}
	the map $F_*: CC^-_*(\MK) \rightarrow HC^-_*(\MC)$ respects connections at the chain level. This is necessary, as we find that the homotopy $H = 0$ in this case. The relative $u$-connection can thus be expressed on the chain level as:
	\begin{equation}
	\nabla^{L}_{\frac{d}{du}}(a,\alpha) = ( \frac{da}{du} -\frac{a}{2u} + \frac{c a}{u^2} , \nabla^{\MD}_{\frac{d}{du}} \alpha).
	\end{equation}	
	The exact sequences from before then still hold.
\end{eg}
\begin{eg}
	Now consider the general case of an $R$-linear $A_\infty$-category $\MD$, and an object $L$ with curvature $\m_0^L = c \cdot \mathbf{e}_L$, for some $c \in R$. As the curvature on $\MK$ is $c \cdot 1$, the connection on $HC^-_*(\MK) = \mathbb{K}[[u]]$ is given by $\nabla^{\MK}_{v}(a) = v(a) - \frac{v(c)}{u}a$. The relative Getzler-Gauss-Manin connection can be expressed on the chain level as:
	\begin{equation}
		\nabla^L_v(a,\alpha) = (v(a) - \frac{v(c)}{u}a, \nabla^{\MD}_v\alpha).
	\end{equation}
\end{eg}
	\section{Technical setup}
	\label{section: technical setup}
	\subsection{Regularity assumptions}
	Let $X$ be a $2n$-dimensional symplectic manifold and $J$ be an $\omega$-tame almost complex structure on $X$. Let $L \subset X$ be an oriented Lagrangian equipped with a relative spin structure $\mathfrak{s}$. For us a relative spin structure comes with a choice of element $w_{\mathfrak{s}} \in H^2(X;\ZZ/2)$ such that $w_{\mathfrak{s}}|_L = w_2(TL) \in H^2(L;\ZZ/2)$.
	
	For $l \geq 0$, let $\mathcal{M}_{l+1}(\beta)$ be the moduli space of stable $J$-holomorphic spheres with $l+1$ marked points in homology class $\beta \in H_2(X,\mathbb{Z})$. Let \begin{equation}
		ev_j^\beta: \mathcal{M}_{l+1}(\beta) \rightarrow X
	\end{equation}
	be the evaluation map at the $j$'th marked point.
	For $k\geq -1$, $l\geq 0$, let $\mathcal{M}_{k+1,l}(\beta)$ be the moduli space of $J$-holomorphic stable maps $(\mathbb{D},S^1) \rightarrow (X,L)$ in homology class $\beta \in H_2(X,L)$ with one boundary component, $k+1$ anti-clockwise ordered boundary marked points, and $l$ interior marked points. Let \begin{equation}
		evb_i^\beta: \mathcal{M}_{k+1,l}(\beta) \rightarrow L \text{ and } evi_j^\beta: \mathcal{M}_{k+1,l}(\beta) \rightarrow X
	\end{equation}
	be the evaluation maps at the $i$'th boundary and $j$'th interior marked points respectively. The relative spin structure determines an orientation on the moduli spaces $\mathcal{M}_{k+1,l}(\beta)$, see \cite[Chapter~8]{FOOO}.
	
	We will also need a moduli space of disks with a horocyclic constraint. Recall that a horocycle in a disk is given by a circle tangent to the boundary. These moduli spaces are similar to the ones used in \cite[Chapter~3]{ST2}, where some of the marked points are constrained to lie on a geodesic in $\mathbb{D}$. Our definition is entirely analogous, except that we replace `geodesic' with `horocycle'. Let the smooth locus of $\mathcal{M}_{k+1,l; \perp_i}(\beta) \subset \mathcal{M}_{k+1,l}(\beta)$ be the subset defined by requiring the first and second interior marked points $w_1$ and $w_2$ to lie at $-t$ and $t$ respectively for $t \in (-1,1)$ and fixing the i'th boundary point $z_i$ at $-i$. Equivalently, we require that $z_i$, $w_1$, $w_2$ lie on a horocycle in anti-clockwise ordering. This moduli space also appeared in \cite{Ga12}, where it was used to show that the closed-open map is an algebra homomorphism.
	
	We now give a more formal definition of the moduli space $\mathcal{M}_{k+1,l; \perp_i}(\beta)$ as a fibre product of known spaces. Consider the forgetful map $\mathcal{M}_{k+1,l}(\beta) \rightarrow \MM_{1,2} = D^2$, only remembering the i'th boundary marked point, and the first two interior marked points. Here the identification $\MM_{1,2} \cong D^2$ is achieved by using an automorphism of the disk to map the boundary marked point to $-i$, and the first interior marked point to $0$. Consider the inclusion $I \hookrightarrow D^2$ given by the arc of the horocycle through $-i$ and $0$ with negative real part. This is a circle of radius $\frac{1}{2}$ centred at $-\frac{i}{2}$. The condition on the order of the marked points means that second interior point lies on the semi-circle with negative real part. We then define: \begin{equation}
		\mathcal{M}_{k+1,l; \perp_0}(\beta) = I \times_{D^2} \mathcal{M}_{k+1,l}(\beta).
	\end{equation}
	
	Take the orientation on $I$ to be the positive orientation, so that $\partial I = \{1\} -\{ 0\}$. The orientation on $\mathcal{M}_{k+1,l; \perp_0}(\beta)$ is then defined by the fibre-product orientation, as in \cite[Section 2.2]{ST4}.

	We assume the following:
	\begin{assumptions}
		$\text{ }$
		\label{assumptions}
		\begin{enumerate}
			\item $\mathcal{M}_{l+1}(\beta)$ is a smooth orbifold with corners.
			\label{assumptions 1}
			\item $ev_0$ is a proper submersion.
			\label{assumptions 2}
			\item $\mathcal{M}_{k+1,l}(\beta)$ is a smooth orbifold with corners.
			\label{assumptions 3}
			\item $evb_0^\beta$ is a proper submersion.
			\label{assumptions 4}
			\item $\mathcal{M}_{k+1,l; \perp_i}(X,\beta)$ is a smooth orbifold with corners.
			\label{assumptions 5}
			\item $evb_0^\beta|_{\mathcal{M}_{k+1,l; \perp_i}(X,\beta)}$ is a proper submersion.
			\label{assumptions 6}
		\end{enumerate}
	\end{assumptions}
	As was noted in \cite[Section~1.3.12]{ST2} for assumptions 1-4, and in \cite[Lemma~4.19]{Hug} for assumptions 5-6, we have:
	\begin{nlemma}
		\label{G action assumptions hold}
		The above assumptions hold for $L \subset X$ a Lagrangian and a complex structure $J$ with the following properties:
		\begin{itemize}
			\item $J$ is integrable.
			\item There exists a Lie group $G_X$ acting J-holomorphically and transitively on $X$.
			\item There exist a Lie subgroup $G_L \subset G_X$ whose action restricts to a transitive action on $L$.
		\end{itemize}
	\end{nlemma}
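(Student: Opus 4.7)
The plan is to exploit the transitive $G_X$-action to obtain smoothness, properness and the submersion property simultaneously, following the strategy of Solomon--Tukachinsky \cite{ST2} for assumptions 1--4 and of \cite{Hug} for assumptions 5--6. The key observation is that since $J$ is integrable and $G_X$ acts $J$-holomorphically, the action $g \cdot u := g \circ u$ by post-composition is well-defined on each moduli space of stable holomorphic maps, commutes with stabilisation, and is equivariant for every evaluation map.

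For assumptions 1--2, the $G_X$-action on $\mathcal{M}_{l+1}(\beta)$ gives, via the infinitesimal action, a family of holomorphic sections of $u^*TX$ spanning $T_{u(z)}X$ at every point $z$. This forces surjectivity of the linearised $\bar\partial$-operator on any stable map (automatic regularity), so $\mathcal{M}_{l+1}(\beta)$ is a smooth orbifold with corners. Properness of $ev_0$ follows from Gromov compactness, and the submersion property from the fact that $ev_0$ is $G_X$-equivariant with $G_X$ acting transitively on $X$. For assumptions 3--4, the same argument applies with $G_L \subset G_X$ acting on $\mathcal{M}_{k+1,l}(\beta)$: the subgroup preserves $L$ and therefore the totally real boundary condition, the infinitesimal $G_L$-action spans $T_xL$ at boundary marked points while the ambient $G_X$-action spans $T_xX$ at interior marked points (yielding surjectivity of the linearised Cauchy--Riemann operator), and $evb_0$ is equivariant for a transitive $G_L$-action on $L$.

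For assumptions 5--6 on the horocyclic moduli spaces, write $\mathcal{M}_{k+1,l;\perp_i}(\beta) = I \times_{D^2} \mathcal{M}_{k+1,l}(\beta)$ as a fibre product. Smoothness reduces to verifying that the forgetful map $\mathcal{M}_{k+1,l}(\beta) \to \MM_{1,1}(\beta) \cong D^2$ is transverse to the horocycle inclusion $I \hookrightarrow D^2$. This holds because once at least one extra marked point is present beyond the two being remembered, one may vary its position in the domain freely, making the forgetful map a submersion. Finally, since the $G_L$-action on $\mathcal{M}_{k+1,l}(\beta)$ only modifies the target and not the domain configuration, it preserves the horocyclic constraint, so $G_L$ acts on $\mathcal{M}_{k+1,l;\perp_i}(\beta)$ with $evb_0$ equivariant, and transitivity of the $G_L$-action on $L$ gives the submersion property on the restriction.

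The main technical obstacle is the transversality of the forgetful map to the horocycle $I$ in the fibre product defining $\mathcal{M}_{k+1,l;\perp_i}(\beta)$: while interior to the moduli space this is a standard consequence of the freedom to reposition extra marked points, one must also verify that the corner structure induced by nodal degenerations of stable maps and by the horocycle hitting $\partial I = \{0,1\}$ is compatible, so that the fibre product is indeed an orbifold with corners. This is handled exactly as in \cite[Lemma~4.19]{Hug}, where the relevant boundary strata are analysed explicitly.
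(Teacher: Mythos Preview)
Your proposal is consistent with the paper's treatment: the paper does not give an independent proof of this lemma at all, but simply records that assumptions 1--4 were verified in \cite[Section~1.3.12]{ST2} and assumptions 5--6 in \cite[Lemma~4.19]{Hug}, which are precisely the references you invoke. Your sketch expands on what those references contain---automatic regularity from the transitive holomorphic action, submersion of evaluation maps from equivariance, and for the horocyclic spaces the fibre-product description with transversality of the forgetful map---and is in line with their arguments.

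One minor point: your explanation of regularity for closed curves (``holomorphic sections of $u^*TX$ spanning $T_{u(z)}X$ \ldots forces surjectivity of the linearised $\bar\partial$-operator'') is slightly imprecise as stated. The standard argument is that transitivity of the $J$-holomorphic $G_X$-action makes $TX$ globally generated by holomorphic vector fields, hence $u^*TX$ is a globally generated holomorphic bundle on each irreducible component, so it has non-negative summands and $H^1$ vanishes; the disk case uses the analogous argument with the real boundary condition coming from $G_L$. This is a cosmetic issue rather than a gap, and the paper itself defers all of this to the cited works.
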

\begin{eg}
	The simplest example is $(\mathbb{CP}^n, T_{cl})$, where $T_{cl}$ denotes the Clifford torus. Here the groups acting are $G_X = SU(n+1)$ and $G_L = T^{n}$. Other examples (see {\cite[Example~1.5]{ST3}}) are $(\mathbb{CP}^n, \mathbb{RP}^n)$, or more generally flag varieties and Grassmannians with $L$ being the real locus. In these cases $G_X$ is a complex matrix group and $G_L$ the real subgroup. Another class of examples are the quadric hypersurfaces with real locus $S^n$: \begin{equation}
		X_{2,n} = \big\{ \sum_{i=0}^n z_i^2 = z_{n+1}^2 \big\} \subset \mathbb{CP}^{n+1}.
	\end{equation}
	In this case, $G_X = SO(n+1,1, \CC)$ and $G_L = SO(n+1,1, \RR)$, with the groups acting in the obvious manner. 
\end{eg}
	\subsection{The $\q$-operations}
	First we introduce some coefficient rings. Consider the Novikov ring: \begin{align}
		\Lambda &= \left\{  \sum_{i = 0}^{\infty} a_iQ^{\lambda_i} | a_i \in \mathbb{C}, \; \lambda_i \in \mathbb{R}_{\geq 0},\; \lim_{i\to \infty } \lambda_i = \infty \right\}.
	\end{align}
	To account for gradings, we will also make use of the `universal Novikov ring': $\Lambda^{e} := \Lambda[e,e^{-1}]$, where $e$ has degree $2$.
	\begin{remark}
		A lot of our work is based on \cite{ST3}. They use a different Novikov ring, more commonly used in Gromov-Witten theory. Instead of taking series in $Q^{\mathbb{R}}$ they take series with terms $T^{\beta}$ for $\beta \in H_2(X,L)$. For them the monomial $T^\beta$ has degree $\mu(\beta)$, where $\mu: H_2(X,L) \rightarrow \mathbb{Z}$ is the Maslov index. The graded map $T^{\beta} \mapsto Q^{\omega(\beta)}e^{\mu(\beta)/2}$, allows us to compare their Novikov ring with the universal Novikov ring $\Lambda^e$. Note that $\mu(\beta) \in 2\mathbb{Z}$ as we assume our Lagrangian is orientable.
	\end{remark}

	Let $L \subset X$ be a Lagrangian satisfying assumptions \ref{assumptions}. Suppose $L$ is equipped with a $\Lambda^*$-local system, with monodromy representation: \begin{equation} hol: H_1(L,\mathbb{Z}) \rightarrow \Lambda^*. \end{equation}
	
	Let $A^*(L)$ denote differential forms on $L$ with coefficients in $\mathbb{C}$, and similarly for $X$.
	For $\al \in A^*(L)$, let $|\al|$ denote its degree as a differential form, and similarly for differential forms on $X$. Also, let $|\al|' := |\al| - 1$, and for an ordered set $\al = (\al_1, \dots, \al_k)$, write $\epsilon(\al) := \sum_i |\al_i|' \in \mathbb{Z}$.
	
	For $k,l \geq 0$ and $\beta \in H_2(X,L)$ with $(k,l,\beta) \notin \lbrace (1,0, \beta_0), (0,0,\beta_0) \rbrace$,\cite{ST3} define operations: \begin{equation}
		\mathfrak{q}_{k,l}^{ST,\beta}:A^*(L)^{\otimes k} \otimes A^*(X)^{\otimes l} \rightarrow A^*(L).
	\end{equation}
	We extend their definition to take into account the local system and set: \begin{equation}
		\mathfrak{q}_{k,l}^{ST,\beta}(\alpha_1 \otimes \dots \otimes \alpha_k; \gamma_1 \otimes \dots \otimes \gamma_l) := (-1)^{\zeta(\alpha)} hol(\partial \beta) (evb_0^\beta)_* \bigg{(} \bigwedge_{j=1}^l (evi^\beta_j)^* \gamma_j \wedge \bigwedge_{i=1}^k (evb_i^\beta)^* \alpha_j \bigg{)}. 
	\end{equation}
	Here $\zeta(\alpha) = 1 + \sum_{j=1}^{k} j|\alpha_j|'$. The special cases are as follows: 
	\begin{align}
		&\mathfrak{q}_{0,0}^{ST,\beta} := - (evb_0^\beta)_* 1 \in A^*(L),\\
		&\mathfrak{q}_{1,0}^{ST,\beta_0} := d\alpha,\\
		&\mathfrak{q}_{0,0}^{ST,\beta_0} := 0.
	\end{align}
	
	For the cleanest staments, we will use a sign convention differing from \cite{ST3}.
	\begin{defi}
		Let the operations $\q_{k,l}^\beta: A^*(L)^{\otimes k} \otimes A^*(X)^{\otimes l} \rightarrow A^*(L;\Lambda)$ be defined by: \begin{equation}
			\q_{k,l}^\beta(\al_1, \dots, \al_k;\ga_1, \dots, \ga_l) = (-1)^{\dagger(\al) + k-1} \q_{k,l}^{ST,\beta}(\al_k, \dots, \al_1;\ga_1, \dots, \ga_l).
		\end{equation}
		Here, for $\al = (\al_1, \dots, \al_k)$, we set $\dagger(\al) = \sum_{1 \leq i <j \leq k} |\al_i|'|\al_j|'$. This is the sign coming from reversing the order of $\al$. 
	\end{defi}
	
	\cite{ST2} also define closed operations:
	\begin{equation}
		\mathfrak{q}_{\emptyset, l}^{ST,\beta}: A^*(X)^{\otimes l} \rightarrow A^*(X),
	\end{equation}
	by
	\begin{equation}
		\mathfrak{q}_{\emptyset, l}^{ST,\beta}(\gamma_1, \dots, \gamma_l) := (-1)^{w_{\mathfrak{s}}(\beta)} (ev_0^\beta)_*(\bigwedge_{j=1}^l (ev_j^\beta)^* \gamma_j),
	\end{equation}
	with special cases: \begin{equation}
		\mathfrak{q}_{\emptyset, 1}^{ST,\beta_0} := 0, \; \mathfrak{q}_{\emptyset, 0}^{ST, \beta_0} := 0.
	\end{equation}
	We use these operations, without any sign change, so that $\q_{\emptyset,l} = \q^{ST}_{\emptyset,l}$. 
	
	We also recall from \cite{Hug} the definition of operations coming from the moduli spaces with horocyclic constraints $\MM_{k+1,l,\perp_i}(\beta)$. We first define these using sign conventions similar to \cite{ST3}.
	\begin{defi}
		Let $\mathfrak{q}^{ST,\beta}_{k,l;\perp_i}: A^*(L)^{\otimes k} \otimes A^*(X)^{\otimes l} \rightarrow A^*(L;\Lambda),$
		be defined by 
		\begin{equation}
			\mathfrak{q}^{ST,\beta}_{k,l;\perp_i}(\alpha_1, \dots, \alpha_k;\gamma_1, \dots, \gamma_l) = (-1)^{\zeta(\alpha) + {\zeta_{\perp}(\alpha;\gamma)}}hol(\partial \beta) (evb_0^\beta)_*\big{(}\bigwedge_{j=1}^{l} (evi_j^{\beta})^* \gamma_j \wedge \bigwedge_{j=1}^{k} (evb_j^{\beta})^*\alpha_j\big{)},
		\end{equation}
		where $\zeta_{\perp}(\alpha;\gamma) = |\alpha|' + |\gamma| + n$. Then, we modify the sign convention as before. We set
		\begin{equation}
			\q_{k,l,\perp_i}^\beta(\al_1, \dots, \al_k;\ga_1, \dots, \ga_l) = (-1)^{\dagger(\al) + k-1} \q_{k,l,\perp_{k+1-i}}^{ST,\beta}(\al_k, \dots, \al_1;\ga_1, \dots, \ga_l).
		\end{equation}
		The sign $\dagger(\al)$ is as before. When $i = 0$, $\perp_{k+1}$ should be interpreted as $\perp_0$.
	\end{defi}
	
	For all of the above $\mathfrak{q}_{*}^\beta$ operations, set \begin{equation}
		\mathfrak{q}_{*} = \sum_\beta Q^{\omega(\beta)}e^{\frac{\mu(\beta)}{2}} \mathfrak{q}^\beta_{*}.
	\end{equation}
	Here $\mu: H_2(X,L) \rightarrow \mathbb{Z}$ is the Maslov-class, and $e$ is of degree $2$. We thus have operations \begin{equation}
		\mathfrak{q}_{k,l}:A^*(L;\Lambda^e)^{\otimes k} \otimes A^*(X;\Lambda^e)^{\otimes l} \rightarrow A^*(L;\Lambda^e).
	\end{equation}
	
	Let $\langle \alpha_1, \alpha_2 \rangle_L = (-1)^{|\alpha_2|}\int_L \alpha_1 \wedge \alpha_2$ be the Poincar\'e pairing on $L$. \cite{ST3} prove results about the operations $\q^{ST}$, we state the analogous results for our operations $\q$. These follows by a direct verification of signs from the results in \cite{ST3}.
	\begin{nprop}[cyclic symmetry, see {\cite[Proposition~3.3]{ST3}}]
		\label{cyclic symmetry}
		For any $\alpha = (\alpha_1, \dots, \alpha_{k+1})$ and $\gamma = (\gamma_1, \dots, \gamma_l)$:
		\begin{equation}
			\langle \mathfrak{q}_{k,l}(\alpha_1, \dots, \alpha_k; \gamma_1, \dots, \gamma_l),\alpha_{k+1} \rangle_L = (-1)^{|\alpha_{k+1}|'\epsilon_k(\alpha)} \langle  \mathfrak{q}_{k,l}(\alpha_{k+1}, \alpha_1, \dots, \alpha_{k-1}; \gamma_1, \dots, \gamma_l),\alpha_{k} \rangle_L
		\end{equation}
	\end{nprop}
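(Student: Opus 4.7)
The plan is to deduce this cyclic symmetry directly from \cite[Proposition~3.3]{ST3} by a purely combinatorial sign calculation. The geometry is identical on the two sides of the identity: both pairings come from integrating the same form against the same moduli space $\MM_{k+1,l}(\beta)$ with the same evaluation maps, so no new analytic content is required. The entire content lies in comparing our sign convention with that of Solomon--Tukachinsky.

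First I would substitute the defining relation
\begin{equation*}
\q_{k,l}(\al_1,\dots,\al_k;\ga) = (-1)^{\dagger(\al_1,\dots,\al_k) + k - 1}\,\q^{ST}_{k,l}(\al_k,\dots,\al_1;\ga)
\end{equation*}
into both sides of the claimed equation. The LHS then becomes a Poincar\'e pairing of $\q^{ST}_{k,l}(\al_k,\dots,\al_1;\ga)$ with $\al_{k+1}$, and the RHS becomes a pairing of $\q^{ST}_{k,l}(\al_{k-1},\dots,\al_1,\al_{k+1};\ga)$ with $\al_k$, each decorated by explicit $\dagger$-signs. Now I would apply ST's cyclic symmetry once to the LHS to move $\al_{k+1}$ into the argument list of $\q^{ST}$ and pull $\al_k$ out into the pairing slot; this reshuffles $(\al_k,\dots,\al_1,\al_{k+1})$ into $(\al_{k+1},\al_k,\dots,\al_2)$ with $\al_1$ in the pairing slot, producing a Koszul sign of the form $|\al_k|'\,\sum_{j<k}|\al_j|'$. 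A reindexing (or, equivalently, an additional swap via the graded symmetry of $\langle\cdot,\cdot\rangle_L$) then puts this into the shape of the RHS.

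What remains is an identity of signs modulo $2$ comparing
\begin{equation*}
\dagger(\al_1,\dots,\al_k) + |\al_{k+1}|'\,\epsilon_k(\al)
\end{equation*}
with $\dagger(\al_{k+1},\al_1,\dots,\al_{k-1})$ plus the Koszul sign produced by the ST cyclic symmetry and the sign from permuting the pairing slot. Expanding
\begin{equation*}
\dagger(\al_{k+1},\al_1,\dots,\al_{k-1}) = |\al_{k+1}|'\sum_{i=1}^{k-1}|\al_i|' + \sum_{1\le i<j\le k-1}|\al_i|'|\al_j|',
\end{equation*}
the surviving cross-terms involving $\al_{k+1}$ cancel against $|\al_{k+1}|'\,\epsilon_k(\al)$ up to a contribution $|\al_{k+1}|'|\al_k|'$ which is exactly what the pairing swap produces, and the remaining $\al_k$-cross-terms absorb into the Koszul sign from the ST move.

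The main obstacle is purely bookkeeping: none of the three sign schemes ($\dagger$, the ST convention $\zeta$, and the Koszul signs coming from $\langle\cdot,\cdot\rangle_L$) is trivial to track, and the argument must also handle the grading shift by $e$ in $\Lambda^e$ correctly so that degrees modulo $2$ are the geometric dimensions used in \cite{ST3}. Once the three contributions are expanded and collected, however, the verification is short, and the proposition follows with no further input beyond the ST statement.
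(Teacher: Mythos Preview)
Your approach is correct and is exactly what the paper does: it states explicitly that this and the neighbouring propositions ``follow by a direct verification of signs from the results in \cite{ST3}'', without spelling out the computation. Your proposal supplies precisely that verification, so there is nothing to add beyond noting that the intermediate step you describe (``$\al_1$ in the pairing slot'') is phrased slightly off---one wants to apply the ST cyclic symmetry in the inverse direction so that $\al_k$ lands in the pairing slot directly---but this is a matter of wording and does not affect the argument.
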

	\begin{nprop}[degree property, see {\cite[Proposition~3.5]{ST3}}]
		\label{degree property}
		For any $\alpha = (\alpha_1, \dots, \alpha_{k})$ and $\gamma = (\gamma_1, \dots, \gamma_l)$:
		\begin{align}
			|\mathfrak{q}^\beta_{k,l}(\alpha_1, \dots, \alpha_k; \gamma_1, \dots, \gamma_l)| &= 2 + \epsilon(\alpha) -\mu(\beta) + \sum_{j=1}^l (|\gamma_j| - 2)  \\
			&\equiv \epsilon(\alpha) + \sum_{j=1}^l |\gamma_j| \; (\text{mod } 2)
		\end{align}
		The last equality holds as $L$ is orientable so the Maslov-index of any disk is even. By construction, we then have: \begin{equation}
			|\q_{k,l}(\al,\ga)| = 2 + \epsilon(\al) + |\ga| -2l. 
		\end{equation}
	\end{nprop}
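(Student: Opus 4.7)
The plan is to read off the degree of $\q^\beta_{k,l}(\alpha;\gamma)$ directly from its definition as a signed pushforward of a wedge of pulled-back forms along the evaluation map $evb_0^\beta : \MM_{k+1,l}(\beta) \to L$. The sign prefactor relating $\q$ and $\q^{ST}$ only reverses the order of the $\alpha_i$ and rescales by a scalar, so it leaves total form degree unchanged; hence the degree computation is identical to the one in \cite[Proposition 3.5]{ST3}.

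First I would observe that pullback preserves form degree, so the integrand $\bigwedge_{j=1}^l (evi_j^\beta)^*\gamma_j \wedge \bigwedge_{i=1}^k (evb_i^\beta)^*\alpha_i$ has degree $|\gamma| + |\alpha| = |\gamma| + \epsilon(\alpha) + k$. Since Assumption \ref{assumptions}.\ref{assumptions 4} tells us $evb_0^\beta$ is a proper submersion, the pushforward lowers form degree by the real fibre dimension $\dim \MM_{k+1,l}(\beta) - n$. The standard dimension count
\[
\dim \MM_{k+1,l}(\beta) = n + \mu(\beta) + (k+1) + 2l - 3,
\]
in which $n+\mu(\beta)$ is the index of the linearised Cauchy--Riemann operator for disks with Lagrangian boundary $L \subset X^{2n}$ in class $\beta$, the $(k+1)+2l$ summand accounts for the boundary and interior marked points, and the $-3$ quotients by $\mathrm{Aut}(\mathbb{D}, \partial \mathbb{D}) = PSL(2,\RR)$, then gives a fibre of real dimension $\mu(\beta) + k + 2l - 2$. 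Subtracting this from the integrand degree yields the first displayed formula $|\q^\beta_{k,l}(\alpha;\gamma)| = 2 + \epsilon(\alpha) - \mu(\beta) + \sum_j (|\gamma_j| - 2)$.

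The remaining claims are immediate. Since $L$ is orientable, $\mu(\beta)$ is even, and $2l$ is even, reducing the formula to $\epsilon(\alpha) + |\gamma| \pmod 2$. For the summed operation $\q_{k,l} = \sum_\beta Q^{\omega(\beta)} e^{\mu(\beta)/2} \q^\beta_{k,l}$, the coefficient $e^{\mu(\beta)/2}$ contributes degree $\mu(\beta)$ while $Q$ has degree zero, cancelling the $-\mu(\beta)$ term and giving the uniform expression $|\q_{k,l}(\alpha;\gamma)| = 2 + \epsilon(\alpha) + |\gamma| - 2l$. Finally, the excluded special cases are compatible by inspection: $\q_{1,0}^{\beta_0}(\alpha) = d\alpha$ has degree $|\alpha|+1 = 2+\epsilon(\alpha)$, matching the formula with $\mu(\beta_0)=0$ and $l=0$, while $\q_{0,0}^{\beta_0} = 0$ is vacuous. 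The only real technical point is justifying the moduli dimension formula, but under Assumptions \ref{assumptions} this is the standard index computation.
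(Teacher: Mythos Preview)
Your argument is correct and is exactly the standard dimension count that underlies \cite[Proposition~3.5]{ST3}; the paper itself gives no independent proof but simply cites that reference and notes that the sign change between $\q^{ST}$ and $\q$ does not affect degrees, which you also observe. There is nothing to add.
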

	\begin{nprop}[unit property, see {\cite[Proposition~3.2]{ST3}}]
		\label{unit property}
		For $f \in A^0(L)$, $\alpha_1, \dots, \alpha_k \in A^*(L;R)$ and $\gamma \in A^*(X;Q)^{\otimes l}$, we have:
		\begin{equation}
			\mathfrak{q}^\beta_{k,l}(\alpha_1, \dots, \alpha_{i-1},f,\alpha_i, \dots, \alpha_k; \gamma) =
			\begin{cases}
				df  &(k,l,\beta) = (1,0,\beta_0)\\
				(-1)^{|f|}f\alpha_1 &(k,l,\beta) = (2,0,\beta_0), \; i=1\\
				(-1)^{|\alpha_1||f|'}f\alpha_1 & (k,l,\beta) = (2,0,\beta_0), \; i=2\\
				0 & \text{otherwise}
		\end{cases}\end{equation}
	\end{nprop}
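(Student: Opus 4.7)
The proof plan is to derive this proposition directly from the corresponding unit property for the Solomon--Tukachinsky operations $\mathfrak{q}^{ST}$, namely \cite[Proposition~3.2]{ST3}, by carefully tracking the two sign adjustments that distinguish $\q$ from $\q^{ST}$. Recall that by definition
\[
\q_{k,l}^\beta(\al_1,\dots,\al_k;\ga) = (-1)^{\dagger(\al) + k - 1}\, \q_{k,l}^{ST,\beta}(\al_k,\dots,\al_1;\ga),
\]
so reading off the unit property for $\q$ amounts to reversing the order of the boundary inputs (which moves the insertion of $f$ from position $i$ to position $k+1-i$) and then multiplying by the prefactor $(-1)^{\dagger(\al) + k - 1}$.

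First I would dispatch the generic case. The ST unit property tells us that $\q^{ST,\beta}_{k,l}(\dots,f,\dots;\ga)$ vanishes whenever $(k,l,\beta)$ is not one of $(1,0,\beta_0)$ or $(2,0,\beta_0)$, so the same is true of $\q^\beta_{k,l}$ after applying the sign-twisted reversal. Next, for $(k,l,\beta) = (1,0,\beta_0)$ the reversal is trivial and the prefactor equals $(-1)^{\dagger(f) + 0} = 1$ since $\dagger$ of a single-element tuple is empty, so $\q^{\beta_0}_{1,0}(f;) = \q^{ST,\beta_0}_{1,0}(f;) = df$, as desired.

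The substantive case is $(k,l,\beta) = (2,0,\beta_0)$. Here I would treat $i=1$ and $i=2$ separately. For $i=1$, expand
\[
\q^{\beta_0}_{2,0}(f,\al_1;) = (-1)^{\dagger(f,\al_1) + 1}\, \q^{ST,\beta_0}_{2,0}(\al_1, f;),
\]
so $f$ sits in position $2$ on the ST side. Substituting the ST unit property for $f$ in position $2$ gives a multiple of $f\al_1$ (or $\al_1 f$), and combining with $\dagger(f,\al_1) = |f|'\,|\al_1|'$ produces the claimed sign $(-1)^{|f|}$. For $i=2$, the analogous expansion places $f$ in position $1$ on the ST side, and the sign $\dagger(\al_1,f) = |\al_1|'\,|f|'$ conspires with the prefactor $(-1)^{k-1} = -1$ and the sign from ST's $i=1$ case to give $(-1)^{|\al_1||f|'}f\al_1$.

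The only real subtlety is bookkeeping: $f$ has even degree as a differential form so $|f|' = |f|-1$ is odd, which means $\dagger$ contributes a genuinely nontrivial sign. I expect no conceptual obstacle — this is a pure sign computation — and the analogous translations needed for Propositions \ref{cyclic symmetry} and \ref{degree property} (already stated above without proof) are carried out in the same spirit and provide a template.
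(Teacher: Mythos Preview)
Your proposal is correct and follows exactly the approach the paper indicates: the paper states only that ``these follow by a direct verification of signs from the results in \cite{ST3}'', and your plan is precisely to carry out that verification by unwinding the definition $\q_{k,l}^\beta(\al_1,\dots,\al_k;\ga) = (-1)^{\dagger(\al)+k-1}\q_{k,l}^{ST,\beta}(\al_k,\dots,\al_1;\ga)$ and applying \cite[Proposition~3.2]{ST3}. One minor remark: since $f \in A^0(L)$ you have $|f|=0$, so $|f|'=-1$; this makes the sign bookkeeping in the $(2,0,\beta_0)$ cases slightly cleaner than your write-up suggests (e.g.\ $\dagger(f,\al_1)=|f|'|\al_1|' \equiv |\al_1|'$), but the outcome is the same.
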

\begin{nprop}[top degree property, see {\cite[Proposition~3.12]{ST3}}]
	\label{top degree property}
	Let $\delta_n$ denotes the degree $n$ part of a differential form $\delta \in A^*(L;R)$. Then $(\mathfrak{q}^\beta_{k,l}(\alpha;\gamma))_n = 0$ for $(k,l,\beta) \notin \{ (1,0,\beta_0), (0,1,\beta_0), (2,0,\beta_0)  \}$.  
\end{nprop}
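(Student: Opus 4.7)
The proof is a degree count based on the forgetful map. The case $(k,l,\beta)=(0,0,\beta_0)$ holds trivially since $\q^{\beta_0}_{0,0}:=0$ by definition. For all other non-exceptional triples, the plan is to show that the form
\begin{equation*}
	\omega \;:=\; \bigwedge_{j=1}^l (evi_j^\beta)^*\gamma_j\;\wedge\;\bigwedge_{i=1}^k (evb_i^\beta)^*\alpha_i,
\end{equation*}
appearing inside the pushforward defining $\q^\beta_{k,l}(\alpha;\gamma)$, is pulled back from the smaller moduli space $\MM_{k,l}(\beta)$ via the forgetful map, so $(evb_0^\beta)_*\omega$ has degree strictly less than $n$.

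First, for $(k,l,\beta)$ outside $\{(0,0,\beta_0),(1,0,\beta_0),(0,1,\beta_0),(2,0,\beta_0)\}$, the moduli space $\MM_{k,l}(\beta)$ is stable: either $\beta\neq\beta_0$, or $\beta=\beta_0$ with $2l+k\geq 3$. In this range the forgetful map $f:\MM_{k+1,l}(\beta)\to\MM_{k,l}(\beta)$ dropping the $0$-th boundary marked point is a well-defined proper submersion with one-dimensional fibers (an arc of $\partial\Sigma$). This step pins down the exceptional set in the proposition: in each of the three listed cases, together with $(0,0,\beta_0)$, the target $\MM_{k,l}(\beta_0)$ is unstable and $f$ is not defined.

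Since the evaluations $evi_j^\beta$ for $j\geq 1$ and $evb_i^\beta$ for $i\geq 1$ are preserved by forgetting the $0$-th boundary marked point, they all factor through $f$, so $\omega=f^*\eta$ for some $\eta\in A^*(\MM_{k,l}(\beta))$. Hence $\deg\omega=\deg\eta\leq\dim\MM_{k,l}(\beta)=\dim\MM_{k+1,l}(\beta)-1$. Since $evb_0^\beta$ is a proper submersion by the regularity assumptions, its pushforward reduces degree by the fiber dimension $\dim\MM_{k+1,l}(\beta)-n$. Therefore
\begin{equation*}
	\deg(evb_0^\beta)_*\omega\;\leq\;\bigl(\dim\MM_{k+1,l}(\beta)-1\bigr)-\bigl(\dim\MM_{k+1,l}(\beta)-n\bigr)\;=\;n-1,
\end{equation*}
so the degree-$n$ component of $\q^\beta_{k,l}(\alpha;\gamma)=\pm\,hol(\partial\beta)\cdot(evb_0^\beta)_*\omega$ vanishes.

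The only non-routine step is verifying that $f$ is well-defined with one-dimensional fibers for non-exceptional triples; this is the standard stable-disk dimension count, and it is precisely this that singles out the three exceptional cases.
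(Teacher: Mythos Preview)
Your argument is correct and is precisely the standard proof: the paper does not supply its own argument but simply cites \cite[Proposition~3.12]{ST3}, where the proof is exactly the forgetful-map dimension count you give. The identification of the exceptional triples with the unstable targets of the forgetful map, and the degree bound $\deg(evb_0^\beta)_*\omega\leq n-1$ coming from $\omega=f^*\eta$ with $\deg\eta\leq\dim\MM_{k,l}(\beta)$, are both right.
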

	\begin{nprop}[divisor property, see {\cite[Proposition~3.9]{ST3}}]
		\label{divisor property}
		Assume $\gamma_1 \in A^2(X,L)$, $d\gamma_1 = 0$, then \begin{equation}
			\mathfrak{q}^\beta_{k,l}(\alpha, \gamma) = (\int_\beta \gamma_1)\cdot \mathfrak{q}^\beta_{k,l-1}(\alpha; \bigotimes_{j \geq 2} \gamma_j)
		\end{equation}
	\end{nprop}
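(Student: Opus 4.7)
The plan is to deduce this from \cite[Proposition~3.9]{ST3}, which is the analogous divisor property for the Solomon--Tukachinsky operations $\q^{ST,\beta}$, and then check that the sign convention relating $\q^\beta_{k,l}$ to $\q^{ST,\beta}_{k,l}$ does not introduce any discrepancy. This is exactly the pattern used for the preceding propositions in this subsection, which the text flags as following ``by a direct verification of signs''.

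Concretely, I would expand both sides of the claimed identity using
\[
\q^\beta_{k,l}(\al;\ga) = (-1)^{\dagger(\al)+k-1}\q^{ST,\beta}_{k,l}(\al_k,\dots,\al_1;\ga_1,\dots,\ga_l).
\]
The prefactor $(-1)^{\dagger(\al)+k-1}$ depends only on the boundary inputs $\al$ and on $k$, and these are unchanged when we drop $\ga_1$ from the list of interior inputs; hence the same sign appears on the left- and right-hand sides and cancels. What remains to prove is
\[
\q^{ST,\beta}_{k,l}(\al_k,\dots,\al_1;\ga_1,\dots,\ga_l) = (\textstyle\int_\beta \ga_1)\cdot \q^{ST,\beta}_{k,l-1}(\al_k,\dots,\al_1;\ga_2,\dots,\ga_l),
\]
which is precisely \cite[Proposition~3.9]{ST3}. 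Refolding the definition on the right then yields $(\int_\beta \ga_1)\cdot \q^\beta_{k,l-1}(\al;\ga_2\otimes\cdots\otimes\ga_l)$, as desired.

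The only point that needs a moment's care --- and thus the main, albeit trivial, obstacle --- is the extraction of $\ga_1$ from the wedge product $\bigwedge_{j=1}^{l}(evi_j^\beta)^*\ga_j$ in the definition of $\q^{ST,\beta}_{k,l}$: one must verify that no Koszul sign appears. This is automatic, since $|\ga_1|=2$ is even and $\ga_1$ therefore graded-commutes with every other factor in the integrand. Should \cite{ST3} state their result with the distinguished interior input in a slot other than the first, the same even-degree observation means the graded symmetry of $\q^{ST,\beta}$ in its interior inputs can be used to reposition $\ga_1$ without any sign cost before invoking their statement.
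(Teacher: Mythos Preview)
Your proposal is correct and matches the paper's approach exactly: the paper gives no explicit proof but states that this and the neighbouring propositions ``follow by a direct verification of signs from the results in \cite{ST3}.'' Your observation that the conversion sign $(-1)^{\dagger(\al)+k-1}$ depends only on the boundary inputs, and hence cancels between the two sides, is precisely that verification.
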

	\begin{nprop}[energy-zero property, see {\cite[Proposition~3.8]{ST3}}]
		\label{energy zero property}
		For $k \geq 0$, 
		\begin{equation}
			\mathfrak{q}_{k,l}^{\beta_0}(\alpha_1, \dots, \alpha_k; \gamma_1, \dots, \gamma_l) = \begin{cases*}
				d\alpha_1, &$(k,l) = (1,0)$,\\
				(-1)^{|\alpha_1|}\alpha_1 \wedge \alpha_2, &$(k,l) = (2,0)$,\\
				\gamma_1|_L, &$(k,l) = (0,1)$,\\
				0, &\text{otherwise}.
			\end{cases*}
		\end{equation}
	\end{nprop}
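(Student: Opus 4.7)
The plan is to exploit the fact that every stable $J$-holomorphic map in class $\beta_0$ must be constant, since $\omega(\beta_0)=0$ forces zero energy. Under this observation I would identify the moduli space as a product $\mathcal{M}_{k+1,l}(\beta_0) \cong \mathcal{M}_{k+1,l}^{\mathrm{dom}} \times L$, where the first factor is the Deligne--Mumford-type moduli space of stable disk domains with $k+1$ boundary and $l$ interior marked points and the second records the constant image in $L$. Under this identification all evaluation maps $evb_i^{\beta_0}$ and $evi_j^{\beta_0}$ collapse to the projection onto the $L$-factor, noting that interior marked points land on $L \subset X$ because the map is constant with boundary on $L$.

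Next I would determine for which $(k,l)$ the moduli space is nonempty and contributes. Domain stability requires $k+1+2l \geq 3$, and the case $(k,l)=(1,0)$ is excluded by the definitional convention $\q_{1,0}^{ST,\beta_0} := d\alpha$, while $(k,l)=(0,0)$ is excluded by $\q_{0,0}^{ST,\beta_0}:=0$. For the remaining $(k,l)$, the pulled-back forms $(evb_i^{\beta_0})^*\alpha_i$ and $(evi_j^{\beta_0})^*\gamma_j$ depend only on the $L$-factor, so their wedge product has no component along $\mathcal{M}_{k+1,l}^{\mathrm{dom}}$. The push-forward along $evb_0^{\beta_0}$, which integrates over the fibers $\mathcal{M}_{k+1,l}^{\mathrm{dom}}$, therefore vanishes whenever $\dim \mathcal{M}_{k+1,l}^{\mathrm{dom}} = k+2l-2 > 0$. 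The only zero-dimensional cases are $(k,l)=(2,0)$ (three boundary points rigidify the disk) and $(k,l)=(0,1)$ (one boundary plus one interior point rigidify the disk), and here the push-forward reduces to $\alpha_1 \wedge \alpha_2$ and to $\gamma_1|_L$ respectively.

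Finally I would verify the sign $(-1)^{|\alpha_1|}$ appearing in the $(2,0)$ case. Since $\partial \beta_0 = 0$ the holonomy $hol(\partial \beta_0) = 1$ is trivial, so the only sources of sign are the prefactor $(-1)^{\zeta(\alpha) + \dagger(\alpha) + k-1}$ from the definition of $\q_{k,l}^\beta$, together with the graded commutativity needed to rewrite $\alpha_2 \wedge \alpha_1$ as $\alpha_1 \wedge \alpha_2$. A direct accounting for $(k,l)=(2,0)$, expanding $|\alpha_i|' = |\alpha_i|-1$ and reducing mod $2$, yields total sign $(-1)^{|\alpha_1|}$. The main obstacle is this sign bookkeeping, which while routine is error-prone; in practice I would deduce the result by translating the analogous statement for $\q^{ST,\beta_0}$ in \cite[Proposition~3.8]{ST3} through the identity $\q_{k,l}^\beta(\alpha;\gamma) = (-1)^{\dagger(\alpha)+k-1}\q_{k,l}^{ST,\beta}(\alpha_k,\dots,\alpha_1;\gamma)$, so the geometric content above is really all that requires new work.
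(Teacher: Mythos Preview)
Your proposal is correct and matches the paper's approach exactly. The paper does not give an independent proof of this proposition; it simply states that the properties of the $\q$ operations ``follow by a direct verification of signs from the results in \cite{ST3}'', and your final paragraph does precisely this translation via $\q_{k,l}^\beta(\alpha;\gamma) = (-1)^{\dagger(\alpha)+k-1}\q_{k,l}^{ST,\beta}(\alpha_k,\dots,\alpha_1;\gamma)$, while your geometric sketch recapitulates the content of \cite[Proposition~3.8]{ST3} itself.
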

	\begin{nprop}[fundamental class property, see {\cite[Proposition~3.7]{ST3}}]
		\label{fundamental class property}
		For $k \geq 0$, \begin{equation}
			\mathfrak{q}_{k,l}^{\beta}(\alpha_1, \dots, \alpha_k; 1, \gamma_1, \dots, \gamma_{l-1}) = 
			\begin{cases}
				1, &(k,l,\beta) = (0,1,\beta_0),\\
				0, & \;\text{otherwise}
		\end{cases}\end{equation}
	\end{nprop}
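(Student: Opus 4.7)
The plan is to derive this from the corresponding statement for Solomon--Tukachinsky's $\q^{ST}$-operations (\cite[Proposition~3.7]{ST3}) and then verify the sign. Since the conversion formula
\begin{equation*}
\q_{k,l}^\beta(\al_1, \dots, \al_k;\ga_1, \dots, \ga_l) = (-1)^{\dagger(\al) + k-1} \q_{k,l}^{ST,\beta}(\al_k, \dots, \al_1;\ga_1, \dots, \ga_l)
\end{equation*}
only reverses the order of the boundary inputs and multiplies by a sign, the vanishing statements transfer directly. It thus remains to recall the geometric mechanism underlying the vanishing and to carry out the sign chase in the single exceptional case.

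The vanishing mechanism splits along whether $\beta = \beta_0$ or not. For $\beta \neq \beta_0$, one uses the forgetful map $\pi: \MM_{k+1,l}(\beta) \to \MM_{k+1,l-1}(\beta)$ dropping the first interior marked point, which is defined because non-constant stable disks remain stable regardless of marked-point count. Since $(evi_1^\beta)^* 1 = 1$ and all other evaluation maps factor through $\pi$, the integrand is the $\pi$-pullback of the integrand defining $\q_{k,l-1}^\beta(\al;\ga_2,\dots,\ga_l)$. By the projection formula, $(evb_0^\beta)_* \pi^*(\omega) = (evb_0^{\beta, l-1})_*(\omega \cdot \pi_* 1)$, and $\pi_* 1 = 0$ because the fibre of $\pi$ has real dimension two. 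For $\beta = \beta_0$ the moduli space $\MM_{k+1,l}(\beta_0) \cong L \times C$, where $C$ is the configuration space of $(k+1) + l$ points on the disk modulo automorphisms; all evaluation maps factor through the projection to $L$, so the same projection-formula argument kills the pushforward as soon as $\dim C > 0$, i.e.\ $(k+1) + 2l - 3 > 0$. The only stable case with $(k+1) + 2l \leq 3$ is $(k,l) = (0,1)$.

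In this exceptional case $\MM_{1,1}(\beta_0)$ is canonically identified with $L$, with both $evb_0^{\beta_0}$ and $evi_1^{\beta_0}$ equal to the identity. Hence $\q_{0,1}^{ST,\beta_0}(;1) = (-1)^{\zeta(\emptyset)} (evb_0^{\beta_0})_*(1) = -1$, and the conversion factor $(-1)^{\dagger(\emptyset) + 0 - 1} = -1$ flips this to $+1$, as claimed.

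The main subtlety is ensuring that the forgetful map and its fibre integration behave well at all boundary strata of the orbifold moduli spaces (smoothness and properness of $evb_0$ being guaranteed by Assumptions \ref{assumptions}), and that the projection-formula identity is compatible with the fibre-product orientations on $\MM_{k+1,l}(\beta)$. These are routine bookkeeping checks already implicit in \cite{ST3}; the underlying geometric content is elementary, and the only genuinely new work compared to \cite{ST3} is propagating the sign redefinition.
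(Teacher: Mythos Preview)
Your proposal is correct and follows precisely the approach the paper indicates: the paper does not give a standalone proof of this proposition but states that all of these properties ``follow by a direct verification of signs from the results in \cite{ST3}.'' You carry out exactly that sign verification in the exceptional case $(k,l,\beta)=(0,1,\beta_0)$, and the vanishing for all other cases transfers immediately from \cite[Proposition~3.7]{ST3} since the conversion from $\q^{ST}$ to $\q$ is just a global sign times a reversal of boundary inputs. Your additional sketch of the forgetful-map mechanism is correct supplementary detail that the paper omits; the only minor imprecision is the phrase ``non-constant stable disks remain stable regardless of marked-point count'' (ghost components can destabilise), but the forgetful map is still well defined by contracting unstable components, so the argument goes through.
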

	Let  $\gamma = (\gamma_1, \dots,\gamma_l)$ be a list of differential forms on $X$. For subsets $I \sqcup J = \{ 1, \dots, l\}$, define $sign^\gamma(I,J)$ by the equation \begin{equation}
		\bigwedge_{i\in I} \gamma_i \wedge \bigwedge_{j\in J} \gamma_j = (-1)^{sign^\gamma(I,J)}\bigwedge_{s\in [l]} \gamma_s,
	\end{equation}
	or explicitly \begin{equation}
		sign^\gamma(I,J) = \sum_{i \in I, j \in J, j<i} |\gamma_i||\gamma_j|.
	\end{equation}
	\subsection{The $\q_{-1}$-operations}
	\label{section: q -1 operations}
	Solomon-Tukachinsky \cite{ST3} define counts of disks with no boundary marked points \begin{equation}
		\mathfrak{q}_{-1,l}^{\beta}: A^*(X)^{\otimes l } \rightarrow \mathbb{C},
	\end{equation}
	by setting \begin{equation}
		\mathfrak{q}_{-1,l}^{\beta}(\gamma_1 \otimes \dots \otimes \gamma_l) := \int_{\mathcal{M}_{0,l}(\beta)} \bigwedge_{j=1}^l (evi^\beta_j)^* \gamma_j,
	\end{equation}
	with special cases \begin{equation}
	 \mathfrak{q}_{-1,1}^{\beta_0} := 0, \; \mathfrak{q}_{-1,0}^{\beta_0} := 0.
	\end{equation}
	In contrast to op.\ cit.\ we additionally incorporate the parameter $e$ to keep track of grading, and set: \begin{equation}
	\q_{-1,l} := \sum_{\beta} \q_{-1,l}^\beta Q^{\omega(\beta)}e^{\mu(\beta)}.
\end{equation}
	We will use these operations without any sign changes. The following follows directly from op.\ cit.\ 
	\begin{nprop}[{\cite[Proposition~2.5]{ST3}}]
		\label{boundary of q -1 operation}
		For any $\ga = (\ga_1, \dots, \ga_l)$:
		\begin{align}
			0 &= \sum_{\substack{ P \in S_3[k] \\ (2:3) = \{ j \}}}  (-1)^{|\gamma^{(1:3)}| + 1}\mathfrak{q}_{-1,l}(\gamma^{(1:3)} \otimes d\gamma_j \otimes \gamma^{(3:3)})\nonumber\\
			&+ \frac{1}{2}\sum_{\substack{I \sqcup J = [l]}} (-1)^{i(\gamma, I)} \langle \mathfrak{q}_{0,|I|}( \gamma^I) , \mathfrak{q}_{0,|J|}(\gamma^{J}) \rangle_L + (-1)^{|\gamma|+1} \int_L i^*\mathfrak{q}_{\emptyset, l}(\gamma),
		\end{align}
		where $i(\gamma,I) := \sum_{j \in I} |\ga_j| + sgn^\ga(I,[l]\setminus I)$.
	\end{nprop}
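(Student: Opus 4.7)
This identity is \cite[Proposition~2.5]{ST3} transported to our conventions. Since the operations $\q_{-1,l}$ are defined above without any sign modification relative to op.\ cit., no re-signing is required and the statement follows directly from citation. For orientation, however, the plan is to derive it by applying Stokes' theorem to the integral $\int_{\MM_{0,l}(\beta)} \bigwedge_{j=1}^{l} (evi_j^\beta)^* \ga_j$ and then summing over $\beta$ weighted by $Q^{\omega(\beta)} e^{\mu(\beta)/2}$.

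First I would handle the interior contribution. Expanding $d \bigwedge_{j}(evi_j^\beta)^*\ga_j$ by the signed Leibniz rule reproduces the first sum: the partitions $P \in S_3[l]$ with $(2:3) = \{j\}$ parameterise the factor on which $d$ lands, and the sign $(-1)^{|\ga^{(1:3)}| + 1}$ records the cost of moving $d$ past the preceding forms together with the global sign of Stokes' theorem.

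Next I would enumerate the codimension-one boundary of $\overline{\MM_{0,l}(\beta)}$. It decomposes into two kinds of strata: (a) disk-node strata, where the domain splits into two disks meeting at a boundary node and the interior marked points are distributed by a partition $I \sqcup J = [l]$ with homology decomposition $\beta_1 + \beta_2 = \beta$, giving fibre products $\MM_{1,|I|}(\beta_1) \times_L \MM_{1,|J|}(\beta_2)$ over $L$ along $evb_0$; and (b) the stratum where the disk contracts to a point of $L$ while a sphere bubble attached at the interior node carries all $l$ marked points. Pushing forward to $L$ and applying the definitions of $\q_{0,\cdot}$, $\q_{\emptyset,\cdot}$ and the Poincar\'e pairing produces the quadratic sum $\tfrac{1}{2}\sum_{I \sqcup J} \langle \q_{0,|I|}(\ga^I), \q_{0,|J|}(\ga^J)\rangle_L$, with the $\tfrac{1}{2}$ accounting for the $\ZZ/2$-symmetry swapping the two components and the associated choice of $(I,\beta_1)\leftrightarrow(J,\beta_2)$, together with the closed-string term $\int_L i^*\q_{\emptyset,l}(\ga)$.

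The main obstacle is the orientation bookkeeping which yields the signs $(-1)^{i(\ga, I)}$ and $(-1)^{|\ga|+1}$, comparing the fibre-product orientation on each boundary stratum with the induced boundary orientation on $\overline{\MM_{0,l}(\beta)}$. This is carried out carefully in \cite[Section~2]{ST3}; since our $\q_{-1,l}$, $\q_{0,\cdot}$ and $\q_{\emptyset,\cdot}$ agree with theirs on the terms appearing in this particular formula, the signs import verbatim.
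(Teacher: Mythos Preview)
Your proposal is correct and matches the paper's approach, which is simply to invoke \cite[Proposition~2.5]{ST3} directly; the expository Stokes-theorem sketch you add is a faithful summary of the argument in op.\ cit. One small point worth sharpening: strictly speaking $\q_{0,l} = -\q_{0,l}^{ST}$ under the paper's sign convention (set $k=0$ in the definition and note $(-1)^{\dagger(\emptyset)+k-1}=-1$), so $\q_{0,\cdot}$ does not literally agree with the Solomon--Tukachinsky operation; however, since $\q_0$ enters the formula bilinearly in $\langle \q_{0,|I|}(\ga^I),\q_{0,|J|}(\ga^J)\rangle_L$, the two extra signs cancel and the identity does indeed import verbatim as you assert.
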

	\begin{nprop}[compare with {\cite[Lemma~3.9]{ST2}}]
		\label{unit on the horocycle 2}
		$\mathfrak{q}_{-1,l}(\gamma) = (-1)^{|\gamma|}\langle \mathfrak{q}_{0,l;\perp_0}(\gamma),1\rangle_L$.
	\end{nprop}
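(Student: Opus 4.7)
The plan is to unpack both sides as integrals over moduli spaces and then establish an orientation-preserving diffeomorphism between the relevant ones at the level of each homology class $\beta$. After summing Novikov contributions, the left-hand side becomes $\sum_\beta Q^{\omega(\beta)} e^{\mu(\beta)} \int_{\MM_{0,l}(\beta)} \bigwedge_j (evi_j^\beta)^* \gamma_j$. For the right-hand side, I would first apply the sign-converting identity $\q^\beta_{0,l;\perp_0} = -\q^{ST,\beta}_{0,l;\perp_0}$ (using $\dagger(\emptyset) = 0$, $k=0$, and the convention $\perp_{k+1} = \perp_0$ for $i=0$), then unfold $\q^{ST,\beta}_{0,l;\perp_0}$ using $\zeta(\emptyset) = 1$ and $\zeta_\perp(\emptyset;\gamma) = |\gamma|+n$, and finally pair with $1$ via $\langle \cdot, 1\rangle_L = \int_L \cdot$. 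This reduces the proposition, at each $\beta$, to an equality of integrals over $\MM_{1,l;\perp_0}(\beta)$ and $\MM_{0,l}(\beta)$, up to a controllable sign.

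The geometric heart of the argument is to show that the forgetful map $\pi_\beta : \MM_{1,l;\perp_0}(\beta) \to \MM_{0,l}(\beta)$ that drops the boundary marked point $z_0$ is an orientation-preserving diffeomorphism. Morally, the horocyclic condition $z_0, w_1, w_2 \in I$ is a complete gauge-fixing of the three-dimensional automorphism group $PSL(2,\RR)$, exactly as three marked points would be. To verify bijectivity, I would take a point in $\MM_{0,l}(\beta)$ and use two dimensions of $PSL(2,\RR)$ to place $w_1 = 0$, leaving a residual $S^1$ of rotations about the origin. A direct computation along $I$ gives $|z(\theta)|^2 = (1-\sin\theta)/2$, which is strictly monotonic on $\theta \in [\pi/2, 3\pi/2]$; hence the $S^1$-orbit of $w_2$ meets $I$ in exactly one point, so there is a unique rotation placing $w_2 \in I$, after which setting $z_0 = -i$ is the unique compatible choice of boundary marked point.

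Since $\pi_\beta$ intertwines the interior evaluation maps $evi_j^\beta$, the identity of integrals will follow once orientations are matched. For this I would compare the fiber-product orientation on $\MM_{1,l;\perp_0}(\beta) = I \times_{D^2} \MM_{1,l}(\beta)$ with the orientation inherited from the forgetful tower $\MM_{1,l}(\beta) \to \MM_{0,l}(\beta)$, whose $S^1$-fiber is precisely what the horocyclic slicing eliminates. Finally I would check that the cumulative sign from the $(-1)^{|\gamma|}$ in the statement, the $\q$-versus-$\q^{ST}$ conversion, the $\zeta_\perp$ sign, the holonomy convention on $\q_{-1,l}$, and the orientation comparison all collapse to $+1$.

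The hard part will be the sign bookkeeping rather than any geometric subtlety. The diffeomorphism itself is essentially transparent once one observes the monotonicity of $|z|$ along $I$, but carefully propagating the fiber-product orientation on the horocyclic slice against the natural orientation on $\MM_{0,l}(\beta)$, and then weaving this through the layers of sign conventions ($\zeta$, $\zeta_\perp$, $\dagger$, and the Poincar\'e pairing), is where the delicate work lies.
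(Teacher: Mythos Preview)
Your proposal is correct and follows essentially the same approach as the paper, which simply defers to \cite[Lemma~3.9]{ST2} and records the resulting sign $\zeta(\emptyset) + \zeta_\perp(\emptyset,\gamma) + n + 1 \equiv |\gamma|$. You have spelled out the geometric content of that referenced argument---the forgetful map $\MM_{1,l;\perp_0}(\beta) \to \MM_{0,l}(\beta)$ is a diffeomorphism because the horocyclic constraint is a complete gauge-fixing, verified via the monotonicity of $|z|$ along $I$---and correctly identified the orientation and sign bookkeeping as the remaining work, which is exactly what the paper's one-line sign computation summarises.
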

	\begin{proof}
		The proof is very similar to that of \cite[Lemma~3.9]{ST2} and \cite[Proposition 4.32]{Hug}. We use a different sign convention to \cite{ST2} in the $\q$ operations, and so need to modify the sign accordingly. Following through their proof gives the sign: \begin{equation}
			\zeta(\emptyset) + \zeta_{\perp}(\emptyset,\gamma) + n + 1 \equiv |\gamma|\;(mod \; 2).
		\end{equation}
	\end{proof}
	
	\subsection{Bulk-deformations and bounding cochains}
	\label{section: bulk-deformations and bounding cochains}
	For a $\ZZ$-graded $\CC$-vector space $U$, let $\CC[[U]]$ be the ring of formal functions on the completion of $U$ at the origin. Explicitly, let $\{v_i\}_{i \in I}$ be a homogeneous basis for $U$, and $\{v_i^*\}_{i \in I}$ the dual basis for $U^*$. Let $\{t_i\}_{i \in I}$ be formal variables of degree $-|v_i|$, then we have an isomorphism: \begin{align}
		\label{isomorphism coefficient ring}
		\CC[[t_i]]_{i \in I} &\cong \CC[[U]],\notag \\
		t_i &\mapsto v_i^*.
	\end{align}
	Each formal vector field $v \in \CC[[U]] \otimes U$ on $U$ can be viewed as a derivation $\partial_v: \CC[[U]] \rightarrow \CC[[U]]$. In coordinates, if $v = \sum_i f_i v_i$, for some $f_i \in \CC[[U]]$, then $\partial_v = \sum_i f_i \partial_{t_i}$. Define the vector fields \begin{equation}
		\Gamma_U = \sum_i t_i \partial_{t_i} \text{ and } E_U = \sum_i \frac{deg(t_i)}{2}t_i\partial_{t_i}.
	\end{equation}
	These are independent of the chosen basis. For $l \in \mathbb{Z}$, let $U[l]$ denote the graded vector space with $U[l]^i = U^{i+l}$. Then set:
	\begin{equation}
		Q_U :=  \Lambda[[U[2]]] \text{ and } Q_U^e := \Lambda^e[[U[2]]].
	\end{equation}
	
	Following \cite{ST3}, define the valuation $\zeta_Q: Q_U \rightarrow \RR_{\geq 0}$ by: \begin{equation}
		\label{valuation on coefficient ring}
		\zeta_Q\left(\sum_{j = 0}^{\infty} a_jQ^{\lambda_j} \prod_{i \in I} t_i^{l_{ij}}\right) = \inf_{\substack{j \\ a_j\neq 0}} (\lambda_j + \sum_{i \in I} l_{ij}).
	\end{equation} 
	Let $\mathcal{I}_U = \{ f \in Q_U | \zeta_Q(f) > 0 \} \subset Q_U$. We extend the valuation $\zeta_Q$ to $Q_U^e$ by setting $\zeta_{Q^e}(e) = 0$.
	\begin{defi}
		A \emph{bulk-deformation parameter} over $U$ is an element $\gamma \in \mathcal{I}_UA^*(X;Q_U)$ with $d\gamma = 0$, $|\gamma| = 2$ and $[\gamma] = \Gamma_U \in Q_U \otimes U$.
	\end{defi}
	\begin{assumption}
		\label{bulk-deformation assumption}
		We assume there exists a $\YY \in Der_{\Lambda^e} Q^e_U$ be such that $[\YY(\gamma)] = c_1 \in H^*(X;Q_U^e)$.
	\end{assumption}
	\begin{defi}
		A \emph{bulk-deformation pair} over $U$ is a pair $(b,\ga)$. Here $\gamma \in \mathcal{I}_UA^*(X;Q_U)$ is a bulk-deformation parameter over $U$ and $b \in \mathcal{I}_U A^*(L,Q_U^e)$ with $|b| = 1$.
	\end{defi}
	We now recall the definitions of the bulk-deformed $\q$-operations from \cite[Section~3.4]{ST2}. For a bulk-deformation pair $(b,\ga)$ these are given by:
	\begin{equation}
		\mathfrak{q}^{b,\gamma}_{k,l}(\al_1, \dots, \al_k; \ga_1, \dots, \ga_l) := \sum_{\substack{s,t \geq 0\\s_0 + \dots + s_k = s}} \frac{1}{t!}\mathfrak{q}_{k+s,l+t}(b^{\otimes s_0} \otimes \al_1 \otimes \dots \otimes \al_k \otimes b^{\otimes s_k}; \ga_1 \otimes \dots \otimes \ga_l \otimes \gamma^{\otimes t}).
	\end{equation}
	Similarly we have:\begin{equation}
		\mathfrak{q}^{\gamma}_{\emptyset,l}(\ga_1, \dots, \ga_l) = \sum_t \frac{1}{t!}\mathfrak{q}_{\emptyset,l+t}(\ga_1 \otimes \dots \otimes \ga_l \otimes \ga^{\otimes t}).
	\end{equation}
	The bulk-deformed horocyclic $\mathfrak{q}$-operations are given by:\begin{equation}
		\mathfrak{q}^{b,\gamma}_{k,l,\perp_i}(\al_1, \dots, \al_k; \ga_1, \dots, \ga_l) := \sum_{\substack{s,t \geq 0\\s_0 + \dots + s_k = s}} \frac{1}{t!}\mathfrak{q}_{k+s,l+t,\perp_{i + \sum_j^{i-1} s_j}}(b^{\otimes s_0} \otimes \al_1 \otimes \dots \otimes \al_k \otimes b^{\otimes s_k}; \ga_1 \otimes \dots \otimes \ga_l \otimes \gamma^{\otimes t}).
	\end{equation}
	Finally, we have: \begin{equation}
		\qb_{-1,l}(\ga_1, \dots, \ga_l) := \sum_{k,t} \frac{1}{t!(k+1)} \langle \q_{k,l+t}\left(b^{\otimes k};\otimes^l_{j = 1}\ga_j \otimes \ga^{\otimes l}\right), b \rangle_L + \sum_{t} \frac{1}{t!}\q_{-1,t+l}\left(\otimes^l_{j = 1}\ga_j \otimes \ga^{\otimes l}\right).
	\end{equation}
	Similar to \cite[Lemma~4.37]{Hug}, we find:
	\begin{nlemma}
		\label{le:v of q -1}
		\begin{equation}
		v(\qb_{-1,l})(\eta) = (-1)^{|v||\eta|'} \langle \qb_{0,l}(\eta),v(b) \rangle_L + (-1)^{|v||\eta|}\qb_{-1,l+1}(\eta \otimes v(\ga)).
	\end{equation}
	\end{nlemma}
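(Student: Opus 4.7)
The plan is to apply the derivation $v$ to both summands in the definition of $\qb_{-1,l}(\eta)$ via the Leibniz rule, and to group the resulting terms according to whether $v$ differentiates a copy of $b$ or a copy of $\ga$.

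First I would handle the $v(b)$ contributions. These arise only in the first sum, where $v$ can hit any one of the $k+1$ copies of $b$: the $k$ boundary insertions in $\q_{k,l+t}$ together with the single $b$ in the second slot of the pairing. By cyclic symmetry of $\langle \q_{k,l+t}(\,\cdot\,;\eta\otimes\ga^{\otimes t}),\,\cdot\,\rangle_L$ (Proposition~\ref{cyclic symmetry}), each of these $k+1$ possibilities equals, up to a common sign, the term obtained by placing $v(b)$ in the pairing slot. The resulting factor $k+1$ cancels the combinatorial coefficient $\frac{1}{k+1}$, and the definition of $\qb_{0,l}$ then collapses the remaining sum to $(-1)^{|v||\eta|'}\langle \qb_{0,l}(\eta),v(b)\rangle_L$.

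Next I would handle the $v(\ga)$ contributions, which arise in both sums. Since $|\ga|=2$ is even, the tensor powers $\ga^{\otimes t}$ are symmetric, so differentiating any one of the $t$ copies of $\ga$ yields the same term; this produces a factor of $t$ which combines with $\frac{1}{t!}$ to give $\frac{1}{(t-1)!}$. After the index shift $t\mapsto t+1$, the contributions from the first and the second sum assemble precisely into the two summands of $\qb_{-1,l+1}(\eta\otimes v(\ga))$, with overall sign $(-1)^{|v||\eta|}$ coming from commuting $v$ past $\eta$ into an interior slot.

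The main obstacle is the sign bookkeeping, in particular the cyclic-symmetry step for the $v(b)$ contributions: one must verify that the sign produced by rotating $v(b)$ from each boundary slot into the pairing slot is the same for every choice of slot, and matches the sign obtained by directly differentiating the pairing's $b$. This is also where the difference between the signs $(-1)^{|v||\eta|'}$ and $(-1)^{|v||\eta|}$ in the two terms originates: $v(b)$ ends up in a pairing slot (shifted degree), whereas $v(\ga)$ ends up as an interior input (unshifted degree). Apart from the sign tracking, the argument closely parallels the proof of \cite[Lemma~4.37]{Hug}, with the only genuinely new ingredient being the extra $\q_{-1,l+t}$ summand, on which $v$ can only act through the $\ga$'s and so goes along for the ride in the $v(\ga)$ calculation.
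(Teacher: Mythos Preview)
Your proposal is correct and is exactly the argument the paper has in mind: the paper does not spell out a proof but simply states the lemma as ``Similar to \cite[Lemma~4.37]{Hug}'', and your outline---Leibniz on the defining sums for $\qb_{-1,l}$, cyclic symmetry to equalize the $k+1$ positions for $v(b)$ and cancel the $\tfrac{1}{k+1}$, symmetry of the even-degree $\ga$ to cancel the $\tfrac{1}{t!}$ and reindex---is precisely that argument, including your own reference to the same lemma.
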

The analogue of \cite[Lemma~4.38]{Hug} is:
\begin{nlemma}
	\label{le:e of q -1}
	\begin{equation}
		e\partial_e(\qb_{-1,1})(\eta) = \langle \qb_{0,l}(\eta),e\partial_e(b) \rangle_L + \qb_{-1,l+1}(\eta \otimes c_1).
	\end{equation}
\end{nlemma}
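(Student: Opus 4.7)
The plan is to mimic the strategy used for Lemma \ref{le:v of q -1} (and \cite[Lemma~4.38]{Hug}), treating $e\partial_e$ as a $\Lambda$-linear derivation on $Q^e_U$ and on the tensors that build $\qb_{-1,l}$. Apply $e\partial_e$ to the two-line definition
\[
\qb_{-1,l}(\eta) = \sum_{k,t} \frac{1}{t!(k+1)} \bigl\langle \q_{k,l+t}(b^{\otimes k};\eta\otimes \ga^{\otimes t}), b \bigr\rangle_L + \sum_{t} \frac{1}{t!}\q_{-1,l+t}(\eta\otimes \ga^{\otimes t}).
\]
Three kinds of contributions arise: (i) $e\partial_e$ hits one of the $b$'s; (ii) $e\partial_e$ hits the scalar $e$-monomials $e^{\mu(\beta)/2}$ or $e^{\mu(\beta)}$ hiding in $\q_{k,l+t}$, $\q_{-1,l+t}$; (iii) $e\partial_e$ hits a copy of $\ga$. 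Since $\ga\in A^*(X;Q_U)$ has no $e$-dependence, (iii) vanishes.

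For (i), there are $k+1$ symmetric choices of which $b$ in the block $b^{\otimes(s_0+\dots+s_k)}$ inside the first sum to differentiate, plus a single $b$ in the outer pairing $\langle\cdot,b\rangle_L$. Pulling $e\partial_e(b)$ into any of the interior slots and using cyclic symmetry (Proposition \ref{cyclic symmetry}) allows one to rewrite each such contribution as a term of the form $\langle \q^{b,\ga}_{1+s,l+t}(\ldots;\ldots),e\partial_e(b)\rangle_L$. The factor $\tfrac{1}{k+1}$ in the definition is precisely what cancels the $k+1$-fold multiplicity of such rewrites, and after summing over $k$ and $t$ one recognises the entire contribution as $\langle \qb_{0,l}(\eta), e\partial_e(b)\rangle_L$; this is an exact replay of the combinatorics used in \cite[Lemma~4.38]{Hug}. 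The outer $b$-contribution gets absorbed into this sum by the same cyclic-symmetry move.

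For (ii), $e\partial_e$ acting on $Q^{\omega(\beta)}e^{\mu(\beta)/2}$ (resp.\ $Q^{\omega(\beta)}e^{\mu(\beta)}$) produces a scalar multiple of the same monomial by $\mu(\beta)/2$ (resp.\ $\mu(\beta)$). Since $L$ is Lagrangian, $\mu(\beta)=2c_1(\beta)=2\int_\beta c_1$, so both exponents agree with the insertion of one further copy of $c_1$ by the divisor property (Proposition \ref{divisor property}); note that the first sum carries the factor $1/(k+1)$ while the second carries no such factor, matching the two halves of the definition of $\qb_{-1,l+1}$ applied to $\eta\otimes c_1$. Summing over $\beta$ and over the extra marked point as a new insertion, one sees that all contributions of type (ii) assemble to $\qb_{-1,l+1}(\eta\otimes c_1)$.

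The expected main obstacle is sign-tracking: one must check that the sign introduced by pulling $c_1$ (an even class) through $\eta\otimes \ga^{\otimes t}$ under the divisor property is trivial, and that the cyclic symmetry used to collect the $b$-derivative terms produces no net sign once reindexed into $\qb_{0,l}$. As in the proof of Lemma \ref{le:v of q -1}, this is a mechanical but slightly delicate bookkeeping task; because both $c_1$ and $e\partial_e(b)$ are of even total degree in the $e$-grading, no $(-1)^{|v||\eta|}$-type prefactors appear, which is why the final formula is cleaner than that of Lemma \ref{le:v of q -1}.
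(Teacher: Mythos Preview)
Your proposal is correct and follows the same approach as the paper: the paper gives no independent proof, stating only that the lemma is ``the analogue of \cite[Lemma~4.38]{Hug},'' and your argument is precisely the natural unpacking of that analogy (differentiate the definition of $\qb_{-1,l}$, collect the $b$-derivatives via cyclic symmetry, and convert the $e\partial_e$-action on the Novikov monomials into a $c_1$-insertion via the divisor property, noting that $\gamma$ carries no $e$-dependence). One small point to watch: the paper writes $e^{\mu(\beta)}$ rather than $e^{\mu(\beta)/2}$ in the definition of $\q_{-1,l}$, so the factor you extract in case (ii) for the second sum is $\mu(\beta)$ rather than $\mu(\beta)/2$; be explicit that this matches the divisor insertion (using $\int_\beta c_1 = \mu(\beta)/2$) only if that exponent is a typo for $\mu(\beta)/2$, or else track the discrepancy.
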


	\begin{defi}
		\label{weak bounding pair}
		A \emph{weak bounding pair} over $U$ is a bulk-deformation pair $(b,\ga)$ over $U$ such that \begin{equation}
			\qb_{0,0} = c \cdot 1,
		\end{equation}
		for some $c \in \mathcal{I}Q_U^e$. We then say that $b$ is a \emph{bounding cochain} for $\ga$ and that $L$ is \emph{weakly unobstructed} for the bulk deformation $\ga$.
	\end{defi}
	\begin{remark}
		Establishing the existence of a bounding cochain for a general $\ga \in \mathcal{I}_UA^*(X;Q_U)$ is a difficult problem. Under simplifying cohomological assumptions, one can prove their existence. See for example \cite[Proposition~3.4]{ST1}.
	\end{remark}
	The next lemma shows that if $L$ is weakly unobstructed for some $\ga$ we can modify the bounding cochain to incorporate the deformation by $c_1$. In other words: bulk-deformations in the $c_1$ direction are unobstructed.
	\begin{nlemma}
		\label{bulk-deformation assumtion holds}
		Suppose that $(b,\ga) \in \mathcal{I}_U A^*(L,Q_U^e) \oplus \mathcal{I}_UA^*(X;Q_U)$ is a weak bounding pair over $U$. Then, there exists a vector space $U' \supset U$ with and a weak bounding pair $(b',\ga') \in \mathcal{I}_{U'} A^*(L,Q_{U'}^e) \oplus \mathcal{I}_{U'}A^*(X;Q_{U'})$ over $U'$ such that Assumption \ref{bulk-deformation assumption} holds. Furthermore, under the natural map $Q_{U'} \rightarrow Q_U$, we have that $(b',\ga') \mapsto (b,\ga)$.
	\end{nlemma}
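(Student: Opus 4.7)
The plan is to adjoin a single new direction to $U$ corresponding to $c_1 \in H^2(X)$ and to construct $b'$ by a rescaling of the Novikov variable $e$ that encodes the equality between an interior $c_1$-insertion and the action of $e\partial_e$ on Maslov-index weights. First, set $U' := U \oplus \CC\langle w\rangle$ with $|w|=2$ and extend the embedding $U \hookrightarrow H^*(X)$ by $w \mapsto c_1$; let $\sigma$ be the degree-$0$ formal coordinate on $U'[2]$ dual to $w$, so that $Q^e_{U'} \cong Q^e_U[[\sigma]]$ and the natural map $Q^e_{U'} \to Q^e_U$ is $\sigma \mapsto 0$.

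A preparatory observation is that, because $L$ is Lagrangian, $TX|_L \cong TL \otimes_\RR \CC$, and hence $c_1(TX)|_L = 0$ in $H^2(L;\CC)$. I will use this to choose a closed representative $\tilde{c}_1 \in A^2(X;\CC)$ of $c_1$ satisfying $\tilde{c}_1|_L = 0$, placing $\tilde{c}_1$ in $A^2(X,L)$ where the divisor property (Proposition \ref{divisor property}) applies. Now define $\ga' := \ga + \sigma\,\tilde{c}_1$ and $\YY := \partial_\sigma \in Der_{\Lambda^e}Q^e_{U'}$. The checks that $\ga'$ is a bulk-deformation parameter over $U'$ (it is closed, of degree $2$, and $[\ga'] = \Gamma_U + \sigma\cdot c_1 = \Gamma_{U'}$ under the extended embedding) and that $[\YY(\ga')] = c_1$ are immediate, so Assumption \ref{bulk-deformation assumption} holds for $(b',\ga')$ once $b'$ is in place.

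The crux is the construction of $b'$. The guiding observation is that each interior insertion of $\tilde{c}_1$ into a disk in class $\beta$ produces, via the divisor property, the scalar factor $\int_\beta c_1 = \mu(\beta)/2$, which is exactly the effect of the derivation $e\partial_e$ on the coefficient $e^{\mu(\beta)/2}$. Letting $\phi := \exp(\sigma\,e\partial_e)$ denote the $\Lambda$-linear ring endomorphism of $Q^e_{U'}$ that fixes $\sigma$ and sends $e \mapsto e\exp(\sigma)$, I will set $b' := \phi(b) \in \mathcal{I}_{U'}A^*(L;Q^e_{U'})$ and $c' := \phi(c)$. The main calculation is to verify $\q^{b',\ga'}_{0,0} = c'\cdot 1$: expand $(\ga + \sigma\tilde{c}_1)^{\otimes t}$ binomially using graded symmetry of $\q_{k,t}$ in its interior inputs, apply the divisor property $n$ times to reduce the $n$ copies of $\tilde{c}_1$ to the scalar $(\mu(\beta)/2)^n$, and recognize the resulting sum $\sum_n \tfrac{(\sigma\mu(\beta)/2)^n}{n!}\,e^{\mu(\beta)/2}$ as $\phi(e^{\mu(\beta)/2})$. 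After interchanging summations, this identifies $\q^{b',\ga'}_{0,0}$ with $\phi(\q^{b,\ga}_{0,0}) = \phi(c)\cdot 1$. Sending $\sigma \mapsto 0$ recovers $(b,\ga)$ from $(b',\ga')$, as $\phi|_{\sigma=0} = \mathrm{id}$.

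The main obstacle is the combinatorial bookkeeping of this last identification, in particular coordinating the action of $\phi$ on the tensor powers $(b')^{\otimes k}$ (via its Leibniz property) with its action on the Novikov factor $e^{\mu(\beta)/2}$ inside $\q_{k,t} = \sum_\beta Q^{\omega(\beta)}e^{\mu(\beta)/2}\q^\beta_{k,t}$. This is routine but demands care with the symmetrization factors arising from the binomial expansion of $(\ga + \sigma\tilde{c}_1)^{\otimes t}$ and with the order in which the divisor property is applied to successive copies of $\tilde{c}_1$.
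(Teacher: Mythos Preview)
Your proposal is correct and follows essentially the same route as the paper: both extend $U$ by a single direction corresponding to $c_1$, set $\ga' = \ga + t\,C_1$ with $C_1$ a representative of $c_1$ restricting to zero on $L$, and define $b'$ by applying (a version of) $\exp(t\,e\partial_e)$ to $b$, using the divisor axiom to identify each interior $C_1$-insertion with the scalar $\mu(\beta)/2$ and hence with the derivation $e\partial_e$ on the weight $e^{\mu(\beta)/2}$. The only cosmetic difference is that you carry out the computation directly via a binomial expansion and recognition of $\phi$ as the ring endomorphism $e \mapsto e\exp(\sigma)$, whereas the paper reaches the same identity $\q^{b',\ga'}_{0,0} = \exp(t\,e\partial_e)\,\q^{b,\ga}_{0,0}$ by iterating a differential relation (\cite[Lemma~4.38]{Hug}) of the form $e\partial_e\,\q^{b,\ga}_{0,0} = \q^{b,\ga}_{0,1}(C_1) - \q^{b,\ga}_{1,0}(e\partial_e b)$.
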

	\begin{proof}
		Let $U' = U \oplus \CC$. We then have $Q_{U'} \cong Q_U[[t]]$ for some parameter $t$. Let $\ga' = \ga + tC_1$, for some choice of representative $C_1$ of the first Chern class of $X$, chosen such that $C_1$ also represents $\frac{\mu}{2} \in H^2(X,L)$. Let \begin{equation}
			b' = b - \sum_{i \geq 1} \frac{\left(t e\partial_e \right)^i}{i!} b.
		\end{equation} 
		First observe that by the divisor axiom \ref{divisor property}, we have: \begin{equation}
			\qb_{0,l}(C_1^{\otimes l}) = \sum_{\beta} \left(\frac{\mu(\beta)}{2}\right)^l \q^{\beta, b,\ga}_{0,0}.
		\end{equation}
		Combining this with \cite[Lemma~4.38]{Hug} with $k = l = 0$ yields: \begin{equation}
			e\partial_e \qb_{0,0} = \qb_{0,1}(C_1) - \qb_{0,0}(e\partial_e(b)).
		\end{equation}
		Repeated application of \cite[Lemma~4.38]{Hug} then allows us to compute: \begin{equation}
			(e\partial_e)^r \qb_{0,0} = \sum_{\substack{r, i_1, \dots, i_s\\ r+i_1 + \dots + i_s = k}} \binom{k}{r,i_1, \dots, i_s} (-1)^s \qb(C_1^{\otimes r}; (e\partial_e)^{i_1}b, \dots, (e\partial_e)^{i_s}b).
		\end{equation} We thus have:
		\begin{equation}
			\label{bulk-deformation computation}
			\mathfrak{q}^{b',\ga'}_{0,0} = \exp(t e\partial_e) \qb_{0,0}.
		\end{equation}
		As $\qb_{0,0} =  c \cdot 1$, this then shows that $(b', \ga')$ is a weak bounding cochain with $c' = \exp(t e\partial_e) c$. Thus Assumption \ref{bulk-deformation assumption} holds with $Y = \partial_t$.
	\end{proof}
	\subsection{Relative cyclic homology of the Fukaya $A_\infty$-algebra}
	Let $L \subset X$ be a Lagrangian submanifold satisfying assumptions \ref{assumptions}. Let $(\ga,b)$ be a weak bounding pair satisfying assumption \ref{bulk-deformation assumption}. Solomon and Tukachinsky \cite[Theorem~1]{ST3} construct an $A_\infty$-algebra $A^{ST}$ using the operations $\q^{ST}_{k,0}$. We have different sign conventions for our operations $\q$, but the following still holds.
	\begin{defi}
		\label{defi: Fukaya A infinity algebra}
		Let $(A := CF^*(L,L)[e] := A^*(L;Q^e),\mathfrak{m}_k := \qb_{k,0},\langle \;, \; \rangle_L, 1)$. This forms an $n$-dimensional, strictly unital and cyclic $A_\infty$-algebra, it is Euler-graded with Euler vector field $E = e\partial_e + E_U$. The grading operator is defined by $Gr(f\al) = (|f| + |\al|)f\al$ for $f \in Q^e$ and $\al \in A^*(L)$.
	\end{defi}
	\begin{defi}
		Let $CF^*(L,L) := CF^*(L,L)[e] \otimes_{Q^e} Q$ be the $A_\infty$-algebra obtained by setting $e=1$.
	\end{defi}
	Let $HC^-_*(A,L)$ be the relative cyclic homology associated to the $A_\infty$-algebra $A$ (viewed as an $A_\infty$-category with one object, $L$), relative to the object $L$ (see Example \ref{eg: relative cyclic chains}). \begin{nlemma}
		\label{lem: chain level connection relative Fuk}
		The connection $\nabla^L_v: CC^-_*(A,L) \rightarrow CC^-_*(A,L)$ is given on the chain level by: \begin{equation}
			\nabla^L_v(a,\al) = (v(a) - \frac{v(c)a}{u}, \nabla_v \al).
		\end{equation}
	\end{nlemma}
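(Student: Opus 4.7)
The plan is to recognize this as a direct specialization of the third example at the end of the previous subsection — the one treating an $R$-linear $A_\infty$-category $\MD$ together with an object $L \in \MD$ whose curvature is $\m_0^L = c \cdot \bfe_L$. Taking $R = Q^e$, $\MD$ equal to the one-object $A_\infty$-category given by $A = CF^*(L,L)[e]$, and $L$ the unique object, the chain-level formula produced there agrees verbatim with the one in the statement, so the task reduces to verifying that the hypotheses of the example apply.

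To confirm the setup: $A$ is a strictly unital $Q^e$-linear $A_\infty$-category with one object $L$ whose curvature is $\m_0^L = \qb_{0,0}$. By the definition of a weak bounding pair, $\qb_{0,0} = c \cdot 1$, and under the identification of the constant function $1 \in A^*(L;Q^e)$ with the strict unit $\bfe_L \in hom_A(L,L)$ this reads $\m_0^L = c \cdot \bfe_L$. The trivial category $\MK$ is equipped with the matching curvature $c \cdot 1$, and the inclusion functor $F: \MK \rightarrow A$ sending $1 \mapsto \bfe_L$ is then a strictly unital $Q^e$-linear $A_\infty$-functor of weakly curved categories, so the relative cone complex $CC^-_*(A,L)$ and Sheridan's homotopy $H_v$ are well defined.

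The substantive content of the example is that $H_v$ vanishes identically on $CC^-_*(\MK)$. I would verify this by inspecting each of the three summands $H^1_v, H^2_v, H^3_v$: every argument appearing inside an $F^*$-slot in those formulas is a scalar multiple of the $\MK$-unit (with one slot replaced by $v(\m_\MK)$, which is supported on the nullary input $v(c) \cdot 1$ since $v(\m_2^\MK) = 0$). Because the distinguished $\al_0$-slot always contributes an additional copy of the unit, the total arity of each surviving $F^k$ is at least two, so strict unitality of $F$ forces it to vanish. Once $H_v \equiv 0$ is recorded, the defining formula $\nabla^F_v(a,\al) = (\nabla^\MK_v a, (-1)^{|a|}H_v(a) + \nabla^A_v \al)$ collapses to $(\nabla^\MK_v a, \nabla^A_v \al)$, and combining this with the absolute Getzler-Gauss-Manin connection on $CC^-_*(\MK) \cong Q^e[[u]]$ — which for a one-object curved category of curvature $c$ acts as $\nabla^\MK_v = v - v(c)/u$ — yields the claimed expression.

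The main (and essentially sole) obstacle is the combinatorial bookkeeping needed to confirm $H_v \equiv 0$ summand by summand; this is a routine strict-unitality argument but somewhat tedious to spell out. Because the ambient example has already been stated in the required generality, once this verification is in hand no further work is required, and the lemma follows by specialization.
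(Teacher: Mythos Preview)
Your approach is correct and coincides with the paper's: the lemma is stated there without proof, since it is precisely the specialization of the $R$-linear example (the last of the three examples following the canonical-relative-connection discussion) to $\MD = A$ and $R = Q^e$. One minor sharpening of your sketch: for $H^2_v$ and $H^3_v$ the cleanest reason for vanishing is that $v(F^*) \equiv 0$ (since $F^k = 0$ for $k \geq 2$ and $F^1$ is $R$-independent), rather than strict unitality of $F$ applied to the inputs.
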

In the proof of the fact that the cyclic open-closed map respects the absolute connections in \cite{Hug}, we modified the connection on the cyclic homology of the Fukaya category by a homotopy $J_v$, which was given by:
\begin{defi}
	For $v \in Der_{\Lambda} Q^e$ define the operator $J_v(\al) = i\{ \phi_v \}\al$. Here the length-zero Hochschild cochain $\phi_v$ is given by $\phi_v := v(b) \in A$, and the operation $i \{ \phi_v\}$ is as defined by \cite{Ge}, or see \cite[Definition~3.12]{Hug} for a definition in our setup.
	\end{defi}
Here, we don't modify the connection on cyclic homology, but directly employ the main result obtained (\cite[Proposition~4.68]{Hug}). To show that the relative connection is respected by the relative cyclic open-closed map, we then need the following observation, which follows directly from the definition.
\begin{nlemma}
	\label{lem: Jv on unit}
	The operator $J_v$ satisfies $J_v\mathbf{e}_L = v(b)$.
\end{nlemma}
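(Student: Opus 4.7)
The plan is to unwind the definition of the Getzler-type operator $i\{\phi_v\}$ from \cite[Definition~3.12]{Hug} and apply it to the length-zero cyclic chain $\mathbf{e}_L = \mathbf{e}_L[\,]$, with $\phi_v = v(b) \in A$ viewed as a length-zero Hochschild cochain. In this formalism, $i\{\phi\}$ acts on $\alpha_0[\alpha_1|\dots|\alpha_k]$ by summing over ways of feeding $\phi$, together with some consecutive block of the $\alpha_i$'s, into an $A_\infty$ operation $\mathfrak{m}_j$ while keeping the remaining entries as bar inputs. For the input $\mathbf{e}_L[\,]$ there are no bar entries to distribute, so only the terms where $\mathfrak{m}_j$ takes $\mathbf{e}_L$ and $\phi_v$ (in some order) as its inputs can contribute.

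From there the argument is immediate from strict unitality (Proposition \ref{unit property}): the only non-vanishing case is $j=2$, where $\mathfrak{m}_2(\mathbf{e}_L,\phi_v) = \phi_v$ (up to a sign that is $+1$ by our conventions), while every term using $\mathfrak{m}_j$ with $j\ge 3$ is killed because $\mathbf{e}_L$ would appear as a non-trivial input. This leaves a single surviving term giving the length-zero chain $\phi_v[\,] = v(b)$, as claimed.

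The only obstacle is bookkeeping: one must check that the overall sign coming from the definition of $i\{\phi_v\}$ combined with the unit axiom is $+1$, and verify carefully that no length-$\ge 1$ chain slips through (for instance via a term of shape $\mathbf{e}_L[\phi_v]$), which is excluded by the precise convention used in \cite[Definition~3.12]{Hug}. Modulo these conventions, the proof is a one-line unfolding of the definition followed by an application of the strict unit axiom.
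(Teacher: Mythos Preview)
Your proposal is correct and matches the paper's approach: the paper states the lemma without proof, treating it as an immediate consequence of the definition of $i\{\phi_v\}$ together with strict unitality, which is exactly what you do by unwinding \cite[Definition~3.12]{Hug} on the length-zero chain $\mathbf{e}_L[\,]$.
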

	\section{The relative quantum TE-structure}
	\label{section: the relative quantum TE-structure}
	\subsection{The T-structure}
	Solomon-Tukachinsky \cite{ST2} extend the quantum connection to a cone complex involving the Lagrangian. We will first adapt their definition to take into account the variable $u$, and then extend the connection in the $u$-direction. 
	
	Let $L \subset X$ be a Lagrangian submanifold satisfying assumptions \ref{assumptions}. Let $(\ga,b)$ be a weak bounding pair satisfying assumption \ref{bulk-deformation assumption}, with curvature $c$. First consider the chain map\begin{align*}
		\mathfrak{i}: A^*(X;\RR) &\rightarrow \RR[-n]\\
		\eta &\mapsto (-1)^{|\eta|}\int_L \eta.
	\end{align*}
	Here $\RR$ is equipped with the trivial differential. Let $C(\mathfrak{i}) = \mathbb{R}[-n-1] \oplus A^*(X)$ be the associated mapping cone, with differential: \[
	d(f,\eta) := (\mathfrak{i}(\eta),d\eta).
	\]
	\begin{defi}
		\emph{Relative quantum cohomology} is given as a vector space by: $QH^*(X,L):= H^*(C(\mathfrak{i}))$. Following \cite{ST2}, we extend the quantum connection to the relative quantum connection \begin{equation}
			\nabla^L: Der_{\Lambda} Q^e \otimes QH^*(X,L;Q_U^e)[[u]] \rightarrow u^{-1}QH^*(X,L;Q_U^e)[[u]],
		\end{equation} 
		which is given on the chain level by \begin{equation}
		\nabla_v^L(f,\eta) = \left(v(f) + u^{-1}\left(v(c)f + v(\qb_{-1,1})(\eta)\right),\nabla_v \eta\right).
		\end{equation}
		The connection $\nabla_v$ is as defined in \cite[Definition~4.4]{Hug}
	\end{defi}
	\begin{remark}
		\cite{ST2} define this connection, but without the $u$-dependence. Their connection is related to ours by setting $u = 1$, up to some sign differences.
	\end{remark}
	\begin{nlemma}[c.f.\ {\cite[Lemma~4.4]{ST2}}]
		The connection $\nabla_v^L$ descends to cohomology.
	\end{nlemma}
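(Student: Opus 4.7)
The plan is to verify that $[\nabla_v^L, d] = 0$ on the chain complex $C(\mathfrak{i})[[u]]$, from which the lemma follows immediately. Applied to $(f,\eta)$, the commutator decomposes into an $\RR[-n]$-component and an $A^*(X)$-component. The second reduces to $[\nabla_v, d]\eta = 0$ on differential forms, which is exactly the statement that the absolute quantum connection $\nabla_v$ descends to $QH^*(X;Q^e_U)[[u]]$ proved in \cite{Hug}, so I would focus entirely on the first component.

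Unpacking definitions, the contributions involving $f$ alone in the first component cancel because $v$ commutes with integration over $L$ (as $v \in Der_\Lambda Q^e$ acts only on coefficients and $L$ is $v$-independent). What remains, at order $u^{-1}$, is the single identity
\begin{equation*}
v(c)\,\mathfrak{i}(\eta) + v(\qb_{-1,1})(d\eta) \;=\; \mathfrak{i}\bigl(\text{$u^{-1}$-part of } \nabla_v \eta\bigr),
\end{equation*}
which needs to be verified. Using the chain-level formula for $\nabla_v$ from \cite[Definition~4.4]{Hug}, the right-hand side is $\int_L$ of a specific bulk-deformed closed operation applied to $v(\ga)$ and $\eta$.

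I would produce the required identity from three inputs: first, the bulk-deformed version of Proposition \ref{boundary of q -1 operation}, which rewrites $\qb_{-1,1}(d\eta)$ as a combination of pairings $\langle \qb_{0,*}(\ldots), \qb_{0,*}(\ldots)\rangle_L$ and the boundary integral $\int_L i^*\q^{\ga}_{\emptyset,*}(\eta)$; second, Lemma \ref{le:v of q -1}, which expresses $v(\qb_{-1,l})$ in terms of $\langle \qb_{0,l}(\cdot), v(b)\rangle_L$ and $\qb_{-1,l+1}(\cdot \otimes v(\ga))$, and which I would differentiate once more in $v$; and third, the weak bounding condition $\qb_{0,0} = c\cdot 1$, which collapses pairing terms with an empty side into $c\cdot \int_L$, thereby producing the $v(c)\mathfrak{i}(\eta)$ summand after $v$ acts on the factor of $c$.

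The main obstacle is sign and combinatorial bookkeeping: the bulk-deformed Stokes identity contributes boundary terms of $\MM_{0,l}(\beta)$ from disk bubbling together with terms from forgetting the first interior marked point, and these must pair exactly against the terms in $v(\qb_{-1,1})(d\eta)$, $v(c)\mathfrak{i}(\eta)$, and $\mathfrak{i}$ applied to the $u^{-1}$-part of $\nabla_v\eta$. The argument mirrors the non-equivariant statement \cite[Lemma~4.4]{ST2} but with an extra derivation in $v$ and the new $v(c)$ contribution coming from the curvature of the bounding pair; once the signs are reconciled using the conventions of Section \ref{section: q -1 operations}, the identity falls out and the lemma follows.
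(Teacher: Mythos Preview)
Your approach is essentially the same as the paper's: reduce to the first component of the commutator, isolate the identity $v(c)\,\mathfrak{i}(\eta) + v(\qb_{-1,1})(d\eta) = \mathfrak{i}(\text{$u^{-1}$-part of }\nabla_v\eta)$, and derive it from the boundary relation for $\q_{-1}$ together with the weak bounding condition $\qb_{0,0}=c\cdot 1$.

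Two small remarks. First, the detour through Lemma~\ref{le:v of q -1} (and ``differentiating once more'') is unnecessary: the paper simply applies the bulk-deformed version of Proposition~\ref{boundary of q -1 operation} with $l=1$ to obtain $\qb_{-1,1}(d\eta)$ as a sum of $\langle\qb_{0,1}(\eta),\qb_{0,0}\rangle_L$ and $\int_L i^*\q^\ga_{\emptyset,1}(\eta)$, simplifies, and then applies $v$ once to both sides. Second, you omit the step that turns $\langle\qb_{0,1}(\eta),c\cdot 1\rangle_L$ into $c\int_L\eta$ (up to sign): this needs the top-degree property (Proposition~\ref{top degree property}) and the energy-zero property (Proposition~\ref{energy zero property}), which kill all contributions to $(\qb_{0,1}(\eta))_n$ except the $\beta_0$ term $\eta|_L$. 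With those two adjustments your outline matches the paper's proof.
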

	\begin{proof}
		First note by keeping track of signs, it follows directly from \cite[Proposition~3.13]{ST3} and the definition of the supercommutator that $[d,\nabla_v^L] = d \circ \nabla_v^L - (-1)^{|v|} \nabla_v^L \circ d$.
		Next observe that: \begin{align}
			\label{eq:qcoh d circ nabla}
			d \circ \nabla_v^L &= \left((-1)^{|v| + |\eta|}\left(\int_L v(\eta) - u^{-1}\int_L(v(\gamma) \star \eta)\right),d\nabla_v \eta\right),\\
			\label{eq:qcoh nabla circ d}
			\nabla_v^L \circ d &= \left( u^{-1} v(\qb_{-1,1})(d\eta) + (-1)^{|\eta|}\left( v(\int_L \eta) + u^{-1}v(c) \int_L \eta\right), \nabla_v(d\eta)  \right).
		\end{align}
		Lemma \ref{boundary of q -1 operation} shows that: \begin{align}
			\qb_{-1,1}(d\eta) = (-1)^{|\eta|}\langle \qb_{0,1}(\eta),\qb_{0,0} \rangle + (-1)^{|\eta|+1}\int_L \mathfrak{q}^{\ga}_{\emptyset,1}(\eta).
		\end{align}
	Next, use the fact $\qb_{0,0} = c\cdot 1$, the top-degree property \ref{top degree property} and the energy-zero property \ref{energy zero property}, to rewrite this as:
	\begin{equation}
			\qb_{-1,1}(d\eta) = (-1)^{|\eta|+1}c \int_L \eta + (-1)^{|\eta|+1}\int_L \mathfrak{q}^{\ga}_{\emptyset,1}(\eta).
		\end{equation}
	Taking the derivative of this equation shows:\begin{equation}
			\label{v of qb -1}
			v(\qb_{-1,1})(d\eta) = (-1)^{|\eta|+1}v(c) \int_L \eta + (-1)^{|\eta|+1}\int_L v(\gamma) \star \eta.
		\end{equation}
	Substitution of equation \eqref{v of qb -1} into equations \eqref{eq:qcoh d circ nabla} and \eqref{eq:qcoh nabla circ d} shows that $[d,\nabla_{v}^L] = 0$ as required.
	\end{proof}
To compare the relative connection on relative quantum cohomology with the relative connection on relative negative cyclic homology, we need to change the signs.  We thus have:
\begin{defi}
	The sign-twisted relative connection is given by:
	\begin{equation}
		\nabla_v^{*,L}(f,\eta) = \left(v(f) - u^{-1}\left(v(c)f + v(\qb_{-1,1})(\eta)\right),\nabla^*_v \eta\right).
	\end{equation}
	The sign twisted absolute connection $\nabla^*_v$ was defined in \cite[Definition~4.10]{Hug}.
\end{defi}
\subsection{The E-structure}
\label{section: Euler-grading on relative quantum cohomology}
 Recall the Euler-grading $Gr^-: QH^*(X;Q^e)[[u]] \rightarrow QH^*(X;Q^e)[[u]]$ from \cite[Definition~4.6]{Hug}: for $\eta \in H^*(X;\RR)$ and $f \in Q_U^e[[u]]$, we have: \begin{equation}
	Gr^-(f\eta) = (|f| + |\eta| - n)f\eta = (2u\partial_u + 2E)(f)\al + 2f\mu(\al).
\end{equation}
where $\mu: H^p(X;Q_U^e) \rightarrow H^p(X;Q_U^e)$ is given by $\mu(\eta) = \frac{p-n}{2}\eta$.
\begin{defi}
	 Extend the Euler-grading to relative quantum cohomology by setting:
	\begin{equation}
		Gr_L^-(f,\eta) := ((2u\partial_u + 2E + 1)(f),Gr^-(\eta)).
	\end{equation}
\end{defi}
We will show that this grading operator makes $QH^*(X,L;Q^e)[[u]]$ into an Euler-graded T-structure.
\begin{remark}
	For an element $f \in Q^e[[u]]$ of homogeneous degree $|f|$, we have: $(2u\partial_u + 2E + 1)(f) = (|f| + 1)f$. This is the degree of $(f,0)$ in the cone complex, but shifted down by $n$. This is because the grading $Gr^-$ on quantum cohomology is also shifted down by $n$.
\end{remark}
A short computation shows:
\begin{nlemma}
	For $\lambda \in Q^e[[u]]$, the relative grading operator $Gr^-_L$ satisfies: \begin{equation}
		Gr^-_L(\lambda \cdot (f,\eta)) = (2u\partial_u + 2E)(\lambda)\cdot (f,\eta) + \lambda \cdot Gr^-_L(f,\eta). 
	\end{equation}
\end{nlemma}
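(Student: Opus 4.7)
The statement is a straightforward Leibniz-type identity, so the plan is to verify it component by component by observing that each piece in the definition of $Gr^-_L$ is compatible with multiplication by scalars in $Q^e[[u]]$.

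First I would unpack the definition: since $Gr^-_L(f,\eta) = ((2u\partial_u + 2E + 1)(f), Gr^-(\eta))$ and scalar multiplication acts componentwise, computing $Gr^-_L(\lambda \cdot (f,\eta))$ reduces to computing $(2u\partial_u + 2E + 1)(\lambda f)$ in the first slot and $Gr^-(\lambda \eta)$ in the second slot. For the first slot, I would note that both $u\partial_u$ and the Euler field $E$ are derivations of $Q^e[[u]]$, so $2u\partial_u + 2E$ satisfies the Leibniz rule; the extra $+1$ contributes only $\lambda f$, yielding
\begin{equation}
(2u\partial_u + 2E + 1)(\lambda f) = (2u\partial_u + 2E)(\lambda) \cdot f + \lambda \cdot (2u\partial_u + 2E + 1)(f).
\end{equation}

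For the second slot, I would use the explicit formula from the excerpt, $Gr^-(g \alpha) = (2u\partial_u + 2E)(g) \alpha + 2 g \mu(\alpha)$ for $g \in Q^e[[u]]$ and $\alpha \in H^*(X;\RR)$. Writing $\eta = g\alpha$ on basis elements and applying this formula to $\lambda \eta = (\lambda g)\alpha$, the Leibniz property of $2u\partial_u + 2E$ on $\lambda g$ immediately yields
\begin{equation}
Gr^-(\lambda \eta) = (2u\partial_u + 2E)(\lambda) \cdot \eta + \lambda \cdot Gr^-(\eta).
\end{equation}

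Combining the two identities componentwise then gives exactly the desired formula. There is no real obstacle here; the only thing to watch is that the shift $+1$ in the first component does not disrupt the Leibniz rule, since constants in $\KK$ are killed by $2u\partial_u + 2E$ and so the shift attaches only to $\lambda$ through the $\lambda \cdot Gr^-_L(f,\eta)$ term, not to $(2u\partial_u + 2E)(\lambda) \cdot (f,\eta)$. Extension from basis elements to arbitrary $\eta \in A^*(X;Q^e)[[u]]$ is by $Q^e[[u]]$-linearity.
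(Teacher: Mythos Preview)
Your proof is correct and is exactly the short computation the paper omits: both components are checked directly using that $2u\partial_u + 2E$ is a derivation on $Q^e[[u]]$, with the constant shift $+1$ in the first slot absorbed into the $\lambda \cdot Gr^-_L(f,\eta)$ term. There is nothing more to say; the paper simply records the statement with the remark that it follows from a short computation.
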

We next show:
\begin{nlemma}
	The relative grading operator satisfies: \begin{equation}
		[Gr^-_L, \nabla^L_v] = \nabla^L_{[2E,v]}.
	\end{equation}
\end{nlemma}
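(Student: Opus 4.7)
The plan is to verify the identity on the chain level by matching the two components of the cone complex. Abbreviating $G := 2u\partial_u + 2E + 1$, so that $Gr^-_L(f,\eta) = (G(f), Gr^-(\eta))$, the second (quantum) component of $[Gr^-_L,\nabla^L_v](f,\eta)$ is $[Gr^-,\nabla_v](\eta)$, which equals $\nabla_{[2E,v]}(\eta)$ by the absolute Euler-grading property of $\nabla_v$ established in \cite[Section~4.2]{Hug}.

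The first (scalar) component requires showing that
\[
G\bigl(v(f) + u^{-1}v(c)f + u^{-1}v(\qb_{-1,1})(\eta)\bigr) - v(G(f)) - u^{-1}v(c)G(f) - u^{-1}v(\qb_{-1,1})(Gr^-\eta)
\]
equals $[2E,v](f) + u^{-1}[2E,v](c)f + u^{-1}[2E,v](\qb_{-1,1})(\eta)$. I would treat the three summands separately. Since $v \in Der_\Lambda Q^e$ commutes with $u\partial_u$ and with scalars, $[G,v] = [2E,v]$, so the $v(f)$ contribution is immediate. For the $u^{-1}v(c)f$ term, use the identity $G\circ u^{-1} = u^{-1}(G-2)$ together with $|c|=2$ (forced by $\qb_{0,0} = c\cdot 1$ and Proposition \ref{degree property}); this gives $2E(v(c)) = [2E,v](c) + 2v(c)$, so the $-2$ picked up from commuting past $u^{-1}$ cancels the $+2v(c)$, leaving precisely $u^{-1}[2E,v](c)f$.

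The third summand reduces, again via $G\circ u^{-1} = u^{-1}(G-2)$, to the auxiliary identity
\[
(G-2)\bigl(v(\qb_{-1,1})(\eta)\bigr) - v(\qb_{-1,1})(Gr^-\eta) = [2E,v](\qb_{-1,1})(\eta).
\]
I would establish this by unpacking $\qb_{-1,1}(\eta)$ using its definition from Section \ref{section: bulk-deformations and bounding cochains} and applying Proposition \ref{degree property} to each summand, exploiting the fact that the weight $Q^{\omega(\beta)}e^{\mu(\beta)}$ carries degree $2\mu(\beta)$, which absorbs the $\beta$-dependent contribution to the dimensions of $\MM_{k+1,1+t}(\beta)$ and $\MM_{0,1+t}(\beta)$. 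The net effect is a single uniform grading shift, independent of $\beta$, from which the identity follows by matching the actions of $G$ and $Gr^-$ term by term.

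The main obstacle is this last auxiliary identity: it requires careful Euler-grading bookkeeping across the bulk ($\gamma$) and boundary ($b$) insertions in $\qb_{-1,1}$, and one must check that the degrees of $b$, $\gamma$, $e$, and the moduli dimension all conspire into a uniform shift. This computation is however entirely analogous to the one carried out for the absolute quantum connection in \cite[Section~4.2]{Hug}, and once in place the three pieces assemble into the first component of $\nabla^L_{[2E,v]}(f,\eta)$, completing the proof.
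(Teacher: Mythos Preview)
Your proposal is correct and follows essentially the same approach as the paper: split into the two cone components, use the absolute Euler-grading property of $\nabla_v$ for the quantum component, and handle the scalar component by treating the $v(f)$, $u^{-1}v(c)f$, and $u^{-1}v(\qb_{-1,1})(\eta)$ contributions separately via $[2E,v]$, $|c|=2$, and the degree of $\qb_{-1,1}$. The only cosmetic difference is that for the third summand the paper directly invokes the uniform degree formula $|\qb_{-1,1}(\eta)| = |\eta| - n + 1$ (yielding $2E(\qb_{-1,1})(\eta) = (\deg\eta - n + 1)\,\qb_{-1,1}(\eta)$) rather than unpacking the definition term-by-term, but that formula is precisely the ``single uniform grading shift'' you describe.
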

\begin{proof}
	Observe that: \begin{equation}
		Gr^-_L \circ \nabla_v^L(f,\eta) = \left( (2u\partial_u + 2E + 1) \left( v(f) + u^{-1} \left( v(c)f + v(\qb_{-1,1})(\eta) \right) \right),Gr^- \circ \nabla_v \eta \right),
	\end{equation}
	and \begin{multline}
		\nabla^L_v \circ Gr^-_L (f,\eta) = \\ \left( v\left( (2u\partial_u + 2E + 1)(f)\right) + u^{-1} \left( v(c)\cdot(2u\partial_u + 2E + 1)(f) + v(\qb_{-1,1})(Gr^-\eta) \right),\nabla_v \circ Gr^- \eta   \right).
	\end{multline}
	We thus find: \begin{multline}
		\label{eq:euler grading commutator}
		[Gr^-_L, \nabla^L_v] =  \left( [2E,v](f), \nabla_{[2E,v]}\eta \right) \\ + \left( u^{-1} \left( (2E - 2)(v(c))f + (2u\partial_u + 2E -1)\left(v(\qb_{-1,1})(\eta)\right) - v(\qb_{-1,1})(Gr^-\eta) \right), 0 \right).
	\end{multline}
	By definition of the curvature $|c| = 2$, so that $2E(c) = 2c$. Thus, $(2E - 2)(v(c)) = [2E,v](c)$. Next, by linearity, we can assume $\eta$ is of pure cohomological degree, so that $Gr^-\eta = (deg(\eta) - n) \eta + 2E(\eta)$. Here $deg(\eta)$ denotes the cohomological degree of $\eta$. We then have: \begin{align}
		&(2u\partial_u + 2E -1)\left(v(\qb_{-1,1})(\eta)\right) - v(\qb_{-1,1})(Gr^-\eta)\\
		&= 2E\left(v(\qb_{-1,1})\right)(\eta) + v(\qb_{-1,1})\left((2u\partial_u + 2E -1)\eta \right) - v(\qb_{-1,1})((deg(\eta) - n + 2E)\eta)\\
		&= 2E\left(v(\qb_{-1,1})\right)(\eta) + v(\qb_{-1,1})((n - 1 - deg(\eta)) \eta).
	\end{align}
	Now, as $|\qb_{-1,1}(\eta)| = |\eta| - n + 1$, we find: 
	\begin{equation}
		2E(\qb_{-1,1})(\eta) = 2E(\qb_{-1,1}(\eta)) - \qb_{-1,1}(2E(\eta)) = (deg(\eta) - n +1 ) \qb_{-1,1}(\eta).
	\end{equation}
	Combining the last two equations, we thus find that \begin{equation}
		(2u\partial_u + 2E -1)\left(v(\qb_{-1,1})(\eta)\right) - v(\qb_{-1,1})(Gr^-\eta) = [2E,v](\qb_{-1,1})(\eta).
	\end{equation}
	Substituting this into equation \eqref{eq:euler grading commutator} finishes the proof.
\end{proof}
As $QH^*(X,L;Q^e)[[u]]$ is an Euler-graded T-structure, it comes with a canonical connection in the $u$-direction (see \cite[Definition~2.6]{Hug}). Writing out the formulae we have:
\begin{equation}
	\nabla^L_{\partial_u}(f,\eta) = \left(\frac{Gr^-_L}{2u} - \frac{\nabla^L_E}{u}\right)(f,\eta) = \left(\partial_u f + \frac{f}{2u} - u^{-2}\left( cf + E(\qb_{-1,1})(\eta) \right), \nabla_{\partial_u} \eta\right).
\end{equation}
\begin{remark}
	Let $(X,L)$ be a monotone symplectic manifold, with a monotone Lagrangian. We can then set the Novikov parameter $Q = 1$, also consider the case without bulk deformations.  Then, the Euler vector field is given by $E = e\partial_e$, so that $E(\qb_{-1,1})(\eta) = \q_{-1,2}(c_1, \eta)$ by Lemma \ref{le:e of q -1}. Then, setting the universal Novikov parameter $e=1$ as well, we obtain a connection $\nabla^L_{\frac{d}{du}}: QH^*(X,L;\CC)[[u]] \rightarrow QH^*(X,L;\CC)[[u]]$. It is given on the chain level by: \begin{equation}
		\nabla^L_{\frac{d}{du}}(f,\eta) = \left(\frac{df}{du} + \frac{f}{2u} - u^{-2}\left(Wf + \q_{-1,2}(c_1,\eta) \right), \nabla_{\frac{d}{du}} \eta\right).
	\end{equation}
Here $W \in \CC$ is the disk potential: the count of Maslov index 2 holomorphic disks with boundary on $L$, it appears as for monotone symplectic manifolds we have $c = W$ for degree reasons.
\end{remark}
\begin{defi}
	The sign-twisted relative connection in the $u$-direction is given by:
	\begin{equation}
		\nabla^{*,L}_{\partial_u}(f,\eta) = \left(\frac{Gr^-_L}{2u} - \frac{\nabla^{*,L}_E}{u}\right)(f,\eta) = \left(\partial_u f + \frac{f}{2u} + u^{-2}\left( E(c)f + E(\qb_{-1,1})(\eta) \right), \nabla^*_{\partial_u} \eta\right).
	\end{equation}
\end{defi}
	\section{Relative cyclic open-closed map}
	\label{section: the relative cyclic open closed map}
	\subsection{Construction}
	Let $A = CF^*(L,L)[e]$ be the Fukaya $A_\infty$ algebra defined in Definition \ref{defi: Fukaya A infinity algebra}. In this section we will define a \emph{relative cyclic open-closed map} $\mathcal{OC}^-_L: HC^{-}_*(A,L) \rightarrow QH_*(X,L;Q^e)[[u]]$. First recall the definition of the cyclic open-closed pairing $\langle \_, \mathcal{OC}^-(\_) \rangle: QH^*(X;Q^e)[[u]] \otimes HC^-_*(A) \rightarrow Q^e[[u]]$.
	\begin{defi}[{\cite[Definition~4.53]{Hug}}] The cyclic open-closed pairing is defined by:
	\begin{equation}
		\langle \eta, \OC^-(\alpha) \rangle := (-1)^{|\alpha_0|(\epsilon(\widetilde{\alpha}) + 1)}\langle \qb_{k,1}(\widetilde{\alpha};\eta),\alpha_0 \rangle_L,
	\end{equation}
	where $\widetilde{\alpha} = \alpha_1 \otimes \dots \alpha_k$ for $\alpha = \alpha_0[\alpha_1,\dots, \alpha_k]$.
\end{defi}
	\begin{remark}
		In \cite{Hug} we used the notation $\OC^-_e$ for the above pairing, and $\OC^-$ for the map obtained by evaluation at $e =1$. In the above definition we have chosen to remove the subscript $e$ to avoid cumbersome notation. As we will not use the map obtained by evaluation at $e=1$, we hope this does not lead to any confusion.
	\end{remark}We now extend this definition.
	\begin{defi}
		\label{defi: relative open-closed pairing}
		The relative open-closed pairing $\langle \_, \mathcal{OC}^-_L(\_) \rangle: C^*(\mathfrak{i}) \otimes CC_*(A,L) \rightarrow Q^e[[u]]$ is given by (the $u$-linear extension of): \begin{equation}
			\langle (f,\eta), \mathcal{OC}^-_L(a,\alpha) \rangle := \langle \eta, \mathcal{OC}^-(\alpha) \rangle + (-1)^{1+(n+1)|a|} fa.
		\end{equation}
	\end{defi}
	\begin{nlemma}
		The relative open-closed pairing descends to homology, and thus defines a pairing $$QH^*(X,L;Q^e)[[u]]\otimes HC^-_{*}(A,L) \rightarrow Q^e[[u]].$$
	\end{nlemma}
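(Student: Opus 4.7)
The plan is to verify that the pairing commutes with the differentials on both factors, i.e., that for chain-level representatives one has
\begin{equation*}
\langle d(f,\eta), \OC^-_L(a,\alpha) \rangle = (-1)^{|(f,\eta)|+1}\langle (f,\eta), \OC^-_L(d^-_F(a,\alpha)) \rangle.
\end{equation*}
Expanding both sides using Definition \ref{defi: relative open-closed pairing} together with $d(f,\eta) = (\mathfrak{i}(\eta), d\eta)$ and $d^-_F(a,\alpha) = (d^-_\MK(a), d^-_A(\alpha) + (-1)^{|a|}F_*(a))$, the terms $\langle d\eta, \OC^-(\alpha)\rangle$ and $\pm\langle \eta, \OC^-(d^-_A\alpha)\rangle$ cancel by the chain-map property of the absolute cyclic open-closed map established in \cite{Hug}.

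After this cancellation, one is left with matching three terms: $\pm\,\mathfrak{i}(\eta)\cdot a$ arising from the cone differential on the left; $\pm\, a\cdot\langle \eta, \OC^-(\bfe_L) \rangle$ arising from $F_*(a) = a\bfe_L$ on the right; and $\pm\, f\cdot d^-_\MK(a)$ arising from the $\MK$-side differential. The third vanishes using the reduced model $CC^-_*(\MK) \simeq (\KK[[u]], 0)$ from Example \ref{eg: relative cyclic chains}, in which the differential on $\KK[[u]]$ is trivial.

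The heart of the proof is thus to establish the identity $\langle \eta, \OC^-(\bfe_L) \rangle = \mathfrak{i}(\eta) = (-1)^{|\eta|}\int_L\eta$. Unfolding the definition,
\begin{equation*}
\langle \eta, \OC^-(\bfe_L) \rangle = \langle \qb_{0,1}(\eta), 1 \rangle_L = (-1)^{|\eta|}\int_L \qb_{0,1}(\eta),
\end{equation*}
where the degree computation uses Proposition \ref{degree property}. Expanding $\qb_{0,1}(\eta)$ as a sum over $\beta \in H_2(X,L)$ and the bulk/boundary insertions, the energy-zero property (Proposition \ref{energy zero property}) gives $\q^{\beta_0}_{0,1}(\eta) = \eta|_L$ for the constant disk, while the top-degree property (Proposition \ref{top degree property}) forces the top-degree component of every other $\q^\beta_{s,1+t}(b^{\otimes s};\eta\otimes\ga^{\otimes t})$ to vanish. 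Consequently $\int_L \qb_{0,1}(\eta) = \int_L \eta$, yielding the identity.

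The main obstacle is the sign bookkeeping: the factor $(-1)^{1+(n+1)|a|}$ in Definition \ref{defi: relative open-closed pairing} is chosen precisely so that the surviving terms $\pm\,\mathfrak{i}(\eta)\cdot a$ and $\pm\, a\cdot\mathfrak{i}(\eta)$ cancel after accounting for the Koszul sign incurred when commuting $a$ past $\mathfrak{i}(\eta)$. Once the key identity $\langle \eta, \OC^-(\bfe_L) \rangle = \mathfrak{i}(\eta)$ is in hand, the remainder of the verification reduces to this routine sign check.
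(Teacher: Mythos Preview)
Your approach is essentially the same as the paper's: both arguments reduce to the chain-map property of the absolute cyclic open-closed pairing (the paper cites \cite[Lemma~4.56]{Hug} for the $b$-part and \cite[Lemma~4.61]{Hug} for the $B$-part, which you bundle together as ``the chain-map property''), together with the computation of $\langle \eta, \OC^-(a\bfe_L)\rangle$ via the top-degree and energy-zero properties. One small point: your stated identity $\langle \eta, \OC^-(\bfe_L)\rangle = \mathfrak{i}(\eta)$ is off by a sign---the paper's computation (equation labelled \eqref{eq:open closed on unit}) gives $\langle \eta, \OC^-(a\bfe_L)\rangle = (-1)^{|\eta|+n|a|}\mathfrak{i}(\eta)a$, since $\langle \qb_{0,1}(\eta), 1\rangle_L = (-1)^{|1|}\int_L \qb_{0,1}(\eta) = \int_L\eta$ rather than $(-1)^{|\eta|}\int_L\eta$---but as you note, this falls under the routine sign bookkeeping and does not affect the argument's validity.
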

	\begin{proof}
		First we show that: \begin{equation}
			\label{eq:relative OC respects b}
			\langle d(f,\eta),\mathcal{OC}^-_L(a,\alpha) \rangle+(-1)^{|\eta|}\langle (f,\eta),\mathcal{OC}^-_L(b(a,\alpha)) \rangle = 0.
		\end{equation}
		By the top degree property (\ref{top degree property}) and the energy zero property (\ref{energy zero property}) we have: \begin{equation}
			\label{eq:open closed on unit}
			\langle \eta, \mathcal{OC}^-(ae_L) \rangle = (-1)^{|a||\eta|} a\langle \mathfrak{q}_{0,1}(\eta), e_L \rangle_L = (-1)^{|\eta| + n|a|} \mathfrak{i}(\eta)a.
		\end{equation}
		We then compute: \begin{align*}
			\langle (f,\eta),\mathcal{OC}^-_L(b(a,\alpha)) \rangle &= \langle (f,\eta),\mathcal{OC}^-_L(0,ae_L + b(\alpha)) \rangle,\\
			\intertext{which, by lemma \cite[Lemma~4.55]{Hug} and equation \eqref{eq:open closed on unit} equals:}
			&= (-1)^{|\eta|+1}\langle d\eta,\mathcal{OC}^-(\alpha) \rangle + (-1)^{|\eta| + (n+1)|a|}\mathfrak{i}(\eta)a.
		\end{align*}
		Whereas:\begin{align}
			\langle d(f,\eta), \OC^-_L(a,\alpha) \rangle &= \langle (\mathfrak{i}(\eta),d\eta), \OC^-_L(a,\al) \rangle\\
			&=\langle d\eta, \OC^-(\al) \rangle + (-1)^{1+(n+1)|a|} \mathfrak{i}(\eta)a.
		\end{align}
		So that equation \eqref{eq:relative OC respects b} holds. To finish the proof, note that: 
		\begin{equation}
		\langle (f,\eta),\mathcal{OC}^-_L(0,B\alpha) \rangle = \langle \eta, \mathcal{OC}^-(B\alpha) \rangle = 0,
		\end{equation}
		where the last equality holds by \cite[Lemma~4.60]{Hug}.
	\end{proof}
	\begin{defi}
		The relative cyclic open closed map is defined by dualising the relative open-closed pairing in the first factor to obtain a map: \begin{equation}
			\OC^-_L: HC^-_*(A,L) \rightarrow QH_*(X,L;Q^e)[[u]].
		\end{equation}
	\end{defi}
	\subsection{The relative open-closed map respects TE-structures}
	Recall that we defined $(QH_*(X,L;Q^e)[[u]], (\nabla^{*,L})^\vee)$ to be the dual TE-structure to $(QH^*(X,L;Q^e)[[u]], \nabla^{*,L})$. We will show:
	\begin{nthm}
		\label{thm: relative OC is a morphism of TE-structures}
		The relative cyclic open-closed map $$\OC^-_L: (HC^-_*(A,L),\nabla^L) \rightarrow (QH_*(X,L;Q^e)[[u]],(\nabla^{*,L})^\vee)$$ is a morphism of TE-structures.
	\end{nthm}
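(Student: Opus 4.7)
The plan is to dualise and verify the compatibility directly at the level of the relative open-closed pairing of Definition \ref{defi: relative open-closed pairing}. Concretely, for $v \in Der_{\Lambda}Q^e$ we need to show the chain-level identity
\[
\langle \nabla^{*,L}_v(f,\eta),\mathcal{OC}^-_L(a,\alpha)\rangle \;=\; v\langle (f,\eta),\mathcal{OC}^-_L(a,\alpha)\rangle \;-\; (-1)^{|v|\cdot|(f,\eta)|}\langle (f,\eta),\mathcal{OC}^-_L(\nabla^L_v(a,\alpha))\rangle
\]
holds up to an exact term, and the analogous identity in the $\partial_u$-direction. I would first expand both sides using Definition \ref{defi: relative open-closed pairing}, Lemma \ref{lem: chain level connection relative Fuk}, and the chain-level formulas from Section \ref{section: the relative quantum TE-structure}. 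This splits the verification into two disjoint pieces: one supported on the subcomplex $(0,\alpha)\mapsto (0,\eta)$, and one mixing $(a,\alpha)$ with $(f,\eta)$.

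The first piece is exactly the statement that the absolute cyclic open-closed map $\mathcal{OC}^{-}$ respects the TE-structure, proved in \cite{Hug}. So the new content is the cancellation of the remaining terms, which are all products built from $fa$, $v(f)a$, $v(c)fa$, $fv(a)$, and the new term $v(\qb_{-1,1})(\eta)\cdot a$. The crucial identity is Lemma \ref{le:v of q -1}, which rewrites $v(\qb_{-1,1})(\eta)$ as $(-1)^{|v||\eta|'}\langle \qb_{0,1}(\eta),v(b)\rangle_L$ plus a term involving $v(\ga)$. The former is exactly what the absolute homotopy witnessing compatibility of $\mathcal{OC}^{-}$ produces when evaluated on the unit $\bfe_L$: by Lemma \ref{lem: Jv on unit}, $J_v\bfe_L=v(b)$, and by the top-degree and unit properties (Propositions \ref{top degree property} and \ref{unit property}) the $\q$-operations needed to close the computation simplify to the Poincaré pairing against $\bfe_L$, i.e.\ to integration over $L$. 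The $v(\ga)$-piece pairs against the corresponding $v(\ga)$-term in the absolute connection, giving the required cancellation.

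For the $u$-direction I would exploit that both sides are obtained from their T-structures by the canonical construction using the Euler grading, as explained in Section \ref{section: Euler-grading on relative quantum cohomology} for $QH_*(X,L)[[u]]$ and after Lemma \ref{lem: relative connection descends to cyclic homology} for $HC^-_*(A,L)$. Hence it suffices to verify that $\OC^-_L$ (i) intertwines $Gr^-_L$ with the analogous grading on $HC^-_*(A,L)$, which is a direct degree count using Proposition \ref{degree property}, and (ii) intertwines the two $\nabla_E$'s, which is the $T$-structure statement specialised to $v=E=e\partial_e+E_U$. The additional $e\partial_e$ contribution in $E$ is handled by Lemma \ref{le:e of q -1} in exactly the same way Lemma \ref{le:v of q -1} handled the generic $v$, producing the correct $E(c)f$ and $E(\qb_{-1,1})(\eta)$ terms in $\nabla^{*,L}_{\partial_u}$.

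The main obstacle is pure sign bookkeeping. Four independent sign sources must be reconciled: the shift in the mapping cone $C^*(\mathfrak{i})$, the shift in the relative cyclic complex $CC^-_*(\MC)[1]$, the reversal sign $\dagger(\al)$ distinguishing our $\q_{k,l}$ from Solomon-Tukachinsky's $\q^{ST}_{k,l}$, and the $u$-sesquilinearity of the relative open-closed pairing together with the sign $(-1)^{1+(n+1)|a|}$ in Definition \ref{defi: relative open-closed pairing}. It is precisely the sign $(-1)^{1+(n+1)|a|}$ that was chosen so that, once combined with the sign in $\mathfrak{i}(\eta)=(-1)^{|\eta|}\int_L\eta$ and the sign $(-1)^{|\eta|+n|a|}$ coming from the unit computation in \eqref{eq:open closed on unit} of the preceding lemma, the extension contributions arising from Lemmas \ref{le:v of q -1} and \ref{le:e of q -1} match on the nose. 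Once the signs are verified, the identity follows termwise and descends to homology.
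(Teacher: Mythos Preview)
Your proposal is correct and follows the paper's own strategy: reduce to the absolute statement from \cite{Hug}, isolate the extra extension terms, and cancel them using Lemmas~\ref{le:v of q -1} and~\ref{le:e of q -1}; then obtain the $u$-direction from the grading and the $\nabla_E$ case. The paper packages this by defining a relative homotopy pairing $G^L_v$ (equal to the absolute $G_v$ on the $(\eta,\alpha)$ slot and zero otherwise), so that the only genuinely new identity to verify is
\[
(-1)^{|\eta|+|v|+|a|}\langle \eta, G_v(a\bfe_L)\rangle \;=\; (-1)^{(n+1)|a|}\, v(\qb_{-1,1})(\eta)\,a,
\]
and similarly in the $e\partial_e$ case.

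Two small points. First, the ingredient you do not name is Proposition~\ref{unit on the horocycle 2}: the $\widetilde{G}_v$-part of the absolute homotopy involves the \emph{horocyclic} operation $\qb_{0,2;\perp_0}$, and evaluating it on $a\bfe_L$ gives $\langle \qb_{0,2;\perp_0}(\eta,v(\ga)),1\rangle_L$, which becomes $\qb_{-1,2}(\eta\otimes v(\ga))$ precisely via that proposition. The ordinary top-degree and unit properties do not touch the horocyclic moduli space, so by themselves they do not close this step. Second, your sentence ``the $v(\ga)$-piece pairs against the corresponding $v(\ga)$-term in the absolute connection'' should read ``in the absolute \emph{homotopy} $\widetilde{G}_v$''; the Getzler--Gauss--Manin connection itself has no $v(\ga)$ term, it is the homotopy $G_v$ from \cite{Hug} that carries it. With these two corrections your argument lines up exactly with the paper.
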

First recall:
\begin{defi}[{\cite[Definition~4.67]{Hug}}]
	For $v \in Der_\Lambda Q^e$, the pairing $\langle \_, G_v(\_) \rangle: A^*(L;Q^e)[[u]] \otimes CC^-_*(A) \rightarrow Q^e[[u]]$ is given by: \begin{align}
		\langle \eta, G_v(\alpha) \rangle :&= \langle \eta, \widetilde{G}_v(\alpha) \rangle + (-1)^{|v|'|\eta|'+ 1}\langle \eta, \OC^-(J_v(\alpha)) \rangle \notag\\
		&=(-1)^{|v||\eta| + |\al_0|(\epsilon(\widetilde{\al}) +1)} \langle \qb_{k,2,\perp_0}(\widetilde{\al};\eta,v(\ga)), \al_0 \rangle_L + (-1)^{|v|'|\eta|'+ 1}\langle \eta, \OC^-(J_v(\alpha)) \rangle.
	\end{align}
\end{defi}
\begin{nlemma}[{\cite[Proposition~4.68]{Hug}}]
	\label{homotopy pairing connection}
	The pairing $G_v$ satisfies:
	\begin{multline*}
	\langle \nabla^*_{v} \eta, \OC^- (\alpha) \rangle + (-1)^{|\eta||v|}\langle \eta, \OC^- (\nabla_{v} \alpha) \rangle = v \left(\langle \eta, \OC^-(\alpha)\rangle\right) \\+ u^{-1}\left(\langle d\eta, G_{v} (\alpha) \rangle + (-1)^{|\eta| + |v|}\langle \eta, G_{v}\left((b+uB)(\alpha)\right)\rangle \right).
\end{multline*}
\end{nlemma}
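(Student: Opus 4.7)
My plan is to establish this chain-level homotopy identity by a Stokes' theorem argument on the moduli space $\MM_{k+1,l+1;\perp_0}(\beta)$ of $J$-holomorphic disks with a horocyclic constraint, following the pattern of Sheridan's proof (Theorem~\ref{thm: Sheridan homotopy}) that the absolute cyclic open-closed map respects the Getzler--Gauss--Manin connection. The integrand at homology class $\beta$ is built by pulling back $\eta$ at the first horocyclic interior marked point, $v(\ga)$ at the second, the cyclic tail $\widetilde{\al}$ at the boundary marked points, and $\al_0$ along $evb_0$. After Novikov-weighted summation over $\beta$ and exponentiation of $b$ at the unmarked boundary insertions and $\ga$ at the unmarked interior insertions, the $(evb_0)_*$-pushforward reproduces precisely the $\qb_{k,2,\perp_0}$-portion of $\langle \eta, G_v(\al)\rangle$.

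Applying $d$ inside this pushforward and invoking Stokes (using $d\ga=0$) equates $\langle d\eta, G_v(\al)\rangle$ together with the $d\al_i$ contributions with an integral over the codimension-one boundary of $\MM_{k+1,l+1;\perp_0}(\beta)$. This boundary decomposes into two families: (a) strata inherited from $\MM_{k+1,l+1}(\beta)$, namely disk-node bubbling on the boundary and sphere bubbling at interior marked points; and (b) the two endpoints $w_2 \to w_1$ and $w_2 \to z_0$ of the defining arc $I \subset D^2$ in the fibre product $I \times_{D^2} \MM_{k+1,l+1}(\beta)$.

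Family (a) reorganises, via cyclic symmetry (Proposition~\ref{cyclic symmetry}) and the sum over positions of $b$-insertions, into $\langle \eta, G_v(b\al)\rangle$; strata where the zeroth boundary marked point collides with an adjacent boundary marked point are, after cyclic reshuffle, the de~Rham realisation of the cyclic $B$-operator and contribute $\langle \eta, G_v(uB\al)\rangle$, the $u$-weight appearing from the tangent direction to the horocyclic moduli carried by $B$. Family (b) yields the LHS: the endpoint $w_2 \to w_1$ collapses $v(\ga)$ onto $\eta$ at a common interior point, producing a contribution to the quantum product $v(\ga)\star\eta$; combined with the naive derivation $v(\langle \eta, \OC^-(\al)\rangle)$, obtained by differentiating $\ga$- and $b$-exponentials inside the $\qb$-operations through Cartan-type formulas in the spirit of Lemma~\ref{le:v of q -1}, the $v(\ga)$-bulk part assembles into $\langle \nabla^*_v \eta, \OC^-(\al)\rangle$ while the residual $v(b)$ insertions reconstitute $(-1)^{|\eta||v|}\langle \eta, \OC^-(\nabla_v\al)\rangle$. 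The endpoint $w_2 \to z_0$ degenerates into a disk with an additional boundary marked point at $z_0$ carrying $v(b) = \phi_v$, giving precisely the $\OC^-(J_v\al)$-summand of $G_v$.

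The main obstacle is sign bookkeeping. The orientation $\partial I = \{1\} - \{0\}$, the fibre-product orientation on $\MM_{k+1,l+1;\perp_0}(\beta)$, the cyclic reshuffle signs, the convention $\dagger(\al)$ built into our $\q$-operations, and the sign twists in the definitions of $G_v$, $J_v$, $\nabla^*_v$, and $b + uB$ must all conspire to produce the stated identity. The overall $u^{-1}$ prefactor on the RHS is generated by the explicit $u^{-1}$ in the chain-level definition of $\nabla^*_v$, which must match both the $u$-weight carried by $uB$ and the weight assigned to the horocyclic constraint relative to the unconstrained $\OC^-$ pairing. Once all signs are verified, assembling the boundary contributions delivers the identity.
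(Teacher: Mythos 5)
Your overall strategy---Stokes' theorem on the horocyclic moduli spaces $\MM_{k+1,l;\perp_0}(\beta)$, with disk-bubbling strata producing the $b$-terms and the endpoint $w_2\to w_1$ of the arc $I$ producing the $v(\ga)\star\eta$ contribution inside $\nabla^*_v\eta$---is indeed the mechanism underlying this lemma; note, though, that the present paper does not reprove it but imports it from \cite[Proposition~4.69]{Hug}, where exactly this kind of boundary analysis is carried out. However, two structural points in your accounting are wrong, and they are where the real content lies. First, the endpoint $w_2\to z_0$ cannot produce the $\OC^-(J_v(\al))$ summand of $G_v$: the constrained point $w_2$ carries the ambient form $v(\ga)$, and its collision with $z_0$ produces a disk bubble containing $z_0$ together with an \emph{interior} $v(\ga)$-insertion; after applying cyclic symmetry this matches $v(\ga)$-insertion terms adjacent to $\al_0$ coming from the Getzler--Gauss--Manin connection $\nabla_v\al$ (whose $v(\m^{b,\ga})$-insertions have an interior $v(\ga)$ part) and from the Cartan-type expansion of $v(\qb_{k,1})$---in any case it involves $v(\ga)$, never $v(b)$. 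The $J_v=i\{\phi_v\}$ correction with $\phi_v=v(b)$ is there for a different reason: the naive derivative $v(\langle\eta,\OC^-(\al)\rangle)$ produces $v(b)$-insertions directly on the $\OC$-disk (cf.\ Lemma \ref{le:v of q -1}), whereas $\OC^-(\nabla_v\al)$ produces $v(b)$-insertions through $v(\m^{b,\ga})$ sitting on a separate disk attached at a boundary node; reconciling these is an algebraic step using Getzler's Cartan homotopy formula for $i\{\phi_v\}$ against $b+uB$, together with Lemma \ref{lem: Jv on unit} and the unit properties. This step has no moduli-geometric counterpart at the constrained point and is entirely absent from your sketch.

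Second, your mechanism for the $uB$-terms---a ``$u$-weight from the tangent direction to the horocyclic moduli''---is not a real mechanism: $u$ is a formal bookkeeping variable, and the term $\langle\eta,G_v(uB\al)\rangle$ appears simply because the identity is asserted against the negative cyclic differential $b+uB$. Since $B$ inserts the strict unit, these contributions are controlled by the unit properties of the ordinary and horocyclic $\q$-operations (cf.\ Proposition \ref{unit property} and Lemma \ref{unit on the horocycle 2}), with most of them vanishing and the survivors matching specific degenerate configurations; they do not arise as boundary strata where $z_0$ collides with a neighbouring marked point weighted by $u$. Without the Cartan-homotopy/$J_v$ component and with the $B$-terms misattributed, the boundary contributions do not close up to the stated identity, so the gap here is structural rather than a matter of the deferred sign bookkeeping.
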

We extend this to the relative setting:
\begin{defi}
	The homotopy pairing\(  \langle \_, G^L_v(\_)\rangle: C^*(\mathfrak{i})[[u]] \otimes CC_*^{-}(A,L) \rightarrow Q^e[[u]] \) is defined by \[
	\langle (f,\eta),G^L_{v}(a,\alpha) \rangle = \langle \eta,G_{v}(\alpha) \rangle.
	\]
\end{defi}
We will show: \begin{nprop}
	\label{prop: relative oc respects v connection}
	The homotopy pairing satisfies:\begin{multline}
		\langle \nabla^{*,L}_v(f,\eta),\mathcal{OC}^-_L(a,\alpha) \rangle + (-1)^{|\eta||v|}\langle (f,\eta),\mathcal{OC}^-_L(\nabla^{L}_v(a,\alpha)) \rangle = v(\langle (f,\eta), \OC^-_L(a,\alpha) \rangle)\\
		+ u^{-1}(\langle d(f,\eta), G^L_v(a,\alpha) \rangle + (-1)^{|\eta|+|v|} \langle (f,\eta), G^L_v\left((b+uB)(a,\alpha)\right) \rangle)
	\end{multline}
\end{nprop}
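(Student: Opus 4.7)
The plan is to expand both sides of the claimed identity fully, separate out the "absolute part" that involves only $\eta$ and $\alpha$ (which reduces immediately to Lemma \ref{homotopy pairing connection}), and check that the "new" terms involving $f$, $a$, $\mathfrak{i}(\eta)$, $v(c)$ and $v(\qb_{-1,1})$ cancel by direct computation. Unpacking the definitions, $\langle (f,\eta),\OC^-_L(a,\al)\rangle$ is the sum $\langle \eta,\OC^-(\al)\rangle \pm fa$, and both $\nabla^{*,L}_v$ and $\nabla^L_v$ split as a Leibniz term plus a $u^{-1}$-correction (involving $v(c)$ and $v(\qb_{-1,1})$ on the one side, $v(c)$ only on the other). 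The pairing $G^L_v$ is defined to see only the $\al$-component, so $\langle d(f,\eta), G^L_v(a,\al)\rangle$ reduces to $\langle d\eta, G_v(\al)\rangle$ and the $\mathfrak{i}(\eta)$-contribution is automatically zero.

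Once the absolute identity is factored out, the remaining terms are: $\pm v(f)a$ and $\pm fv(a)$ (which cancel against the derivative $v(fa)$); $\pm u^{-1}v(c)fa$ from each of $\nabla^{*,L}_v(f,\eta)$ and $\nabla^L_v(a,\al)$, which cancel against each other; and two "genuinely new" contributions, namely $\pm u^{-1}v(\qb_{-1,1})(\eta)\cdot a$ from the $f$-component of $\nabla^{*,L}_v$, and $\pm u^{-1}(-1)^{|a|}\langle \eta, G_v(ae_L)\rangle$ coming from the $(-1)^{|a|}ae_L$ piece of the cone differential $d^-_F(a,\al)$ fed into $G^L_v$. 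The identification of these last two terms is the conceptual heart of the argument: by Lemma \ref{le:v of q -1} we have
\begin{equation}
v(\qb_{-1,1})(\eta) = (-1)^{|v||\eta|'}\langle \qb_{0,1}(\eta), v(b)\rangle_L + (-1)^{|v||\eta|}\qb_{-1,2}(\eta\otimes v(\ga)),
\end{equation}
while the definition of $G_v$, together with Lemma \ref{lem: Jv on unit} (which gives $J_v\mathbf{e}_L = v(b)$) and the unit computation \eqref{eq:open closed on unit}, expresses $\langle \eta, G_v(ae_L)\rangle$ as the sum of the horocyclic term $\langle \qb_{0,2;\perp_0}(\eta, v(\ga)), ae_L\rangle_L$ and an $\OC^-$-term whose unit evaluation produces $\pm\mathfrak{i}(\qb_{0,1}(\eta)\wedge v(b))$. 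Proposition \ref{unit on the horocycle 2} (bulk-deformed) converts the horocyclic term into $\pm a\,\qb_{-1,2}(\eta\otimes v(\ga))$, matching the second summand above; the $J_v$ term matches the first.

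The main obstacle is sign bookkeeping, which is delicate because of the degree shift in the cone complex, the differing conventions between $\nabla_v$ and $\nabla^*_v$, and the cyclic sign appearing in $\zeta_\perp$. I would therefore package the computation of $\langle \eta, G_v(ae_L)\rangle$ as a short preliminary lemma, proven by plugging the length-zero cyclic chain $\al = ae_L$ into the definition of $G_v$ and then applying the bulk-deformed version of Proposition \ref{unit on the horocycle 2} and equation \eqref{eq:open closed on unit}. Once that lemma is in hand, the proof of Proposition \ref{prop: relative oc respects v connection} becomes a matter of collecting terms according to the five groups above and checking that each sign matches, which is routine.
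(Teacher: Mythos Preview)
Your proposal is correct and follows essentially the same route as the paper: reduce to the absolute identity of Lemma~\ref{homotopy pairing connection}, then show that the residual term reduces to the single equation $(-1)^{|\eta|+|v|+|a|}\langle \eta, G_v(ae_L)\rangle = (-1)^{(n+1)|a|} v(\qb_{-1,1})(\eta)\,a$, which is verified via Lemma~\ref{unit on the horocycle 2}, Lemma~\ref{lem: Jv on unit}, and Lemma~\ref{le:v of q -1}. One small imprecision: equation~\eqref{eq:open closed on unit} concerns $\OC^-(ae_L)$ specifically, whereas the $J_v$-term requires evaluating $\OC^-$ on the length-zero chain $av(b)$; this comes straight from the definition of $\OC^-$ rather than from~\eqref{eq:open closed on unit}, exactly as the paper does.
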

	\begin{proof}
		We write out the terms one by one. First, we have: \begin{equation}
		\langle \nabla^{*,L}_v(f,\eta),\mathcal{OC}^-_L(a,\alpha) \rangle = \langle \nabla^*_v \eta, \OC^-(\al) \rangle + (-1)^{1+(n+1)|a|}\left(v(f) - u^{-1} \left(v(c)f + v(\qb_{-1,1})(\eta)\right)\right)a.
		\end{equation}
		Moreover, recalling the chain level formula of the connection on $HC^-_*(A,L)$ (Lemma \ref{lem: chain level connection relative Fuk}), we have:
		\begin{equation}
		\langle (f,\eta),\mathcal{OC}^-_L(\nabla^L_v(a,\alpha)) \rangle = \langle \eta, \OC^-(\nabla_v \alpha) \rangle + (-1)^{1+ (n+1)(|v|+|a|)} f\left(v(a) - u^{-1}v(c)a\right).
		\end{equation} 
		First observe that for an element $(f,\eta) \in C^*(\mathfrak{i};Q^e)[[u]]$ of homogeneous degree we have $|\eta| = |f| + n + 1$. We thus find:\begin{align}
			v(fa) &= v(f)a + (-1)^{(|\eta| + n+1)|v|}fv(a),\\
			fv(c) &= (-1)^{(|\eta|+ n+1)|v|}v(c)f.
		\end{align}
		Using Lemma \ref{homotopy pairing connection}, we thus have:
	\begin{multline*}
	\langle \nabla^L_v(f,\eta),\OC_L^-(a,\alpha) \rangle + (-1)^{|\eta||v|}\langle (f,\eta),\OC_L^-(\nabla^L_v(a,\alpha)) \rangle = 
	v(\langle (f,\eta), \OC_L^-(a,\alpha) \rangle) + u^{-1}\langle d\eta, G_v(\alpha) \rangle\\ + u^{-1}\left( (-1)^{|\eta|+ |v|} \langle \eta, G_v(b+uB)(\alpha) \rangle +(-1)^{(|a|+|\eta|)|v|}\langle \eta, \OC^-(av(b)) \rangle + (-1)^{(n+1)|a|} v(\qb_{-1,1})(\eta)a\right).
\end{multline*}
		Now observe that: \begin{multline}
		\langle d(f,\eta), G^L_v(a,\alpha) \rangle + (-1)^{|\eta|+|v|} \langle (f,\eta), G^L_v(b+uB)(a,\alpha) \rangle = \\ \langle d\eta, G_v(\alpha) \rangle + (-1)^{|\eta|+|v|} \langle \eta, G_v(b+uB)(\alpha) \rangle + (-1)^{|\eta|+|v| + |a|} \langle \eta, G_v(ae_L) \rangle.
		\end{multline}
		To finish the proof, we thus need to show: \begin{equation}
			\label{eq:extra terms relative OC respects conn}
			(-1)^{|\eta|+|v| + |a|} \langle \eta, G_v(ae_L) \rangle = (-1)^{(n+1)|a|} v(\qb_{-1,1})(\eta)a.
		\end{equation}
		Using Lemma \ref{unit on the horocycle 2}, we rewrite the first term of $G_v$ as:
		\begin{equation}
		\langle \eta, \widetilde{G}_v(ae_L) \rangle = (-1)^{|v||\eta| + |\eta| + |v| + n|a|} \qb_{-1,2}(\eta \otimes v(\gamma)) a.
		\end{equation}
		By Lemma \ref{lem: Jv on unit}, and the definition of $\OC^-$ we find:
		 \begin{equation}
			\langle \eta, \OC^-(J_v(e_L)) \rangle = (-1)^{(|v|+1)|a|' + n|a|} \langle \qb_{0,1}(\eta), v(b) \rangle_L a
		\end{equation}
		Thus, equation \eqref{eq:extra terms relative OC respects conn} follows from Lemma \ref{le:v of q -1}.
	\end{proof}
We have thus shown: \begin{nthm}
	\label{thm: relative cyclic open-closed is a morphism of T-structures}
	The relative cyclic open-closed map $$\OC^-_L: (HC^-_*(A,L),\nabla^L) \rightarrow (QH_*(X,L;Q^e)[[u]],(\nabla^{*,L})^\vee)$$ is a morphism of T-structures.
\end{nthm}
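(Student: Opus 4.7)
The plan is to derive Theorem~\ref{thm: relative cyclic open-closed is a morphism of T-structures} as a direct corollary of the chain-level identity established in Proposition~\ref{prop: relative oc respects v connection}. By definition, $\OC^-_L$ being a morphism of T-structures means that for every derivation $v \in Der_\Lambda Q^e$, the equation $\OC^-_L \circ \nabla^L_v = (\nabla^{*,L}_v)^\vee \circ \OC^-_L$ holds on (co)homology. Expressed through the relative open-closed pairing of Definition~\ref{defi: relative open-closed pairing}, this is equivalent to the identity
\[
\langle \nabla^{*,L}_v(f,\eta),\OC^-_L(a,\alpha)\rangle + (-1)^{|\eta||v|}\langle (f,\eta),\OC^-_L(\nabla^L_v(a,\alpha))\rangle = v\bigl(\langle (f,\eta),\OC^-_L(a,\alpha)\rangle\bigr)
\]
holding for every $d$-cocycle $(f,\eta)$ in the mapping cone $C^*(\mathfrak{i})[[u]]$ and every $d^-_F$-cycle $(a,\alpha)$ in $CC^-_*(A,L)$.

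I would first substitute such representatives into the Proposition's chain-level equation. The two error terms on its right-hand side are
\[
u^{-1}\langle d(f,\eta), G^L_v(a,\alpha)\rangle \quad \text{and} \quad (-1)^{|\eta|+|v|}u^{-1}\langle (f,\eta), G^L_v((b+uB)(a,\alpha))\rangle.
\]
The first vanishes immediately because $(f,\eta)$ is a $d$-cocycle. For the second, I would invoke the definition $\langle (f,\eta), G^L_v(a,\alpha)\rangle = \langle \eta, G_v(\alpha)\rangle$, which ensures that $G^L_v$ only sees the $CC^-_*(A)$-component of its input. Reading off the relative differential from Example~\ref{eg: relative cyclic chains}, the cycle condition $d^-_F(a,\alpha)=0$ forces $(b+uB)(\alpha) = -(-1)^{|a|}a\bfe_L$, so the $\alpha$-component of $(b+uB)(a,\alpha)$ vanishes and the second error term drops out as well.

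With both homotopy terms eliminated on representatives, the Proposition's identity reduces exactly to the compatibility required of a T-structure morphism at the chain level, and passing to (co)homology establishes the theorem. All the substantive work already sits in Proposition~\ref{prop: relative oc respects v connection}; the only obstacle here is the bookkeeping needed to verify that the symbol $(b+uB)(a,\alpha)$ appearing in the Proposition matches the $\alpha$-component of the relative differential $d^-_F(a,\alpha)$ in the way that makes the cycle condition cancel the stray $a\bfe_L$ contribution. Once this identification is confirmed, the theorem follows formally, consistent with the excerpt's phrase "We have thus shown".
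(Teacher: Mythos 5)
Your proposal is correct and matches the paper's route: the paper states this theorem immediately after Proposition \ref{prop: relative oc respects v connection} ("We have thus shown"), i.e.\ it is obtained exactly as you describe, by evaluating the chain-level identity on a $d$-cocycle $(f,\eta)$ and a $d^-_F$-cycle $(a,\alpha)$, where the term $\langle d(f,\eta),G^L_v(a,\alpha)\rangle$ vanishes and $G^L_v$ applied to the relative differential of a cycle vanishes (it only sees the $CC^-_*(A)$-component, which is zero by the cycle condition), so the pairing is covariantly constant and dualising gives the statement on homology.
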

Next we show that the relative cyclic open-closed pairing respects the connection $\nabla^L_{e\partial_e}$. To this end, first recall: 
\begin{nlemma}[{\cite[Proposition~4.17]{Hug}}]
	\label{lem:homotopy pairing e connection}
	The pairing \begin{equation*}
		\langle \_, G_{e\partial_e}(\_) \rangle := \langle \_, G_{Y}(\_) \rangle + (-1)^{|\eta|}\langle \eta, \OC^-(J_{e\partial_e}(\alpha)) \rangle - (-1)^{|\eta|}\langle \eta, \OC^-(J_Y(\alpha)) \rangle
	\end{equation*}
 	satisfies: \begin{multline}
	\langle \nabla^*_{e\partial_e} \eta, \OC^- (\alpha) \rangle + \langle \eta, \OC^- (\nabla_{e\partial_e} \alpha) \rangle = e\partial_e \left(\langle \eta, \OC^-(\alpha)\rangle\right) \\+ u^{-1}\left(\langle d\eta, G_{e\partial_e} (\alpha) \rangle + (-1)^{|\eta|}\langle \eta, G_{e\partial_e}\left((b+uB)(\alpha)\right)\rangle \right).
\end{multline}
\end{nlemma}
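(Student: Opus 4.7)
The plan is to reduce the identity to the already-established Lemma \ref{homotopy pairing connection}, applied to the $\Lambda^e$-linear derivation $\YY \in Der_{\Lambda^e}Q^e_U$ from Assumption \ref{bulk-deformation assumption}, and then to absorb the discrepancy between $e\partial_e$ and $\YY$ into the correction terms appearing in the definition of $G_{e\partial_e}$. The point is that $e\partial_e$ is only $\Lambda$-linear (it acts non-trivially on $e$), so Lemma \ref{homotopy pairing connection} does not apply to it directly. However, Assumption \ref{bulk-deformation assumption} guarantees $[\YY(\ga)] = c_1$, while $e\partial_e$ detects the same class through the grading that records $c_1$-weighted Maslov contributions, so the difference between the two derivations should match a chain-level correction built from $c_1$-insertions and $J$-operators.

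First I would apply Lemma \ref{homotopy pairing connection} with $v = \YY$ to obtain the analogue of the desired identity with $G_\YY$ on the right-hand side. Then, using the decomposition $G_{e\partial_e} = G_\YY + (-1)^{|\eta|}\OC_e^{-} \circ J_{e\partial_e} - (-1)^{|\eta|}\OC_e^{-} \circ J_\YY$, I would expand both sides of the claimed identity. The $G_\YY$ contributions cancel against the $\YY$-version already in hand, reducing the problem to matching the terms involving $J_{e\partial_e}$ and $J_\YY$ against the differences $\nabla^*_{e\partial_e} - \nabla^*_\YY$ on the quantum side and $\nabla_{e\partial_e} - \nabla_\YY$ on the cyclic side.

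To close the gap, I would use Lemma \ref{le:e of q -1} to rewrite $e\partial_e(\qb_{-1,1})(\eta)$ as $\langle \qb_{0,1}(\eta), e\partial_e(b)\rangle_L + \qb_{-1,2}(\eta \otimes c_1)$, and Lemma \ref{le:v of q -1} with $v = \YY$ to rewrite $\YY(\qb_{-1,1})(\eta)$ as $\langle \qb_{0,1}(\eta), \YY(b)\rangle_L + \qb_{-1,2}(\eta \otimes \YY(\ga))$. Since $\YY(\ga)$ and $c_1$ represent the same cohomology class, their difference is exact, and the resulting boundary terms are absorbed by the homotopy structure of $G_\YY$ (which uses the horocyclic moduli spaces). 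The remaining $\langle \qb_{0,1}(\eta), e\partial_e(b) - \YY(b)\rangle_L$ contributions are matched by the $J_{e\partial_e} - J_\YY$ correction in $G_{e\partial_e}$ via Lemma \ref{lem: Jv on unit} and the unit/horocycle identity of Lemma \ref{unit on the horocycle 2}.

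The main obstacle I anticipate is sign bookkeeping: $e\partial_e$ has even degree but is not $\Lambda^e$-linear, so Leibniz signs against $e$-weighted factors need careful tracking, and the cyclic insertions that appear in the definition of $G_v$ interact non-trivially with these signs, especially since $J_v$ changes the length of a Hochschild chain. Once the signs are handled, the proof is a direct combination of Lemma \ref{homotopy pairing connection} applied to $\YY$ with the algebraic identities supplied by Lemmas \ref{le:v of q -1} and \ref{le:e of q -1}.
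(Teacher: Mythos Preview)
The paper does not give a proof of this lemma; it is stated with a citation to \cite[Proposition~4.17]{Hug} and used as input for the relative result (Proposition~\ref{prop: homotopy pairing relative e connection}). So there is no in-paper proof to compare against.

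Your overall strategy---apply Lemma~\ref{homotopy pairing connection} to the $\Lambda^e$-linear derivation $\YY$ and then absorb the discrepancy with $e\partial_e$ into the $J$-correction terms---is the natural one and is almost certainly the approach taken in the cited reference. The point that $e\partial_e$ is only $\Lambda$-linear (it differentiates the $e^{\mu(\beta)/2}$ weights in the $\q$-operations, while $\YY$ does not) is exactly why one cannot invoke Lemma~\ref{homotopy pairing connection} directly.

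There is, however, a mismatch in the ingredients you plan to use. Lemmas~\ref{le:v of q -1} and~\ref{le:e of q -1} concern $\qb_{-1,l}$, the disk counts with no boundary output. These enter the \emph{relative} quantum connection $\nabla^L$ (Section~\ref{section: the relative quantum TE-structure}) and are precisely what is needed for the relative analogue, Proposition~\ref{prop: homotopy pairing relative e connection}. They play no role in the absolute statement here: neither $\nabla^*_{e\partial_e}$ on $QH^*(X)[[u]]$, nor $\nabla_{e\partial_e}$ on $HC^-_*(A)$, nor the homotopy $G_v$ of Definition preceding Lemma~\ref{homotopy pairing connection} involve $\qb_{-1}$. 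What you actually need are the identities (cited in the paper as \cite[Lemmas~4.37--4.38]{Hug}) describing how $v$ and $e\partial_e$ act on the bulk-deformed operations $\qb_{k,l}$ themselves: $e\partial_e$ produces an extra $c_1$-insertion via the divisor axiom, matching $\YY(\ga)$ up to the $e\partial_e(b)$ versus $\YY(b)$ discrepancy, which is exactly the $J_{e\partial_e}-J_\YY$ correction. Once you swap in those lemmas, your outline goes through; as written, the appeal to the $\qb_{-1}$ identities conflates the absolute and relative settings.
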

We then have:
	\begin{nprop}
		\label{prop: homotopy pairing relative e connection}
		The homotopy pairing satisfies:\begin{multline}
			\langle \nabla^{*,L}_{e\partial_e}(f,\eta),\mathcal{OC}^-_L(a,\alpha) \rangle + \langle (f,\eta),\mathcal{OC}^-_L(\nabla^{L}_{e\partial_e}(a,\alpha)) \rangle = {e\partial_e}(\langle (f,\eta), \OC^-_L(a,\alpha) \rangle)\\
			+ u^{-1}(\langle d(f,\eta), G^L_{e\partial_e}(a,\alpha) \rangle + (-1)^{|\eta|} \langle (f,\eta), G^L_{e\partial_e}\left((b+uB)(a,\alpha)\right) \rangle)
		\end{multline}
	\end{nprop}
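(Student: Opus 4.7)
The plan is to mirror the proof of Proposition \ref{prop: relative oc respects v connection} step-by-step, replacing the derivation $v$ with $e\partial_e$ and substituting the $e\partial_e$-analogues of each lemma used there. First I would expand the four terms of the desired identity explicitly. The two terms on the left-hand side come from the chain-level formulas: $\nabla^{*,L}_{e\partial_e}(f,\eta)$ from its definition, and $\nabla^{L}_{e\partial_e}(a,\alpha) = (e\partial_e(a) - u^{-1}e\partial_e(c)a, \nabla_{e\partial_e}\alpha)$ from Lemma \ref{lem: chain level connection relative Fuk}, together with Definition \ref{defi: relative open-closed pairing} for the pairing $\langle \_,\OC^-_L(\_)\rangle$. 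After this expansion, the whole identity splits into an ``$\eta$--$\alpha$ part'' and a ``$f$--$a$ part''.

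For the $\eta$--$\alpha$ part I would apply Lemma \ref{lem:homotopy pairing e connection} verbatim; this reproduces exactly the $\langle d\eta, G_{e\partial_e}(\alpha)\rangle$ and $(-1)^{|\eta|}\langle \eta, G_{e\partial_e}((b+uB)(\alpha))\rangle$ contributions of the right-hand side. Writing out the remaining terms, and using $b(a,\alpha) = (0, ae_L + b(\alpha))$ together with equation \eqref{eq:open closed on unit}, the identity reduces to the single scalar equality
\[
(-1)^{|\eta|+|a|}\langle \eta, G_{e\partial_e}(ae_L)\rangle = (-1)^{(n+1)|a|}\, e\partial_e(\qb_{-1,1})(\eta)\, a,
\]
precisely analogous to equation \eqref{eq:extra terms relative OC respects conn} in the previous proof.

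To establish this key identity I would decompose $G_{e\partial_e}$ using its definition from Lemma \ref{lem:homotopy pairing e connection}. The $G_Y$ contribution can be evaluated as in the proof of Proposition \ref{prop: relative oc respects v connection}, producing a term of the form $\qb_{-1,2}(\eta \otimes Y(\gamma))a$ via Lemma \ref{unit on the horocycle 2}. The two correction terms $\pm\langle \eta, \OC^-(J_{e\partial_e}(ae_L))\rangle$ and $\mp\langle \eta, \OC^-(J_Y(ae_L))\rangle$ evaluate, via Lemma \ref{lem: Jv on unit}, to pairings $\langle \qb_{0,1}(\eta), e\partial_e(b) - Y(b)\rangle_L \, a$. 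Assembling these three pieces, Lemma \ref{le:e of q -1} (which expresses $e\partial_e(\qb_{-1,1})(\eta)$ as $\langle \qb_{0,1}(\eta), e\partial_e(b)\rangle_L + \qb_{-1,2}(\eta\otimes c_1)$) should close the computation.

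The main obstacle I anticipate is twofold: first, careful sign-bookkeeping, as $G_{e\partial_e}$ is a three-term combination rather than a two-term one, and the residual $\nabla^{*,L}_{e\partial_e}$ term $u^{-2}(E(c)f + E(\qb_{-1,1})(\eta))$ no longer has a direct counterpart in the absolute identity. Second, and more subtly, Assumption \ref{bulk-deformation assumption} only provides $[Y(\gamma)] = c_1$ in cohomology, so $Y(\gamma)$ and the representative of $c_1$ used in Lemma \ref{le:e of q -1} differ by an exact form $d\theta$. The resulting discrepancy $\qb_{-1,2}(\eta \otimes d\theta)$ should be absorbed by the $\langle d(f,\eta), G^L_{e\partial_e}(a,\alpha)\rangle$ contribution on the right-hand side after applying Proposition \ref{boundary of q -1 operation}; verifying this cancellation cleanly is the delicate part of the argument.
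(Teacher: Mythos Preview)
Your approach is correct and essentially identical to the paper's: mirror Proposition~\ref{prop: relative oc respects v connection}, reduce to the scalar identity $(-1)^{|\eta|+|a|}\langle \eta, G_{e\partial_e}(ae_L)\rangle = (-1)^{(n+1)|a|}\, e\partial_e(\qb_{-1,1})(\eta)\, a$, decompose $G_{e\partial_e}$ via its definition in Lemma~\ref{lem:homotopy pairing e connection}, and close with Lemma~\ref{le:e of q -1}. The paper's proof is literally two lines after noting the analogy.

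Both of your anticipated obstacles are non-issues. First, there is no $u^{-2}(E(c)f + E(\qb_{-1,1})(\eta))$ term in $\nabla^{*,L}_{e\partial_e}$; you are confusing it with $\nabla^{*,L}_{\partial_u}$. The $e\partial_e$-connection has pole order $u^{-1}$ just like any other $\nabla^{*,L}_v$, so the bookkeeping is exactly parallel to Proposition~\ref{prop: relative oc respects v connection}. Second, the exact-form discrepancy you worry about does not arise: the paper takes $Y(\gamma) = c_1$ at the chain level (this is what the construction in Lemma~\ref{bulk-deformation assumtion holds} produces, with $Y = \partial_t$ and $\gamma' = \gamma + tC_1$), and Lemma~\ref{le:e of q -1} uses the same representative. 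So the computation $\langle \eta, G_Y(ae_L)\rangle - (-1)^{|\eta|}\langle \eta, \OC^-(J_Y(ae_L))\rangle = \langle \eta, \widetilde{G}_Y(ae_L)\rangle = (-1)^{|\eta|+n|a|}\qb_{-1,2}(\eta\otimes c_1)a$ goes through without any $d\theta$ term, and there is nothing to absorb via Proposition~\ref{boundary of q -1 operation}.
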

	\begin{proof}
		The proof is very similar to that of Proposition \ref{prop: relative oc respects v connection}. The only difference is that we now need to show: \begin{equation}
			\label{eq:extra terms relative OC respects connection e direction}
			(-1)^{|\eta| + |a|} \langle \eta, G_{e\partial_e}(ae_L) \rangle = (-1)^{(n+1)|a|} {e\partial_e}(\qb_{-1,1})(\eta)a. 
		\end{equation}
		To this end, first observe that as $Y(\ga) = c_1$:
	\begin{equation}
		\langle \_, G_{Y}(\_) \rangle - (-1)^{|\eta|}\langle \eta, \OC^-(J_Y(\alpha)) \rangle = (-1)^{|\eta| + n|a|} \qb_{-1,2}(\eta \otimes c_1) a.
	\end{equation}
		Equation \ref{eq:extra terms relative OC respects connection e direction} thus follows from Lemma \ref{le:e of q -1}.
	\end{proof}
To finish the proof that the relative cyclic open-closed pairing respects the connection in the $u$-direction, we need to consider the grading. Note that on $Q^e_U$ the grading is given by $ Gr^- = 2u\partial_u + 2E$. 
\begin{nlemma}
	\label{lem: relative OC respects grading}
	The relative cyclic open-closed pairing respects the grading, in the sense that: \begin{equation}
		\langle Gr^-(f,\eta), \OC^-_{L}(a,\al) \rangle + \langle (f,\eta), \OC^-_L(Gr^-(a,\al)) \rangle = Gr^- \langle (f,\eta), \OC^-_L(a,\al) \rangle.
	\end{equation}
\end{nlemma}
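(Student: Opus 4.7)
The approach is to reduce the statement to the absolute analogue already established in \cite{Hug} and to verify a short direct computation for the new ``cone'' term. From Definition \ref{defi: relative open-closed pairing} we have
\[
\langle (f,\eta), \OC^-_L(a,\al) \rangle = \langle \eta, \OC^-(\al) \rangle + (-1)^{1+(n+1)|a|} fa,
\]
so by bilinearity it suffices to check the grading identity separately for each summand. For the first summand $\langle \eta, \OC^-(\al) \rangle$, the required identity
\[
\langle Gr^-(\eta), \OC^-(\al) \rangle + \langle \eta, \OC^-(Gr^-(\al)) \rangle = Gr^-\langle \eta, \OC^-(\al) \rangle
\]
is exactly the statement that the absolute cyclic open-closed pairing respects the Euler grading, which is established in \cite{Hug} as part of the proof that $\OC^-$ is a morphism of Euler-graded TE-structures.

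For the second summand, I would argue by a direct unpacking of the two grading operators. On relative quantum cohomology one has $Gr^-_L(f,0) = ((2u\partial_u + 2E + 1)f, 0)$ by definition, while on relative cyclic homology the cone convention gives $Gr^-_L(a,0) = ((2u\partial_u + 2E - 1)a, 0)$, where the shift comes from the $[1]$ in $CC^-_*(\MK)[1]$. The left-hand side of the lemma, restricted to the $fa$ piece, then reads
\[
(-1)^{1+(n+1)|a|}\Bigl[\bigl((2u\partial_u + 2E + 1)f\bigr)a + f\bigl((2u\partial_u + 2E - 1)a\bigr)\Bigr] = (-1)^{1+(n+1)|a|}(2u\partial_u + 2E)(fa),
\]
using that the $+1$ and $-1$ shifts cancel and that $2u\partial_u + 2E$ acts as a derivation on $Q^e[[u]]$. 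Since $Gr^-_L$ is even, the parity of $a$ is preserved, so the sign prefactor is unaffected, and the right-hand side is precisely $Gr^-\bigl((-1)^{1+(n+1)|a|}fa\bigr)$.

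The computation is not difficult; the only subtlety is the careful matching of shifts. The $+1$ appearing in the definition of $Gr^-_L$ on relative quantum cohomology and the $-1$ from the cone shift on the categorical side are arranged to cancel exactly, which is really the consistency check that the two grading conventions were set up compatibly. Once this cancellation is observed, everything else reduces to the Leibniz rule on $Q^e[[u]]$ and the absolute result from \cite{Hug}, with no further inputs needed.
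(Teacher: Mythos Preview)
Your proof is correct and follows essentially the same approach as the paper: the paper's proof simply says the result follows from the definitions of the Euler gradings together with \cite[Lemma~4.71]{Hug}, which is precisely the absolute identity $\langle Gr^-(\eta), \OC^-(\al) \rangle + \langle \eta, \OC^-(Gr^-(\al)) \rangle = Gr^-\langle \eta, \OC^-(\al) \rangle$ that you invoke. Your version spells out the cone-term computation explicitly (the $+1$/$-1$ cancellation and the Leibniz rule on $Q^e[[u]]$), which the paper leaves to the reader, but the argument is the same.
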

\begin{proof}
	This follows immediately from the definitions of the Euler gradings, and \cite[Lemma~4.70]{Hug}, which implies that: \begin{equation}
		\langle Gr^-(\eta), \OC^-(\al) \rangle + \langle \eta, \OC^-(Gr^-(\al)) \rangle = Gr^- \langle \eta ,\OC^-(\al) \rangle.
	\end{equation}
\end{proof}
As $\nabla^L_{\partial_u} = \frac{Gr^-_L}{2u} - \frac{\nabla^L_E}{u}$, Theorem \ref{thm: relative cyclic open-closed is a morphism of T-structures}, Proposition \ref{prop: homotopy pairing relative e connection} and Lemma \ref{lem: relative OC respects grading} together prove Theorem \ref{thm: relative OC is a morphism of TE-structures}.

\section{Extensions of variations of Hodge structures and open Gromov-Witten invariants}
\label{section: extensios of VHS}
\subsection{Extensions and the relative pairing}
We now consider a $\ZZ$-graded T(P)-structure $\E$ over a $\CC$-algebra $R$. As is shown in \cite{GPS} for example, such data is equivalent to a (polarised) variation of Hodge structures. We recall: 

\begin{defi}[{\cite[Lemma~2.7]{GPS}}]
	\label{defi: VHS}
	A $\CC$-variation of Hodge structures (VHS) consists of: \begin{itemize}
		\item A free, finite-rank $\ZZ/2$-graded $R$-module $\mathcal{V} \cong \mathcal{V}_{ev} \oplus \mathcal{V}_{odd}$.
		\item A flat connection $\nabla$ on each $\mathcal{V}_{\sigma}$.
		\item The decreasing Hodge filtrations \begin{equation}
			\ldots \supset \mathcal{F}^{\geq p}\mathcal{V}_{ev} \supset \mathcal{F}^{\geq p+1}\mathcal{V}_{ev} \supset \ldots
		\end{equation}
	\end{itemize}
	and 
	\begin{equation}
		\ldots \supset \mathcal{F}^{\geq p- \frac{1}{2}}\mathcal{V}_{odd} \supset \mathcal{F}^{\geq p+\frac{1}{2}}\mathcal{V}_{odd} \supset \ldots
	\end{equation}
	which satisfy Griffiths transversality: \begin{equation}
		\nabla_v \mathcal{F}^{\geq p} \subset \mathcal{F}^{\geq p-1}.
	\end{equation}
	A VHS is \emph{polarised} if it is equipped with bilinear pairings \begin{equation}
		\left( \cdot, \cdot \right): \mathcal{V}_{\sigma} \otimes \mathcal{V}_{\sigma} \rightarrow R
	\end{equation}
	for both $\sigma \in \ZZ/2$, such that \begin{equation}
		\left( \alpha, \beta \right) = (-1)^{n} \left( \beta, \al \right),
	\end{equation}
	for some $n \in \ZZ/2$, and such that \begin{equation}
		\left( \mathcal{F}^{\geq p} \mathcal{V}_{\sigma}, \mathcal{F}^{\geq q}\mathcal{V}_{\sigma} \right) = 0 \text{ if } p+q > 0,
	\end{equation}
	and such that the induced pairing \begin{equation}
		\left( \cdot, \cdot  \right): Gr^p_\mathcal{F} \mathcal{V}_{\sigma} \otimes Gr^{-p}_{\mathcal{F}}\mathcal{V}_{\sigma} \rightarrow R
	\end{equation}
	is non-degenerate, for all $p$.
\end{defi}
Suppose that we are given a short exact sequence of $\ZZ$-graded T-structures: \begin{equation}
	0 \rightarrow \E_1 \rightarrow \E_2 \rightarrow \E_3 \rightarrow 0.
\end{equation}
From the construction in \cite{GPS} it is clear that this gives rise to an exact sequence of VHS: \begin{equation}
	0 \rightarrow \mathcal{V}_1 \rightarrow \V_2 \rightarrow \V_3 \rightarrow 0.
\end{equation}
Now, suppose that $\V_1$ is a polarised VHS, and that $\V_3$ satisfies: $\F^{\geq k} \V_3 = \V_3$ for some $k \in \frac{1}{2}\ZZ$, we can then define:
\begin{defi}
	\label{defi: relative polarisation}
	The relative polarisation \begin{equation}
		\langle \cdot, \cdot \rangle: \F^{\geq -k+1}\V_1 \otimes \V_2 \rightarrow R
	\end{equation}
	is given by $\langle \cdot, \cdot \rangle := (\cdot, s(\cdot))_{\V_1}$ for some choice of splitting $s: \V_2 \rightarrow \V_1$ which respects the Hodge filtration (but not necessarily the connection). 
	\end{defi}
\begin{nlemma}
	The relative polarisation is independent of the choice of splitting $s$.
\end{nlemma}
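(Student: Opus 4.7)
The plan is to reduce the statement to the defining orthogonality property of the polarisation on $\V_1$ by analysing the difference of two admissible splittings.

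First I would fix two splittings $s_1, s_2: \V_2 \to \V_1$, each a retraction of the inclusion $i: \V_1 \hookrightarrow \V_2$ and each respecting the Hodge filtration. Setting $\phi := s_1 - s_2$, the identity $(s_j \circ i) = \mathrm{id}_{\V_1}$ forces $\phi \circ i = 0$, so $\phi$ vanishes on the image of $\V_1$ in $\V_2$. Consequently $\phi$ factors through the projection $p: \V_2 \to \V_3$ as $\phi = \psi \circ p$ for a unique $R$-linear map $\psi: \V_3 \to \V_1$. Because both $s_j$ respect the Hodge filtration (and $p$ is strict with respect to the induced filtration on $\V_3$), $\psi$ also respects the Hodge filtration, i.e.\ $\psi(\F^{\geq p}\V_3) \subset \F^{\geq p}\V_1$ for every $p$.

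Next I would invoke the hypothesis $\F^{\geq k}\V_3 = \V_3$. This gives $\psi(\V_3) \subset \F^{\geq k}\V_1$, and hence $\phi(\V_2) \subset \F^{\geq k}\V_1$. For any $\alpha \in \F^{\geq -k+1}\V_1$ and $\beta \in \V_2$, the difference between the two candidate pairings is
\begin{equation}
(\alpha, s_1(\beta))_{\V_1} - (\alpha, s_2(\beta))_{\V_1} = (\alpha, \phi(\beta))_{\V_1}.
\end{equation}
Since $\alpha \in \F^{\geq -k+1}\V_1$ and $\phi(\beta) \in \F^{\geq k}\V_1$, and since $(-k+1) + k = 1 > 0$, the orthogonality condition $(\F^{\geq p}\V_1, \F^{\geq q}\V_1)_{\V_1} = 0$ for $p+q > 0$ from Definition \ref{defi: VHS} forces this difference to vanish.

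The main (and only) content is step three, where one needs the filtration-preservation of $\psi$ combined with the hypothesis $\F^{\geq k}\V_3 = \V_3$ to push $\phi(\beta)$ into a high enough piece of the Hodge filtration that the polarisation annihilates the pair. Everything else is a straightforward diagram chase, and nothing uses the connections, which is consistent with the fact that the splitting $s$ is only required to respect the Hodge filtration and not $\nabla$.
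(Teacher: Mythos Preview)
Your proof is correct and follows essentially the same argument as the paper: factor the difference of two splittings through $\V_3$, use $\F^{\geq k}\V_3 = \V_3$ to land in $\F^{\geq k}\V_1$, and then apply the orthogonality condition $(\F^{\geq -k+1}\V_1,\F^{\geq k}\V_1)_{\V_1}=0$. The only difference is that you spell out the diagram chase in slightly more detail than the paper does.
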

\begin{proof}
	Given two splittings $s_1, s_2$, the difference $s_1 - s_2$ factors as: \begin{equation}
		s_1 - s_2: \V_2 \rightarrow \V_3 \xrightarrow{f} \V_1,
	\end{equation}
	for some map $f: \V_3 \rightarrow \V_1$ respecting the Hodge filtration. But then, as $\V_3 = \F^{\geq k} \V_3$, we find that $Im(s_1 - s_2) \subset \F^{\geq k} \V_1$. Then, $(\cdot, (s_1 - s_2)(\cdot))_{\V_1}$ factors through the pairing $(\cdot, \cdot)_{\V_1}: \F^{\geq -k+1}\V_1 \otimes \F^{\geq k}\V_1$, which vanishes.
\end{proof}
\subsection{A classification result}
\label{section: A classification result}
Now let $\V$ be a polarised VHS over a formal punctured disk $R = \CC((q))$. Let $\widetilde{R} = \CC[[q]]$. Let \begin{equation}
	\label{eq: extension of VHS}
	0 \rightarrow \V \xrightarrow{a} \W \xrightarrow{b} R \rightarrow 0
\end{equation}
be an extension of VHS, where $R$ is equipped with the trivial connection $\nabla_{\frac{d}{dq}} = \frac{d}{dq}$, and filtration given by $\F^{\geq k}R = R$ and $\F^{\geq k+1}R = 0$ for some fixed $k \in \frac{1}{2}\ZZ$.
\begin{defi}
	A VHS $\V$ is said to be \emph{regular singular} if there exists a $\WR$-submodule $\WV \subset \V$ with $\nabla_{q\frac{d}{dq}}\WV \subset \WV$. We say an extension of VHS as above is \emph{regular singular} if both $\V$ and $\W$ are regular singular.
\end{defi}
Regular singular VHS have a canonical $\widetilde{R}$-submodule $\widetilde{\V} \subset \V$ called the Deligne lattice (see \cite[Section~II.2.e]{Sab} for example). This is defined by the conditions that $\nabla_{q\partial_q} \widetilde{\V} \subset \widetilde{\V}$, and that the residue of the connection, $N: \WV_0 \rightarrow \WV_0$ has eigenvalues with real part in $[0,1)$. Here $\widetilde{V}_0 := \WV/q\WV$ is the fibre over $0$. Use similar notation for $\W$, with residue $M: \WW_0 \rightarrow \WW_0$. As the Deligne lattice is functorial and exact (see \cite[Proposition~5.4]{Del}), a regular singular extension of VHS gives rise to an exact sequence of $\widetilde{R}$-modules: \begin{equation}
	0 \rightarrow \WV \rightarrow \WW \rightarrow \widetilde{R} \rightarrow 0
\end{equation}
Define a filtration on the Deligne lattice by setting $\widetilde{\F}^{\geq k}\WV := \F^{\geq k}\V \cap \WV$. The following is then clear:
\begin{nlemma}
	\label{lem: right exact extension}
	Let $0 \rightarrow \V \xrightarrow{a} \W \xrightarrow{b} R \rightarrow 0$ be a regular singular extension of VHS. Then there exists a lift $f \in \widetilde{\F}^{\geq k}\WW$ with $b(f) = 1$.
\end{nlemma}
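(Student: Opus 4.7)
By the exactness and functoriality of the Deligne lattice on regular singular connections, the given extension of VHS gives rise to a short exact sequence
\[
0 \to \WV \to \WW \to \WR \to 0
\]
of free $\WR$-modules, so that the surjection $b: \WW \twoheadrightarrow \WR$ produces many lifts of $1$. The task is to promote $b$ to a surjection at the level of the filtered pieces $\widetilde{\F}^{\geq k}\WW = \F^{\geq k}\W \cap \WW$. My strategy is to reduce to the central fiber at $q=0$, lift the element $1$ into the filtered piece there, and then extend and rescale.

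\textbf{Reduction to the central fiber.} Reducing modulo $q$ produces an exact sequence $0 \to \WV_0 \to \WW_0 \to \CC \to 0$ of finite-dimensional $\CC$-vector spaces, where each fiber inherits a Hodge filtration $\widetilde{\F}^{\geq k}\WW_0 := (\widetilde{\F}^{\geq k}\WW + q\WW)/q\WW$, and similarly for $\WV$. Since $\widetilde{\F}^{\geq k}\WR = \WR$, the induced filtration on $\CC = \WR_0$ is $\widetilde{\F}^{\geq k}\CC = \CC$. Strictness of morphisms of (limit mixed) Hodge structures applied to this fiber sequence then yields some $\bar{f}_0 \in \widetilde{\F}^{\geq k}\WW_0$ with $b(\bar{f}_0) = 1$. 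Because the quotient map $\widetilde{\F}^{\geq k}\WW \twoheadrightarrow \widetilde{\F}^{\geq k}\WW_0$ is surjective by construction, we may lift $\bar{f}_0$ to some $f \in \widetilde{\F}^{\geq k}\WW$. By construction $b(f) \equiv 1 \pmod q$, hence $b(f) \in 1 + q\WR \subset \WR^\times$, and since $\widetilde{\F}^{\geq k}\WW$ is an $\WR$-submodule of $\WW$ the rescaled element $\tilde{f} := f / b(f) \in \widetilde{\F}^{\geq k}\WW$ satisfies $b(\tilde{f}) = 1$.

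\textbf{Main obstacle.} The key subtlety is the strictness statement on the central fiber, which requires identifying the quotient filtrations on $\WV_0, \WW_0, \CC$ as arising from a morphism of (limit mixed) Hodge structures, at which point strictness is classical. In the analytic setting this is supplied by Schmid's nilpotent orbit theorem; in the present formal algebraic setting over $\CC((q))$, the analogue can be verified directly by checking compatibility between the Hodge filtration and the $q$-adic filtration on the Deligne lattice, so that the induced filtrations on the central fiber satisfy the defining properties of a MHS. Once this fiber strictness is in place, the lifting-and-rescaling argument above produces the desired $f$.
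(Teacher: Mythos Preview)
The paper gives no proof here; it simply asserts the lemma ``is then clear.'' Your argument is therefore more detailed than anything in the paper, and the lift-and-rescale step is correct: once one has $\bar f_0 \in \widetilde{\F}^{\geq k}\WW_0$ mapping to $1$, any lift $f \in \widetilde{\F}^{\geq k}\WW$ has $b(f) \in 1 + q\WR \subset \WR^\times$, and since $\widetilde{\F}^{\geq k}\WW = \F^{\geq k}\W \cap \WW$ is an $\WR$-submodule, $f/b(f)$ does the job.

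The substantive content, as you correctly flag, is the strictness claim at the central fibre, and here your proposal has a genuine soft spot. In the analytic setting this is indeed Schmid's nilpotent orbit theorem, but over the formal disc $\CC((q))$ there is no off-the-shelf result to invoke, and your one-sentence sketch of a direct verification is too vague to count as a proof. In fact, as an abstract statement about regular singular VHS in the bare sense of Definition~\ref{defi: VHS}, the lemma is not obvious: one can cook up filtrations on a rank-two $\W$ with trivial connection for which $b(\widetilde{\F}^{\geq k}\WW) = q^N\WR$ with $N > 0$. What makes the claim ``clear'' in the paper's intended application is that the quantum VHS has both its Deligne lattice and its Hodge filtration expressed in terms of the fixed topological basis $H^*(X;\CC)$, so that $\widetilde{\F}^{\geq k}\WW$ is visibly a direct summand of $\WW$ over $\WR$ and the surjectivity onto $\WR$ is immediate. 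In the abstract setting one needs either such a regularity hypothesis on the Hodge filtration (that it extends as a subbundle of the Deligne lattice) or an appeal to limit mixed Hodge theory as you propose; your instinct to supply an actual argument is the right one.
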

By restricting to $q=0$, and taking the residue of the connection matrices, we obtain an exact sequence of $\CC$ vector spaces, equipped with endomorphisms: 
\begin{equation}
	\label{eq:residue exact sequence}
	\begin{tikzcd}
		0 \arrow[r] &\WV_0 \arrow[loop below, "N"] \arrow[r] &\WW_0 \arrow[loop below, "M"] \arrow[r] &\CC \arrow[loop below, "0"] \arrow[r] &0.
	\end{tikzcd}
\end{equation}
\begin{nlemma}
	\label{lem: holomorphic extension and flat section}
	There exists an element $h \in \ker(\nabla^{\W}_{q\frac{d}{dq}}) \subset \WW$ such that $b(h) = 1 \in \widetilde{R}$, if and only if the residue exact sequence \eqref{eq:residue exact sequence} splits at the level of $\CC$ vector spaces with endomorphisms.
\end{nlemma}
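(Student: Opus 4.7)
The plan is to treat the two directions separately, with the forward direction being essentially formal and the backward direction requiring the defining property of the Deligne lattice.

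For the forward direction, suppose $h \in \WW$ satisfies $\nabla^{\W}_{q d/dq} h = 0$ and $b(h) = 1$. Reducing modulo $q$ gives $\bar h \in \WW_0$ with $M \bar h = 0$ and $b(\bar h) = 1$. The $\CC$-linear map $\CC \to \WW_0$ sending $1 \mapsto \bar h$ is then a splitting of \eqref{eq:residue exact sequence} which intertwines the zero endomorphism on $\CC$ with $M$, as required.

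For the converse, I would first produce a good lift. Lemma \ref{lem: right exact extension} yields some $f \in \WW$ with $b(f) = 1$. Given a splitting $s : \CC \to \WW_0$ of the residue sequence with $M s(1) = 0$, the element $\bar f - s(1) \in \WW_0$ lies in $\ker(b \bmod q) = \WV_0$; lifting it to $v_0 \in \WV$ and replacing $f$ by $h_0 := f - v_0$, we obtain $h_0 \in \WW$ with $b(h_0) = 1$ and $\bar{h_0} = s(1)$. Applying $\nabla^{\W}_{q d/dq}$ to $h_0$ and using that $b$ intertwines connections with $\nabla^R_{q d/dq}(1) = 0$, we see $\nabla^{\W}_{q d/dq} h_0 \in \WV$; taking the residue and using $M s(1) = 0$ shows in fact $\nabla^{\W}_{q d/dq} h_0 \in q\WV$.

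It now suffices to solve $\nabla^{\V}_{q d/dq} v = \nabla^{\W}_{q d/dq} h_0$ with $v \in q\WV$: then $h := h_0 - v$ will satisfy $\nabla^{\W}_{q d/dq} h = 0$ and $b(h) = 1$. The key observation is that $\nabla^{\V}_{q d/dq}$ preserves each $q^k \WV$, and the induced endomorphism of $q^k \WV / q^{k+1} \WV \cong \WV_0$ is $k \cdot \mathrm{id} + N$. By the defining property of the Deligne lattice, the eigenvalues of $N$ have real part in $[0,1)$, so $k \cdot \mathrm{id} + N$ is invertible for every $k \geq 1$. A successive approximation argument on the $q$-adic filtration (which converges since $\WV$ is $q$-adically complete as a free $\WR$-module) then shows that $\nabla^{\V}_{q d/dq}$ is a bijection on $q\WV$, producing the required $v$.

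The main technical point is thus the inversion of $\nabla^{\V}_{q d/dq}$ on $q\WV$, which is precisely where the eigenvalue condition in the definition of the Deligne lattice is used; everything else is bookkeeping with the exact sequence of $\WR$-modules coming from functoriality of the Deligne construction.
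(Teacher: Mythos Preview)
Your proof is correct. The forward direction matches the paper's ``the converse is clear,'' and your backward direction is a valid, self-contained construction.

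The paper takes a shorter route for the backward direction: having chosen $C \in \WW_0$ with $M(C)=0$ mapping to $1$, it writes $M$ in block form $\begin{pmatrix} N & 0 \\ 0 & 0 \end{pmatrix}$ and then invokes \cite[Proposition~2.11]{Sab}, which says that on the Deligne lattice one can choose a basis in which $\nabla_{q\partial_q} = q\partial_q + M$ with $M$ constant. The basis vector corresponding to $C$ is then the desired $h$. Your successive-approximation argument---adjusting an initial lift so that $\nabla_{q\partial_q} h_0 \in q\WV$, and then inverting $\nabla^{\V}_{q\partial_q}$ on $q\WV$ using that $k\cdot\mathrm{id}+N$ is invertible for $k\geq 1$---is essentially a direct proof of the portion of that normal-form result needed here. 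Both approaches hinge on the same point: the eigenvalue condition in the definition of the Deligne lattice is exactly what makes the order-by-order solution go through. Your version is more elementary and self-contained; the paper's is shorter but relies on an external reference.
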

\begin{proof}
	Assume first that \eqref{eq:residue exact sequence} splits. This means there exists an element $C \in \WW_0$, which maps to $1 \in \CC$ such that $M(C) = 0$. In block diagonal form, we can thus write the matrix for $M$ as: \begin{equation}
		\begin{pmatrix}
			N & 0\\
			0 & 0\\
		\end{pmatrix}.
	\end{equation}
	By \cite[Proposition~2.11]{Sab}, there exists a basis for $\WW$ such that $\nabla_{q\frac{d}{dq}} = q \frac{d}{dq} + M$, which shows the required element $h \in \W$ exists. The converse is clear.
\end{proof}
\begin{defi}
	\label{defi: holomorphic extenstion}
	We say a regular singular extension of VHS \begin{equation}
		0 \rightarrow \V \rightarrow \W \rightarrow R \rightarrow 0
	\end{equation}
	is \emph{holomorphically flat} if either condition in Lemma \ref{lem: holomorphic extension and flat section} holds.
\end{defi}
We now classify holomorphically flat, regular singular extensions of VHS. This result is an extension of the results in \cite{Carl}, but without taking into account the $\ZZ$-local system. See also \cite{Zuc}, who defined a generalised intermediate Jacobian for regular singular VHS.
\begin{defi}
	The generalised intermediate Jacobian is given by: \begin{equation}
		\widetilde{\mathcal{J}}^{k} = \frac{\WV}{\widetilde{\F}^{\geq k}\WV \oplus \ker(\nabla_{\frac{d}{dq}|_{\WV}})}.
	\end{equation}
\end{defi}
\begin{defi}
	A \emph{normal function} is an element $\nu \in \widetilde{\mathcal{J}}^{k}$ such that, for one (or equivalently, for all) lifts $\widetilde{\nu} \in \WV$, we have $\nabla_{q\frac{d}{dq}}\widetilde{\nu} \in \widetilde{\F}^{\geq k-1}\WV$.
\end{defi}
\begin{nlemma}
	A holomorphically flat, regular singular extensions of VHS canonically determines a normal function.
\end{nlemma}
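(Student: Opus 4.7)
The plan is to construct the normal function as the difference of two distinguished lifts of $1 \in \widetilde{R}$ to $\WW$: one respecting the Hodge filtration, and one that is flat. First, use Lemma \ref{lem: right exact extension} to pick $f \in \widetilde{\F}^{\geq k}\WW$ with $b(f) = 1$. Next, use the holomorphic flatness hypothesis together with Lemma \ref{lem: holomorphic extension and flat section} to pick $h \in \ker(\nabla^{\W}_{q\frac{d}{dq}}) \subset \WW$ with $b(h) = 1$. Since the Deligne lattice is exact (as cited before Lemma \ref{lem: right exact extension}), the sequence $0 \to \WV \to \WW \to \WR \to 0$ is exact, so $b(f-h) = 0$ places $\widetilde{\nu} := f - h$ in $\WV$. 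I then propose to define the candidate normal function as $\nu := [\widetilde{\nu}] \in \widetilde{\mathcal{J}}^{k}$.

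The second step is to verify the normal function condition $\nabla_{q\frac{d}{dq}}\widetilde{\nu} \in \widetilde{\F}^{\geq k-1}\WV$. Since $\nabla^{\W}_{q\frac{d}{dq}} h = 0$ by construction, this reduces to $\nabla_{q\frac{d}{dq}} f \in \widetilde{\F}^{\geq k-1}\WV$. Griffiths transversality applied to $\W$ gives $\nabla_{q\frac{d}{dq}} f \in \F^{\geq k-1}\W$, while differentiating $b(f) = 1$ yields $b(\nabla_{q\frac{d}{dq}} f) = q\tfrac{d}{dq}(1) = 0$, so $\nabla_{q\frac{d}{dq}} f \in \WV$ by the same Deligne lattice exactness. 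Combining these with the compatibility $\F^{\geq p}\V = \F^{\geq p}\W \cap \V$ (strictness of morphisms of VHS) yields the required containment $\nabla_{q\frac{d}{dq}} f \in \F^{\geq k-1}\V \cap \WV = \widetilde{\F}^{\geq k-1}\WV$.

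The third step is independence of choices, which shows the construction is canonical. Two lifts $f, f' \in \widetilde{\F}^{\geq k}\WW$ with $b(f) = b(f') = 1$ satisfy $f - f' \in \widetilde{\F}^{\geq k}\WW \cap \WV = \widetilde{\F}^{\geq k}\WV$. Two flat lifts $h, h'$ satisfy $h - h' \in \ker(\nabla^{\W}_{q\frac{d}{dq}}) \cap \WV$; using that $q$ is not a zero divisor on $\WV$, this coincides with $\ker(\nabla_{\frac{d}{dq}}|_{\WV})$. Consequently, $[\widetilde{\nu}]$ is well-defined in the quotient $\widetilde{\mathcal{J}}^{k} = \WV / (\widetilde{\F}^{\geq k}\WV \oplus \ker(\nabla_{\frac{d}{dq}}|_{\WV}))$.

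The main obstacle I expect is not conceptual but technical: confirming the filtration strictness $\F^{\geq p}\V = \F^{\geq p}\W \cap \V$ in our setting (and the analogous compatibility $\widetilde{\F}^{\geq p}\WV = \F^{\geq p}\W \cap \WV$), which is needed to pass from Griffiths transversality on $\W$ to the required containment inside $\widetilde{\F}^{\geq k-1}\WV$. This should follow from the fact that the extension arises from a short exact sequence of T-structures in the sense of Definition \ref{formal TEP structures defi}, applied to Ganatra--Perutz--Sheridan's correspondence with VHS, but some care is required to check strictness is preserved under the Deligne lattice construction.
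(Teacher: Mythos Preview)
Your proposal is correct and follows essentially the same approach as the paper: define $\widetilde{\nu} = f - h$ from the two lifts, use flatness of $h$ to reduce the normal function condition to Griffiths transversality for $f$, and check well-definedness via the ambiguities in $f$ and $h$. The paper's proof is in fact terser than yours and simply asserts that $\nabla_{q\frac{d}{dq}} f \in \widetilde{\F}^{\geq k-1}\WW$ maps to zero under $b$ and hence lies in $\widetilde{\F}^{\geq k-1}\WV$, without explicitly invoking the strictness you flag; your caution on this point is appropriate but does not represent a different route.
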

\begin{proof}
	Consider the elements $f \in \widetilde{\F}^{\geq k}\WW$ and $h \in \ker(\nabla^{\W}_{q\frac{d}{dq}}) \subset \WW$ from Lemmas \ref{lem: right exact extension} and \ref{lem: holomorphic extension and flat section}. The difference $\widetilde{\nu} = f - h$ then satisfies $b(\widetilde{\nu}) = 0$, thus, $\widetilde{\nu} \in \WV$ is defined. Now observe that $f$ is well-defined up to an element in $a\left(\widetilde{\F}^{\geq k}\WV\right)$, and $h$ is well-defined up to an element in $a\left( \ker(\nabla_{\frac{d}{dq}|_{\WV}}) \right)$.
	By construction, $\nabla_{q\frac{d}{dq}} \widetilde{\nu} = \nabla_{q\frac{d}{dq}} f \in \widetilde{\F}^{\geq k-1}\WW$ maps to zero under $b$, and thus comes from an element in $\widetilde{\F}^{\geq k}\WV$.
\end{proof}

\begin{nlemma}
	A normal function $\nu \in \widetilde{\mathcal{J}}^{k}$ canonically determines a holomorphically flat, regular singular extensions of VHS.
\end{nlemma}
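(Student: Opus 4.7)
The plan is to construct the extension explicitly from a choice of lift and then verify canonicity. Pick a lift $\widetilde{\nu} \in \WV$ of $\nu$. Set $\WW := \WV \oplus \WR \cdot e_0$ as an $\WR$-module with inclusion $a(v) := (v,0)$ and projection $b(v,re_0) := r$, and take $\W := \WW \otimes_{\WR} R$. Extend the connection $\nabla^\V$ to $\WW$ by declaring $e_0$ to be flat, $\nabla^{\W}_{d/dq} e_0 := 0$; flatness of $\nabla^\W$ is automatic over a one-dimensional base. For the Hodge filtration, set $\widetilde{\F}^{\geq p}\WW := \widetilde{\F}^{\geq p}\WV$ for $p > k$ and $\widetilde{\F}^{\geq p}\WW := \widetilde{\F}^{\geq p}\WV + \WR\cdot(\widetilde{\nu} + e_0)$ for $p \leq k$, with $\F^{\geq p}\W$ obtained by tensoring with $R$.

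The heart of the proof is Griffiths transversality, and the only non-trivial case is the new generator at $p = k$: one computes $\nabla^{\W}_{d/dq}(\widetilde{\nu} + e_0) = \nabla^{\V}_{d/dq}\widetilde{\nu}$, and the normal function condition $\nabla_{q\frac{d}{dq}}\widetilde{\nu} \in \widetilde{\F}^{\geq k-1}\WV$ immediately gives $\nabla^{\V}_{d/dq}\widetilde{\nu} \in q^{-1}\widetilde{\F}^{\geq k-1}\WV \subset \F^{\geq k-1}\W$, as required. Regular singularity is automatic: $\nabla_{q\frac{d}{dq}}\WW \subset \WW$ since $\nabla_{q\frac{d}{dq}}\WV \subset \WV$ and $\nabla_{q\frac{d}{dq}} e_0 = 0$, and the residue of $\nabla_{q\frac{d}{dq}}$ on $\WW/q\WW$ is block-triangular with entries the residue $N$ of $\nabla^\V$ and $0$, so its eigenvalues still lie in $[0,1)$ and $\WW$ is indeed the Deligne lattice of $\W$. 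Holomorphic flatness is witnessed by $e_0 \in \ker(\nabla^\W_{d/dq})$ with $b(e_0) = 1$, invoking Lemma~\ref{lem: holomorphic extension and flat section}.

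For canonicity, suppose $\widetilde{\nu}_1, \widetilde{\nu}_2 \in \WV$ are two lifts of $\nu$. Write $\widetilde{\nu}_2 - \widetilde{\nu}_1 = \widetilde{\alpha} + \widetilde{\beta}$ with $\widetilde{\alpha} \in \widetilde{\F}^{\geq k}\WV$ and $\widetilde{\beta} \in \ker(\nabla_{\frac{d}{dq}}|_{\WV})$. I would define $\phi: \WW_{\widetilde{\nu}_1} \to \WW_{\widetilde{\nu}_2}$ to be the identity on $\WV$ and send $e_0 \mapsto e_0 + \widetilde{\beta}$. Compatibility with $a$ and $b$ is immediate; flatness of $\widetilde{\beta}$ ensures $\phi$ intertwines the connections; and the generator of $\widetilde{\F}^{\geq k}\WW_{\widetilde{\nu}_1}$ satisfies $\phi(\widetilde{\nu}_1 + e_0) = (\widetilde{\nu}_2 + e_0) - \widetilde{\alpha}$, which lies in $\widetilde{\F}^{\geq k}\WW_{\widetilde{\nu}_2}$ since $\widetilde{\alpha} \in \widetilde{\F}^{\geq k}\WV$. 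Thus $\phi$ is an isomorphism of extensions of VHS.

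The main subtle point is the interaction between the lattice $\WW$ and its generic fibre $\W$: one must verify that the $q^{-1}$ appearing in $\nabla^{\V}_{d/dq}\widetilde{\nu}$ does not obstruct Griffiths on $\W$ (it does not, since $\F^{\geq k-1}\W$ contains $q^{-1}\widetilde{\F}^{\geq k-1}\WV$), and that $\WW$ is genuinely the Deligne lattice of $\W$ rather than merely \emph{a} lattice, so as to justify assembling the extension from lattice-level data. Neither presents a serious obstacle once tracked carefully, and together with the block-triangular residue computation they complete the proof.
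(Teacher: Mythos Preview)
Your proof is correct and follows essentially the same construction as the paper: build $\W$ as $\V \oplus R$ with the split connection, put $(\widetilde{\nu},1)$ into filtration level $k$, and invoke the normal-function condition for Griffiths transversality. You have simply filled in more detail than the paper does---in particular the explicit verification that $\WW$ is the Deligne lattice via the block-triangular residue, and the explicit isomorphism $\phi$ handling the two components $\widetilde{\alpha}$ and $\widetilde{\beta}$ of the ambiguity in the lift, which the paper leaves as ``an easy exercise.''
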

\begin{proof}
	This is very similar to \cite[Chapter~7, exercises~1 and 2]{CV2}. Choose any lift $\widetilde{\nu} \in \WV$ of $\nu$. Then define an extension by setting \begin{equation}
		\W = \V \oplus R.
	\end{equation}
	The connection is given by $\nabla^{\V} \oplus \frac{d}{dq}$ and filtration given by $\F^{\geq i}\W = span_{R} \left( (\F^{\geq i}\V,0), (\widetilde{\nu}, 1)\cdot \F^{\geq i}R \right)$, where the filtration on $R$ is defined by $\F^{\geq k}R = R$ and $\F^{\geq k+1}R = 0$. By definition of a normal function, the filtration on $\W$ satisfies Griffiths transversality.
	
	This extension is seen to be regular singular. The Deligne lattice is given by $\WW = \WV \oplus \widetilde{R}$, the element $f$ is given by $(\widetilde{\nu},1)$. It is also holomorphically flat by construction, with the element $h$ given by $(0,1)$. 
	
	The proof is then completed by showing that a different choice of lift of the normal function $\nu$ gives rise to an isomorphic extension of VHS. This is an easy exercise. 
\end{proof}
We have thus shown:
\begin{nprop}
	There exists a bijection \begin{equation}
		\lbrace \text{normal functions } \nu \in \widetilde{\mathcal{J}}^{k} \rbrace \leftrightarrow \lbrace \text{holomorphically flat, regular singular extensions of $R$ by $\V$} \rbrace 
	\end{equation}
\end{nprop}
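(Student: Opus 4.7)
The plan is to verify that the two constructions in the preceding lemmas are mutually inverse, so that together they exhibit the claimed bijection. Both directions reduce to comparing the canonical choices made on either side.

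First I would start with a normal function $\nu \in \widetilde{\mathcal{J}}^{k}$, choose a lift $\widetilde{\nu} \in \WV$, and build the extension $\W = \V \oplus R$ as in the construction of the preceding lemma. Applying the extraction procedure to this $\W$, the natural lifts are $f = (\widetilde{\nu}, 1)$, which lies in $\widetilde{\F}^{\geq k}\WW$ by the definition of the filtration, and $h = (0, 1)$, which is flat for $\nabla_{q\frac{d}{dq}}$ since the connection on the $R$-summand is trivial. Their difference $f - h = (\widetilde{\nu}, 0) = a(\widetilde{\nu})$ represents the original $\nu$ in $\widetilde{\mathcal{J}}^{k}$, so this composite is the identity.

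Conversely, start with a holomorphically flat, regular singular extension $0 \to \V \xrightarrow{a} \W \xrightarrow{b} R \to 0$, extract the normal function $\nu$ with lift $\widetilde{\nu} = f - h$, and build the canonical extension $\W' = \V \oplus R$ from $\nu$. Define $\Phi \colon \W' \to \W$ by $\Phi(\alpha, c) = a(\alpha) + c \cdot h$. Since $h$ is flat and $a$ respects connections, $\Phi$ intertwines the connections; since $\Phi(\widetilde{\nu}, 1) = a(\widetilde{\nu}) + h = f \in \widetilde{\F}^{\geq k}\WW \subset \F^{\geq k}\W$, the generator of the extra piece of the filtration on $\W'$ maps into $\F^{\geq k}\W$, and the rest of the filtration is respected because $a$ does. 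As $\Phi$ is the identity on the sub and quotient of the exact sequence, the five-lemma forces it to be an isomorphism of extensions of VHS.

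The main obstacle will be bookkeeping the well-definedness of both maps on the quotient $\widetilde{\mathcal{J}}^{k}$ and on isomorphism classes of extensions. The ambiguity in $\widetilde{\nu}$ is by construction an element of $\widetilde{\F}^{\geq k}\WV \oplus \ker(\nabla_{\frac{d}{dq}}|_{\WV})$, which is exactly the subspace quotiented out in $\widetilde{\mathcal{J}}^{k}$; replacing $\widetilde{\nu}$ by $\widetilde{\nu} + \phi + \psi$ with $\phi \in \widetilde{\F}^{\geq k}\WV$ and $\psi \in \ker(\nabla_{\frac{d}{dq}}|_{\WV})$ induces an explicit isomorphism of the resulting extensions via $(\alpha, c) \mapsto (\alpha - c \psi, c)$ on $\V \oplus R$, adjusted so that filtrations and connections match. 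In the reverse direction, the independence of the extracted $\nu$ from the choices of $f$ and $h$ is exactly the content of the statement that $\widetilde{\nu}$ is defined modulo $\widetilde{\F}^{\geq k}\WV \oplus \ker(\nabla_{\frac{d}{dq}}|_{\WV})$, as noted in the proof of the first lemma. The verification is then a direct check, closely modelled on the classical intermediate Jacobian construction cited from \cite{Carl} and \cite{CV2}.
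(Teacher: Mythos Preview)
Your proposal is correct and follows the same approach as the paper: the proposition is stated immediately after the two preceding lemmas (one constructing a normal function from an extension, the other constructing an extension from a normal function), and the paper simply writes ``We have thus shown'' without explicitly verifying that the two constructions are mutually inverse. Your write-up supplies precisely that verification, so it is more detailed than the paper's own treatment but not a different route.
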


\subsection{Open Gromov-Witten invariants from the Fukaya category}
\label{section: OGW from Fuk}
Let $X$ be a Calabi-Yau variety. Let $L \subset X$ be a Lagrangian brane with $[L] = 0 \in H_n(X)$ and with vanishing Maslov class. Assumptions \ref{assumptions} don't hold in this situation, so the results in this section are conditional on a definition of the T-structure $QH^*(X,L)[[u]]$, and a proof of Conjecture \ref{con: relative cyclic OC respects connections}, for $(X,L)$. Assuming this has been done, we will show that the Fukaya category of $X$ determines the one-point open Gromov-Witten invariants of $L$. For a definition of open Gromov-Witten invariants see for example \cite[Definition~1.5]{ST1}.

Assume $\omega \in H^{1,1}(X)$ is a rational K\"ahler class, i.e. one in $H^2(X;\QQ)$. Then, there is a natural identification $R := \Lambda \cong \CC((q))$. In this section we restrict the extension of VHS \begin{equation}
	\label{eq: quantum homology extension of VHS}
	0 \rightarrow QH_*(X) \rightarrow QH_*(X,L) \rightarrow R \rightarrow 0
\end{equation}
to the interesting parity (even degrees if $Dim_\CC X$ is odd, and odd degree if $Dim_\CC X$ is even).

Recall that the hodge filtration on $QH^*(X;R)$ is so that $F^{\geq p}QH^*(X;R) = QH^{\leq n-2p}(X;R)$. The extension \eqref{eq: quantum homology extension of VHS} is thus such that $F^{\geq \frac{1}{2}}R = R$ and $F^{\geq \frac{3}{2}}R = 0$. The relative polarisation (see Definition \ref{defi: relative polarisation}) then takes the form:
\begin{equation}
	QH_{\geq n+1}(X;R) \otimes QH_*(X,L;R) \rightarrow R. 
\end{equation}

First we show:
\begin{nlemma}
	\label{lem: projective Calabi-Yau has holomorphic T-structure}
	For a Calabi-Yau variety, the extension of VHS \eqref{eq: quantum homology extension of VHS} is holomorphically flat in the sense of Definition \ref{defi: holomorphic extenstion}.
\end{nlemma}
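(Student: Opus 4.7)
The plan is to verify holomorphic flatness via Lemma~\ref{lem: holomorphic extension and flat section}: it suffices to show that the residue exact sequence at $q = 0$ splits as a sequence of $\CC$-vector spaces with endomorphisms. The strategy is to compute the residue of the connection on the dual cohomology extension
\[
0 \to R \to QH^{*}(X,L) \to QH^{*}(X) \to 0
\]
at $q = 0$ and show it vanishes identically; by duality the residue on the homological extension then also vanishes, giving the required splitting for free.

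First I would argue that the curvature $c$ of the bounding pair vanishes. Since $L$ has vanishing Maslov class in a Calabi-Yau, every disc class $\beta$ satisfies $\mu(\beta) = 0$, so $R = \Lambda$ is concentrated in $\ZZ$-degree $0$. The degree property (Proposition~\ref{degree property}) forces $|c| = 2$, hence $c = 0$.

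Next, using the chain-level formula for the relative connection,
\[
\nabla_{q\partial_{q}}^{L}(f,\eta) = \left(q\partial_{q}(f) + q\partial_{q}(c)\,f + q\partial_{q}(\qb_{-1,1})(\eta),\; \nabla_{q\partial_{q}}\eta\right),
\]
I would check that every term vanishes at $q = 0$ on the natural fibre basis $(f,\eta) \in \CC \oplus H^{*}(X;\CC)$: the term $q\partial_{q}(f)$ vanishes since $f$ is constant; $q\partial_{q}(c)$ vanishes since $c = 0$; the term $q\partial_{q}(\qb_{-1,1})|_{q=0}$ vanishes because $\q_{-1,1}^{\beta_{0}} = 0$ by the special case convention while every other summand carries a factor $Q^{\omega(\beta)}$ with $\omega(\beta) > 0$, which is killed by $q\partial_{q}(\cdot)|_{q=0}$; and $\nabla_{q\partial_{q}}\eta|_{q=0}$ vanishes because the residue of the Dubrovin connection at the large volume limit is trivial for Calabi-Yau targets (being governed by $c_{1}(X) = 0$). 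Hence the cohomological residue endomorphism is identically zero, and by duality so is the homological residue.

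With every endomorphism in the residue exact sequence equal to zero, any $\CC$-linear splitting of the short exact sequence of fibres at $q = 0$ automatically splits it as vector spaces with endomorphisms, and Lemma~\ref{lem: holomorphic extension and flat section} then produces the required flat element $h \in \WW$ with $b(h) = 1$. The place requiring more care, rather than the calculation itself, is identifying the correct Deligne lattice: one needs the extension to be regular singular, which follows from regular singularity of the Dubrovin connection at the large volume limit together with the observation that $\qb_{-1,1}$ begins in strictly positive powers of the Novikov parameter.
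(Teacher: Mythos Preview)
Your argument has a genuine gap at the crucial step: the claim that ``the residue of the Dubrovin connection at the large volume limit is trivial for Calabi--Yau targets (being governed by $c_{1}(X)=0$)'' is false. You are confusing the $u$-direction connection, whose pole part involves $c_{1}\star$, with the connection in the K\"ahler direction. The connection $\nabla_{q\partial_{q}}$ on $QH^{*}(X)$ involves $\omega\star$, and at $q=0$ the quantum product degenerates to the cup product. Hence the residue of $\nabla^{L}_{q\partial_{q}}$ on $QH^{*}(X,L)$ is
\[
(f,\eta)\;\longmapsto\;(0,\;\omega\cup\eta),
\]
which is nilpotent but certainly not zero. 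Consequently the residue endomorphism $N$ on $\WV_{0}=H^{*}(X;\CC)$ is $\omega\cup\,$, and splitting the residue sequence \eqref{eq:residue exact sequence} is \emph{not} automatic: one must exhibit a functional $F:H^{*}(C(\mathfrak{i}))\to\CC$ that lifts the projection and annihilates the image of $\omega\cup\,$.

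This is exactly where the real content of the lemma lies. The paper produces such a splitting geometrically: any chain $C\in H_{n+1}(X,L)$ with $\partial C=[L]$ gives a functional $F_{C}(f,\eta)=\int_{C}\eta-f$, and $F_{C}$ kills the image of the residue precisely when $\int_{C}\omega\cup\eta=0$ for all $\eta\in H^{n-1}(X)$. The existence (and uniqueness) of such a $C$ is established in Lemma~\ref{lem: unique bounding cycle} using the hard Lefschetz isomorphism $\omega\cup:H^{n-1}(X)\xrightarrow{\sim}H^{n+1}(X)$. Your proposal bypasses this step entirely, so as written it does not prove the lemma.
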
 
\begin{proof}
	We will prove the dual statement of the lemma. Observe that the residue of the connection $$\nabla^L_{q\frac{d}{dq}}:QH^*(X,L) \rightarrow QH^*(X,L)$$ is given by $(f,\eta) \mapsto \omega \cup (f,\eta) = (0, \omega \cup \eta)$. We thus need to show that there exists a map $F: H^*(C(\mathfrak{i})) \rightarrow \CC$ such that $F(\omega \cup (f,\eta)) = 0$ for all $(f,\eta) \in H^*(C(\mathfrak{i}))$. Given $C \in H_{n+1}(X,L)$ with $\partial C = [L]$, one obtains a map \begin{align}
		F_C: H^*(C(\mathfrak{i})) &\rightarrow \CC\\
		(f,\eta) &\mapsto \int_C \eta - f.
	\end{align} 
	A short verification shows this map is indeed well defined on the relative cohomology. We then have: \begin{equation}
		F_C (\omega \cup (f,\eta)) = F_C (0,\omega \cup \eta) = \int_C \omega \cup \eta. 
	\end{equation}
	As $H^{n-1}(C(\mathfrak{i})) \cong H^{n-1}(X)$, the result then follows from the following lemma.
\end{proof}
\begin{nlemma}
	\label{lem: unique bounding cycle}
	Let $L \subset X$ be a null-homologous Lagrangian in a symplectic manifold for which the map \begin{equation*}
		[\omega] \cup: H^{n-1}(X;\CC) \rightarrow H^{n+1}(X;\CC)
	\end{equation*} 
	is an isomorphism. Then, there exists a unique $C \in H_{n+1}(X,L)$ with $\partial C = [L]$ such that \begin{equation*}
		\int_C \omega \cup \eta = 0 \text{ for all } \eta \in H^{n-1}(X).
	\end{equation*}
\end{nlemma}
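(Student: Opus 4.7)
The plan is to reduce the problem to an application of the long exact sequence of the pair $(X,L)$ together with Poincar\'e duality and the hypothesised Lefschetz isomorphism.

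First I would use the relevant portion of the long exact sequence,
\begin{equation*}
H_{n+1}(X;\CC) \xrightarrow{\iota} H_{n+1}(X,L;\CC) \xrightarrow{\partial} H_n(L;\CC) \to H_n(X;\CC),
\end{equation*}
together with the hypothesis $[L] = 0 \in H_n(X;\CC)$, to produce some class $C_0 \in H_{n+1}(X,L;\CC)$ with $\partial C_0 = [L]$. Exactness tells us that any other bounding class is of the form $C_0 + \iota(A)$ for a unique $A \in H_{n+1}(X;\CC) / \ker \iota$, so the lemma reduces to showing that the affine condition $\int_{C_0 + \iota(A)} \omega \cup \eta = 0$ for all $\eta \in H^{n-1}(X;\CC)$ has a unique solution $A \in H_{n+1}(X;\CC)/\ker\iota$.

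Next I would interpret the map $A \mapsto \bigl(\eta \mapsto \int_A \omega \cup \eta\bigr)$ as a composition of three isomorphisms. Assuming $X$ is compact, Poincar\'e duality gives $PD: H_{n+1}(X;\CC) \xrightarrow{\sim} H^{n-1}(X;\CC)$ and $PD: H^{n+1}(X;\CC) \xrightarrow{\sim} H^{n-1}(X;\CC)^\vee$, and the hypothesis provides the middle iso $[\omega]\cup: H^{n-1}(X;\CC) \xrightarrow{\sim} H^{n+1}(X;\CC)$. The composed map $\Phi: H_{n+1}(X;\CC) \to H^{n-1}(X;\CC)^\vee$, which sends $A$ to the functional $\eta \mapsto \int_X PD(A) \wedge \omega \wedge \eta = \int_A \omega \cup \eta$, is therefore an isomorphism. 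In particular $\ker\iota$ (if non-trivial) is killed by $\Phi$, which combined with injectivity of $\Phi$ forces $\ker\iota = 0$, so $\iota$ is injective and the descent to $H_{n+1}(X;\CC)/\ker\iota$ is unnecessary. Unique existence of the desired $A$ satisfying $\Phi(A) = -\bigl(\eta \mapsto \int_{C_0} \omega \cup \eta\bigr)$ then follows, and $C := C_0 + \iota(A)$ is the unique class sought.

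The only potentially subtle point is making sure the pairing $\int_C \omega \cup \eta$ is well-defined on $H_{n+1}(X,L;\CC)$: an affine change $C \mapsto C + \iota(A)$ with $A \in H_{n+1}(X;\CC)$ shifts the value by $\int_X PD(A) \wedge \omega \wedge \eta$ which is well-defined, while the fact that $\omega \cup \eta$ pulls back to zero on $L$ for dimensional reasons (it is a form of degree $n+1$ on an $n$-manifold) shows the pairing descends from relative cochains to relative cohomology. Apart from this bookkeeping, the argument rests entirely on Poincar\'e duality plus the Lefschetz hypothesis, so there is no serious obstacle.
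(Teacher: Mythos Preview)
Your proposal is correct and follows essentially the same route as the paper: both arguments reduce to the fact that $A \mapsto \bigl(\eta \mapsto \int_A \omega\cup\eta\bigr)$ is an isomorphism $H_{n+1}(X;\CC) \to H^{n-1}(X;\CC)^\vee$, obtained as Poincar\'e duality composed with the Lefschetz hypothesis. The paper packages this slightly differently---it builds a map $H_{n+1}(X,L) \to H^{n-1}(X)$, $C\mapsto\psi_C$, and identifies its one-dimensional kernel rather than fixing a base point $C_0$ and correcting---and for well-definedness it invokes $\omega|_L = 0$; note that you also need this Lagrangian condition, not just the degree argument, to see that the pairing is independent of the \emph{form} representing $\eta$ (your dimension count only handles independence of the \emph{chain} representing $C$, since a change $\eta\mapsto\eta+d\eta'$ produces the degree-$n$ boundary term $\int_L \omega\cup\eta'$).
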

\begin{proof}
	First observe that the map \begin{equation}
		\int_C \omega \cup \_ :H^{n-1}(X) \rightarrow \CC
	\end{equation} is well defined as: \begin{equation}
		\int_C \omega \cup d\eta = \int_C d(\omega \cup \eta) = \int_L \omega \cup \eta = 0.
	\end{equation}
	The last equality follows as $\omega|_{L} = 0$. 
	
	By Poincar\'e duality, we can represent the map $\int_C \omega \cup \_$ by the map $\int_X \phi_C \cup \_$ for some element $\phi_C \in H^{n+1}(X)$. By assumption, we have $\phi_C = \omega \cup \psi_C$ for some $\psi_C \in H^{n-1}(X)$. We thus have: \begin{equation}
		\int_C \omega \cup \eta = \int_X \omega \cup \psi_C \cup \eta \text{ for all } \eta \in H^{n-1}(X).
	\end{equation}
	This defines a map \begin{align}
		H_{n+1}(X,L) &\rightarrow H^{n-1}(X)\\
		C &\mapsto \psi_C.
	\end{align}
	The composition $H_{n+1}(X) \rightarrow H_{n+1}(X,L) \rightarrow H^{n-1}(X)$ is the Poincar\'e duality isomorphism. The map $C \mapsto \psi_C$ thus has one-dimensional kernel. The required $C$ is the unique generator of the kernel with $\partial C = [L]$.
\end{proof}
\begin{remark}
	The cohomological condition is a weakening of the hard Lefschetz property, and thus holds for any K\"ahler manifold. In particular for Calabi-Yau varieties.
\end{remark}

We now construct a useful model for $QH_*(X,L;R)$. To this end, consider the map \begin{align*}
	\mathfrak{j}: \RR  &\rightarrow C_*(X;\RR)\\
	1 &\mapsto [L]
\end{align*}
Let $Cone(\mathfrak{j}) := \RR \oplus C_*(X;\RR)$ be the cone over $\mathfrak{j}$. As a vector space we can represent $QH_*(X,L)$ by the homology $H_*(Cone(\mathfrak{j})) \otimes_{\RR} R$ as follows. The pairing \begin{align*}
	Cone(\mathfrak{j}) \otimes C(\mathfrak{i}) \rightarrow R\\
	(a,C) \otimes (f,\eta) \mapsto  af - \int_C \eta
\end{align*}
descends to homology, and is seen to be non-degenerate. Thus, we get an isomorphism of vector spaces $QH_*(X,L) \cong H_*(Cone(\mathfrak{j})) \otimes R$. 

Ganatra-Perutz-Sheridan show:

\begin{nlemma}[{\cite[Lemma~3.4]{GPS}}]
	\label{lem: GPS lemma}
	The data of the VHS $QH^*(X;\Lambda)$ determines the degree grading on $QH^*(X;R)$, the constants $H^*(X;\CC) \subset QH^*(X;R)$, the holomorphic volume form $\Omega = 1 \in QH^*(X;R)$ and the matrix $A(q) \in End(QH^*(X;\Lambda))$ for the quantum multiplication $\omega \star$.
\end{nlemma}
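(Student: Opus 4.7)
The plan is to extract each piece of data from the abstract VHS $\V = QH^*(X;\Lambda)$, i.e.\ from the underlying $R$-module together with connection $\nabla$, Hodge filtration $\F^\bullet$, and polarisation $(\cdot,\cdot)_\V$. The cohomological $\ZZ$-grading on $QH^*(X;R)$ is read off from the associated graded of the filtration: since $\F^{\geq p}\V = QH^{\leq n-2p}(X;R)$, one has $QH^{n-2p}(X;R) \cong \F^{\geq p}\V / \F^{\geq p+1}\V$ canonically. Since $X$ is Calabi-Yau, $\F^{\geq n/2}\V$ is a rank-one free $R$-module, and I take $\Omega$ to be its distinguished generator, normalized against the generator of the dual graded piece $\F^{\geq -n/2}\V / \F^{\geq -n/2+1}\V$ using the polarisation $(\cdot,\cdot)_\V$.

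To recover the inclusion $H^*(X;\CC) \subset QH^*(X;R)$, I appeal to the Deligne canonical extension $\WV \subset \V$ over $\WR = \CC[[q]]$, which is well-defined because the quantum connection $\nabla_{q \frac{d}{dq}}$ has a regular singularity at $q=0$ with nilpotent residue. The fibre $\WV_0 := \WV/q\WV$ carries a limit mixed Hodge structure in the sense of Schmid; its Deligne bigrading yields a canonical splitting of $\WV_0$ compatible with the limiting Hodge filtration, and extending this splitting parallelly along $\WV$ via the residual connection produces a copy of $H^*(X;\CC)$ sitting inside $QH^*(X;R)$ as the desired subspace of constants.

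With the constants serving as a trivializing frame for $\WV$, the connection writes as $\nabla_{q \frac{d}{dq}} = q \partial_q + A(q)$ for some $A(q) \in \mathrm{End}(H^*(X;\CC)) \otimes_\CC R$ with $A(0) = \omega \cup$, and this is precisely the matrix for quantum multiplication $\omega \star$.

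The principal obstacle is the canonical splitting producing the constants, which requires the Deligne bigrading of the limit MHS, or equivalently an $\mathfrak{sl}_2$-triple attached to the nilpotent operator $\omega\cup$ acting on $H^*(X;\CC)$. For smooth projective Calabi-Yau varieties such a triple is provided by the hard Lefschetz theorem applied to a rational Kähler class, which is why the rationality assumption on $\omega$ was imposed at the start of the section.
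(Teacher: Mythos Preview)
The paper does not supply its own proof of this lemma: it is quoted directly from \cite[Lemma~3.4]{GPS}, so there is no in-paper argument to compare your proposal against. Your sketch is in the right spirit and assembles the correct ingredients (Deligne lattice, nilpotent residue, limit mixed Hodge structure, the $\mathfrak{sl}_2$-triple from hard Lefschetz), and your recovery of $\Omega$ and of $A(q)$ once the constants are in hand is fine.

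Two points deserve tightening. First, reading off the grading from the associated graded $\F^{\geq p}/\F^{\geq p+1}$ only gives you the graded pieces as abstract $R$-modules; it does not yet give the direct-sum decomposition of $\V$ itself. That splitting is precisely the datum of the constants $H^*(X;\CC)\subset QH^*(X;R)$, so steps one and three are really the same problem and should be treated together.

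Second, and more substantively, the phrase ``extending this splitting parallelly along $\WV$ via the residual connection'' is the weak link. The constant sections $H^*(X;\CC)\subset QH^*(X;\WR)$ are \emph{not} flat for $\nabla_{q\partial_q}$ (nor for any natural regularisation of it), so parallel transport from the fibre at $0$ does not produce them; parallel transport instead produces the Levelt frame, in which the connection matrix is the constant nilpotent $N$, not the $q$-dependent $\omega\star$. What actually singles out the constants is the interplay between the monodromy weight filtration (built from $N=\omega\cup$, and this is where your $\mathfrak{sl}_2$ enters) and the Hodge filtration over $\WR$: one needs that these two filtrations are opposite, which is exactly the Hodge--Tate hypothesis under which \cite{GPS} operate. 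You gesture at this in your final paragraph, but the body of the argument should use it directly rather than invoking ``parallel extension''. As written, the proposal has a genuine gap at this step.
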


By Lemmas \ref{lem: projective Calabi-Yau has holomorphic T-structure} and \ref{lem: holomorphic extension and flat section} we obtain an element $h \in QH_*(X,L)$ with $\nabla^{L,\vee}(h) = 0$, and $h \mapsto 1 \in \WR$. Note that $h$ is well-defined up to addition of elements in $Ker(\nabla^\vee_{\frac{d}{dq}}): QH_*(X;\WR) \rightarrow QH_*(X;\WR)$. Such elements are flat sections without monodromy. \begin{nlemma}
	Let $g \in Ker(\nabla_{\frac{d}{dq}}): QH^*(X;\WR) \rightarrow QH^*(X;\WR)$. Let $\eta \in H^*(X;\CC)$ be the lowest degree term appearing in $g$ (which is necessarily constant by the flatness condition). Then $\eta \in H^{\geq n+1}(X;\CC)$.
\end{nlemma}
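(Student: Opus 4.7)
The plan is to combine a flatness analysis (cohomological degree by cohomological degree) with Hard Lefschetz.

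First, I would write $g = \sum_k g^{(k)}$ with $g^{(k)} \in H^k(X;\CC) \otimes_\CC \WR$, and let $k_0$ be the smallest index with $g^{(k_0)} \neq 0$, so that $\eta = g^{(k_0)}$. The Dubrovin connection on $QH^*(X;R)$ has the form $\nabla_{q\,d/dq} = q\,d/dq + (\omega\star)$, where $\omega\star = \omega\cup + q(\cdots)$. The Calabi--Yau assumption $c_1(X)=0$ (together with the Gromov--Witten dimension axiom) forces $q$ to carry cohomological degree $0$, so that $\omega\star$ raises cohomological degree by exactly $2$. Projecting the flatness equation $q\,g' + (\omega\star)g = 0$ onto cohomological degree $k_0$, the only possible contribution is from $q(g^{(k_0)})'$, giving $q\,\eta' = 0$ and hence $\eta$ is constant in $q$. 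Projecting onto cohomological degree $k_0+2$ and extracting the $q=0$ coefficient then yields $\omega \cup \eta = 0$ in $H^{k_0+2}(X;\CC)$.

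Next, I would invoke Hard Lefschetz, which applies because $X$ is K\"ahler with $[\omega]$ a K\"ahler class: the map $\omega^{n-k}\cup \colon H^k(X;\CC) \to H^{2n-k}(X;\CC)$ is an isomorphism for all $k \leq n$, and in particular $\omega\cup \colon H^k(X;\CC) \to H^{k+2}(X;\CC)$ is injective for all $k \leq n-1$. Applied to $\eta \neq 0$ satisfying $\omega\cup\eta = 0$, this forces $k_0 \geq n$.

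Finally, since the extension of VHS has been restricted to the parity opposite to that of $n$, the integer $k_0$ has parity opposite to $n$, ruling out $k_0 = n$. Combined with $k_0 \geq n$, this gives $k_0 \geq n+1$, i.e.\ $\eta \in H^{\geq n+1}(X;\CC)$. I do not foresee a genuine obstacle; the only delicate point is the degree bookkeeping, namely that $\omega\star$ acts homogeneously of degree $+2$ on $QH^*(X;\WR)$, which follows directly from $c_1(X) = 0$ and the dimension axiom.
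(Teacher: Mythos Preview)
Your argument is correct and essentially identical to the paper's own proof. Both decompose $g$ by cohomological degree, use the Calabi--Yau condition to ensure $\omega\star$ raises cohomological degree by exactly $2$, read off from the degree $k_0+2$ component of the flatness equation that $\omega\cup\eta=0$ (the paper phrases this as $\omega\star\eta = Q\,d\zeta/dQ$ having no constant term, which is the same observation), and then apply Hard Lefschetz together with the parity restriction to conclude $k_0 \geq n+1$.
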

\begin{proof}
	Suppose $\eta \in H^{p}(X;\CC)$. Let $\zeta \in H^{p+2}(X;\WR)$ be degree $p+2$ part of $g$. Flatness of $g$ implies that \begin{equation}
		\omega \star \eta = Q \frac{d \zeta}{dQ}.
	\end{equation}
	Thus, if $\omega \star \eta$ has a constant term, this has no solutions for $\zeta \in H^{p+2}(X;\WR)$. Hence, the constant term, which is $\omega \cup \eta$, must vanish. We then apply the hard Lefschetz theorem to find that any class $\eta$ with $\omega \cup \eta = 0$ must be of degree at least $n$. But, we have restricted to the cohomology of parity opposite to the dimension, so the lowest degree is at least $n+1$. 
\end{proof}
Thus, for degree reasons, any choice of $h$ must be represented by \begin{equation}(1,C) + \sum_{i \leq n-1} (0,f_i) \in H_*(Cone(\mathfrak{j}))\otimes \widetilde{R} \text{ with } f_i \in H_{i}(X;\WR),
\end{equation}
where $C$ is the cycle bounding $L$ constructed in Lemma \ref{lem: unique bounding cycle}. The previous lemma shows that $f_{n-1}$ is independent of the choice of $h$, up to a constant element, that is, one in $H_{n-1}(X;\CC)$. By Lemma \ref{lem: GPS lemma}, the VHS $QH_*(X)$ determines the constant elements $QH_{n+1}(X;\CC)$. We have thus shown: \begin{nlemma}
	The extension of VHS \eqref{eq: quantum homology extension of VHS} determines the map\begin{equation}
		F = \langle \_, (0,f_{n-1}) \rangle_{QH_*(X,L)}: QH_{n+1}(X;\CC) \rightarrow \WR/\CC.
	\end{equation}
\end{nlemma}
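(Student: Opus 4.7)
The plan is to extract the map $F$ from the canonical flat lift of $1 \in \WR$ provided by the holomorphic flatness of the extension (Lemma \ref{lem: projective Calabi-Yau has holomorphic T-structure}), and then to check that the residual ambiguity in this lift matches the target quotient $\WR/\CC$ exactly.

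First I would apply Lemmas \ref{lem: projective Calabi-Yau has holomorphic T-structure} and \ref{lem: holomorphic extension and flat section} to obtain an element $h \in QH_*(X,L;\WR)$ satisfying $b(h) = 1 \in \WR$ and $\nabla^{L,\vee}_{q\frac{d}{dq}} h = 0$. The extension determines $h$ uniquely modulo $\ker(\nabla^{\vee}_{q\frac{d}{dq}}: QH_*(X;\WR) \to QH_*(X;\WR))$. Using the cone model $H_*(Cone(\mathfrak{j}))\otimes \WR$, I would expand $h = (1,C) + \sum_{i\leq n-1}(0,f_i)$. As explained in the paragraph preceding the statement, $C$ is forced to equal the canonical bounding cycle from Lemma \ref{lem: unique bounding cycle}. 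Dualising the degree-bound lemma immediately preceding the statement constrains any flat section of $QH_*(X;\WR)$ to have top homological degree at most $n-1$, with leading term a constant in $H_{n-1}(X;\CC)$. Consequently $f_{n-1}$ is canonically determined by the extension up to an additive constant in $H_{n-1}(X;\CC)$.

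Finally I would interpret $f_{n-1}$ through the pairing. The polarisation on $QH_*(X)$ is part of the absolute VHS data already encoded in the extension, and since $(0,f_{n-1})$ lies in the image of $QH_*(X)\hookrightarrow QH_*(X,L)$ the relative pairing reduces to the absolute Poincar\'e pairing on $QH_*(X)$. Pairing $\eta \in QH_{n+1}(X;\CC)$ against the constant ambiguity $c\in H_{n-1}(X;\CC)$ produces $\int_X \eta \cup \textup{PD}(c)\in \CC$, so the ambiguity in $F(\eta) = \langle \eta, (0,f_{n-1})\rangle$ lies exactly in $\CC$. Passing to $\WR/\CC$ therefore yields a well-defined map, determined entirely by the extension. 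The only point requiring care is matching the ambiguity in $f_{n-1}$ precisely against the $\CC$ in the target quotient; this is handled by the degree constraint on flat sections of $QH_*(X;\WR)$, and beyond this the argument is essentially organisational bookkeeping.
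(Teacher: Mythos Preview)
Your proposal is correct and follows essentially the same route as the paper: the paper's proof is the paragraph of text immediately preceding the lemma (culminating in ``We have thus shown:''), which extracts $h$ from holomorphic flatness, expands it in the cone model, uses the degree bound on flat sections to pin down $f_{n-1}$ up to a constant in $H_{n-1}(X;\CC)$, and then observes that pairing against $QH_{n+1}(X;\CC)$ sends this ambiguity into $\CC$. The only point you leave implicit is that the domain $QH_{n+1}(X;\CC)\subset QH_*(X;R)$ is itself determined by the VHS data --- the paper invokes Lemma~\ref{lem: GPS lemma} for this --- but your remark that the polarisation is part of the absolute VHS data essentially covers it.
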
 
We will now show this map is exactly the open Gromov-Witten invariant with 0 boundary marked points and 1 interior marked point.

\begin{nlemma}
	The map $F \circ PD: H^{n-1}(X;\CC) \rightarrow \WR/\CC$ is the map \begin{equation}
		OGW_{\star,0}(\_) = \sum_{d \in H_2(X,L)} q^{\omega(d)} OGW_{d,0}(\_): H^{n-1}(X;\CC) \rightarrow \WR/\CC
	\end{equation}
\end{nlemma}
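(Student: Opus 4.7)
The plan is to use the flatness of $h$ to compute the pairing $\int_{f_{n-1}}\widetilde{\eta}$ explicitly in terms of the operator $\mathfrak{q}^{b,\gamma}_{-1,1}$, and then identify the result with the Solomon-Tukachinsky definition of open Gromov-Witten invariants. First I would unfold the definition of $F\circ PD$: in the $Cone(\mathfrak{j})$ model, $(0,f_{n-1})\in Cone(\mathfrak{j})$ lies in the image of $QH_*(X;\WR)\to QH_*(X,L;\WR)$, so the relative polarisation of Definition~\ref{defi: relative polarisation} collapses to the Poincar\'e pairing on $X$, yielding
\begin{equation*}
  F(PD(\widetilde{\eta})) = \int_{f_{n-1}}\widetilde{\eta} \pmod{\CC},
\end{equation*}
with the $\CC$-ambiguity accounting for the freedom in choosing $f_{n-1}$ modulo a constant $(n-1)$-cycle.

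Next I would turn the flatness of $h$ with respect to $\nabla^{L,\vee}_{q\partial_q}$ into an ODE for $\int_{f_{n-1}}\eta$, where $\eta\in A^{n-1}(X)$ is a harmonic representative of $\widetilde{\eta}$. Pair the flatness relation with the test element $(0,\eta)\in QH^*(X,L;R)$: the LHS $q\partial_q\langle(0,\eta),h\rangle$ reduces to $-q\partial_q\!\int_{f_{n-1}}\eta$ because $\int_C\eta$ is constant in $q$ and no other component of $h$ has a matching dimension. On the RHS, plug in the explicit formula
\begin{equation*}
  \nabla^L_{q\partial_q}(0,\eta) = \bigl(u^{-1}q\partial_q(\mathfrak{q}^{b,\gamma}_{-1,1})(\eta),\, \nabla_{q\partial_q}\eta\bigr),
\end{equation*}
and use the unique-bounding-cycle property of $C$ (Lemma~\ref{lem: unique bounding cycle}) to kill the classical contribution $\int_C\omega\cup\eta$. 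The surviving terms, after applying Lemma~\ref{le:v of q -1} to rewrite $q\partial_q(\mathfrak{q}^{b,\gamma}_{-1,1})(\eta)$ in terms of $\mathfrak{q}^{b,\gamma}_{0,1}(\eta)$-type operators, should match the quantum corrections to $\omega\star\eta$ paired against $C$ and $\widetilde{h}$, collapsing the full RHS to $q\partial_q(\mathfrak{q}^{b,\gamma}_{-1,1})(\eta)$ and yielding the ODE $q\partial_q\!\int_{f_{n-1}}\eta = q\partial_q(\mathfrak{q}^{b,\gamma}_{-1,1})(\eta)$.

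Integrating in $q$, and absorbing the constant of integration into the $\CC$-ambiguity of $f_{n-1}$, gives $\int_{f_{n-1}}\eta\equiv\mathfrak{q}^{b,\gamma}_{-1,1}(\eta)\pmod{\CC}$. Reading off the definition of $\mathfrak{q}^{b,\gamma}_{-1,1}$, one then recognises the Solomon-Tukachinsky expression for the bulk/bounding-cochain deformed invariant $\sum_{d\in H_2(X,L)}q^{\omega(d)}OGW_{d,0}(\widetilde{\eta})$: the $\mathfrak{q}_{-1,1+t}$ terms count disks with interior insertions $\eta,\gamma,\dots,\gamma$, and the $\langle \mathfrak{q}_{k,1+t}(b^{\otimes k};\eta,\gamma^{\otimes t}),b\rangle_L$ terms are the bounding-cochain corrections. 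The main obstacle is the middle paragraph: rigorously matching the higher-order quantum corrections in $\omega\star\eta$ (paired against $C$ and $\widetilde{h}$) against $q\partial_q(\mathfrak{q}^{b,\gamma}_{-1,1})(\eta)$, which requires using the divisor axiom (Proposition~\ref{divisor property}) together with the $A_\infty$-axioms underlying the $\mathfrak{q}$-operations, as well as carefully treating the $u$-dependence to extract the VHS (i.e.\ $u$-free) content.
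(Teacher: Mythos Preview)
Your overall strategy matches the paper's: pair the flatness of $h$ against test elements $(0,\eta)$, derive an ODE for the coefficients of $f_{n-1}$, integrate, and identify the result with open Gromov--Witten invariants. However, the execution in the middle paragraph contains a genuine error, and the final identification is also wrong; by accident the two errors compensate, so you arrive at the correct statement, but the argument as written does not prove it.

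The claimed ``collapsing'' of the right-hand side to $q\partial_q(\qb_{-1,1})(\eta)$ via Lemma~\ref{le:v of q -1} does not work. That lemma rewrites $v(\qb_{-1,1})(\eta)$ as open disk counts with an extra bulk or bounding-cochain insertion; these are not the same species as the closed sphere term $\int_C \q_{\emptyset,2}(\omega,\eta)$ you are trying to cancel, and there is no mechanism for them to match. The correct ODE retains this closed term:
\[
q\partial_q g_i \;=\; \q_{-1,2}(\omega,\eta^i) \;-\; \int_C \q_{\emptyset,2}(\omega,\eta^i).
\]
The paper does not try to cancel this; it simply recognises the right-hand side as $OGW_{*,0}(\omega,\eta^i)$ by the Solomon--Tukachinsky definition, and then applies the divisor axiom for $OGW$ (not for $\q_{-1}$ alone) to integrate.

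This exposes the second error: $\qb_{-1,1}(\eta)$ is \emph{not} the Solomon--Tukachinsky open Gromov--Witten invariant. For a null-homologous Lagrangian, $OGW_{d,0}$ depends on the bounding chain $C$ with $\partial C = L$ and carries a closed-sphere correction of the form $-\int_C \q_{\emptyset}(\dots)$; your final paragraph accounts only for the disk counts and bounding-cochain corrections, not this bounding-cycle term. The paper's route avoids this pitfall by never attempting the identification $\qb_{-1,1}=OGW$ at the one-point level.

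As a secondary comment, the middle paragraph is substantially overcomplicated. For $\eta\in H^{n-1}$, the only pieces of $h=(1,C)+\sum_{i\leq n-1}(0,f_i)$ pairing nontrivially with $\nabla^L_{q\partial_q}(0,\eta)$ are $(1,C)$ against both components; the components $(0,f_i)$ with $i\leq n-1$ do not pair with $\omega\star\eta\in H^{n+1}$ at all. The invocation of Lemma~\ref{le:v of q -1}, the reference to a putative $\widetilde{h}$, and the discussion of ``$u$-dependence'' are all unnecessary --- the paper's computation is one line once you write out the pairing.
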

\begin{proof}
	Let $\eta_1, \dots, \eta_N \in H_{n-1}(X;\CC)$ be a basis. Let $\eta^1, \dots, \eta^N \in H^{n-1}(X;\CC)$ be the dual basis. Write $f_{n-1} = \sum^N_{i=1} g_i\eta_i$ for some functions $g_i \in \WR$, which are well-defined up to a constant. The condition $\nabla^{L,\vee}(h) = 0$ states that for any $(f,\eta) \in QH^*(X,L)$, we have \begin{equation}
		\label{eq: condition on h 3}
		h(\nabla^L_{q\frac{d}{dq}}(f,\eta)) = q\frac{d}{dq}h(f,\eta). 
	\end{equation}
	Apply this equation to $(f,\eta) = (0, \eta^i)$ to find: \begin{equation}
		q\frac{dg_i}{dq} = \q_{-1,2}(\omega, \eta^i) - \int_C \q_{\emptyset,2}(\omega, \eta^i) = OGW_{*,0}(\omega,\eta^i),
	\end{equation}
	where the latter equality holds by definition of $OGW$ in this situation. Thus, applying the divisor axiom, we find that $g_i = OGW_{*,0}(\eta^i)$. For $PD(\eta^i) \in H_{n+1}(X;\CC)$, we thus have: \begin{equation}
		\langle PD(\eta^i), h \rangle_{QH_*(X,L)} = OGW_{*,0}(\eta^i)
	\end{equation}
	as required.
\end{proof}
As the map $F$ is determined by the extension of VHS, it can be obtained from the Fukaya category. We have thus proved Theorem \ref{thm: OGW from Fuk quintic threefold}.
\begin{remark}
	In \cite{ST1}, open Gromov-Witten invariants take the form $OGW_{d,k}: H^*(C(\mathfrak{i}))^{\otimes l} \rightarrow \Lambda$. In our setup, the invariants we consider take inputs in degree $n-1$, for which there is a canonical isomorphism $H^{n-1}(C(\mathfrak{i})) \cong H^{n-1}(X)$. 
\end{remark}

\appendix

	\printbibliography
\end{document}